\theoremstyle{plain}
\newtheorem{thm}{Theorem}[section]
\newtheorem{cor}[thm]{Corollary}
\newtheorem{lem}[thm]{Lemma}
\newtheorem{prop}[thm]{Proposition}
\newtheorem*{thm*}{Theorem}
\newtheorem{thmx}{Theorem}
\theoremstyle{definition}
\newtheorem{rem}[thm]{Remark}
\newtheorem{defn}[thm]{Definition}
\newtheorem{ex}[thm]{Example}
\newcommand{\bbz}{\mathbb{Z}}
\newcommand{\XX}{\mathscr{X}}
\newcommand{\YY}{\mathscr{Y}}
\newcommand{\ZZ}{\mathscr{Z}}
\newcommand{\EE}{\mathscr{E}}
\newcommand{\FF}{\mathscr{F}}
\newcommand{\GG}{\mathscr{G}}
\newcommand{\PP}{\mathscr{P}}
\newcommand{\LL}{\mathscr{L}}
\newcommand{\Perf}{\text{Perf}}
\newcommand{\D}{\text{D}}
\newcommand{\Db}{\text{D}^b}
\newcommand{\Dq}{\text{D}_{\text{qc}}}
\newcommand{\Dbq}{\text{D}^b_{\text{qc}}}
\newcommand{\Dmc}{\text{D}^-_{\text{coh}}}
\newcommand{\Dbc}{\text{D}^b_{\text{coh}}}
\newcommand{\Dqch}{\text{D}_{\text{qc},\chi}}
\newcommand{\Dcch}{\text{D}_{\text{coh},\chi}}
\newcommand{\Dbcch}{\text{D}^b_{\text{coh},\chi}}
\newcommand{\Hom}{\text{Hom}}
\newcommand{\Ext}{\text{Ext}}
\newcommand{\Spec}{\text{Spec}}
\newcommand{\Gm}{\mathbb{G}_m}
\newcommand{\qc}{\text{qc}}
\newcommand{\Spl}{\mathcal{S}pl_{X/k}}
\newcommand{\spl}{\text{Spl}_{X/k}}
\newcommand{\Ssh}{\mathcal{SSH}_{X/k}}
\newcommand{\Sshx}{\mathcal{SSH}^{\xi}_{X/k}}
\newcommand{\ssh}{\text{SSH}_{X/k}}
\newcommand{\sshx}{\text{SSH}^{\xi}_{X/k}}
\newcommand{\sdx}{\text{sD}_{X/k}}
\newcommand{\Pic}{\text{Pic}}
\title{Semi-Homogeneous Sheaves and Twisted Derived Categories}
\author{Tyler Lane}
\begin{document}
\begin{abstract}
We produce twisted derived equivalences between torsors under abelian varieties and their moduli spaces of simple semi-homogeneous sheaves.  We also establish the natural converse to this result and show that a large class of twisted derived equivalences, including all derived equivalences, between torsors arise in this way.  As corollaries, we obtain partial extensions of the usual derived equivalence criterion for abelian varieties established by Orlov and Polishchuk.
\end{abstract}
\maketitle
\section*{Introduction}
With any derived equivalence $\Phi_{\EE}:\Db(Y) \xrightarrow{\sim} \Db(X)$ one can associate a morphism $i_{\EE}:Y \to \text{sD}_{X/k}$ from $Y$ to the moduli space of simple complexes on $X$ corresponding to the object $\EE \in \Db(Y \times X)$.  By \cite[Lemma 5.2]{LO15}, $i_{\EE}$ is an open immersion, so we may view $Y$ as a fine moduli space of complexes on $X$ and $\EE$ as the universal object on $Y \times X$.  Conversely, given a proper moduli space $M$ of complexes on $X$, one can ask whether or not the universal (twisted) object is the kernel of a (twisted) derived equivalence between $M$ and $X$.  In order for such an $M$ to be (twisted) derived equivalent to $X$ in this way, the objects parameterized by $M$ must satisfy certain strong conditions; for example, given any two objects $E_1, E_2 \in \Db(X)$ corresponding to distinct points of $M$, we must have
\begin{enumerate}
    \item $\text{dim}(\text{Ext}^i(E_1,E_1))=\binom{g}{i}$ for every integer $i \leq 1$, where $g=\text{dim}(X)$,
    \item $\text{dim}(\text{Ext}^i(E_1,E_2))=0$ for every $i \in \mathbb{Z}$.
\end{enumerate}

In general, it appears difficult to characterize these objects, especially those which satisfy the second condition.  However, when $X$ is a torsor under an abelian variety, these objects have been studied thoroughly in \cite{Mu78} and \cite{dJO22}: for example, it was shown in \cite{dJO22} that in characteristic $0$, any object $E \in \Db(X)$ satisfying $(1)$ is a \textit{simple semi-homogeneous sheaf} up to shift.  We show here that any two simple semi-homogeneous sheaves on $X$ with the same numerical invariants satisfy (1) and (2) and that the moduli space of such sheaves is a torsor under an abelian variety.  Now using the usual techniques, namely \Cref{A t: criterion}, we obtain the following.

\begin{thmx}[\Cref{2 t : equivalent to ssh}]\label{0 t: moduli}
Let $A$ be an abelian variety over a field $k$, and let $X$ be an $A$-torsor, let $M$ be a moduli space of simple semi-homogeneous sheaves on $X$.  Then there is a twisted derived equivalence
$$\Phi_{\EE}:\Db(M,\mu^{-1}) \to \Db(X),$$
where $\mu \in \text{Br}(M)$ is the universal obstruction and $\EE$ is the universal twisted sheaf on $M \times X$.
\end{thmx}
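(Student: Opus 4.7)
The plan is to apply the criterion \Cref{A t: criterion} to the universal twisted kernel $\EE$ on $M \times X$. Since the paper already establishes both the structure of $M$ as a torsor under an abelian variety and the Ext-computations (1)--(2) for simple semi-homogeneous sheaves, the proof should largely amount to packaging these inputs into the hypotheses of the criterion and turning the crank.

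First I would record the structural inputs: $M$ is smooth and proper of the same dimension $g$ as $X$, being a torsor under an abelian variety, and a universal object exists as a $\mu^{-1}$-twisted sheaf $\EE$ on $M \times X$, where $\mu \in \text{Br}(M)$ is the obstruction to the existence of an untwisted universal family. These features of the moduli problem are standard for moduli of (twisted) sheaves, and I would cite them directly from the earlier sections rather than reprove them. In particular, they guarantee that the integral transform $\Phi_{\EE} = \R(\pi_X)_* (\pi_M^*(-) \otimes^L \EE)$ is well-defined as a functor $\Db(M,\mu^{-1}) \to \Db(X)$: the $\mu^{-1}$-twist of the source cancels with the $\mu^{-1}$-twist of $\EE$ in the $M$-direction, leaving an honest, untwisted complex on $X$.

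Next I would verify the pointwise Ext-conditions that \Cref{A t: criterion} presumably demands. For any two closed points $m, m' \in M$ with corresponding simple semi-homogeneous sheaves $E_m, E_{m'}$ on $X$, we need
\[
\dim_k \Ext^i(E_m, E_m) = \binom{g}{i}, \qquad \Ext^i(E_m, E_{m'}) = 0 \ \text{for}\ m \neq m',\ i \in \bbz.
\]
These are exactly conditions (1) and (2) of the introduction, and the paper asserts that both hold for simple semi-homogeneous sheaves with matching numerical invariants (the self-Ext calculation mirroring $H^\bullet(A, \mathcal{O}_A)$, and the orthogonality being a genuine rigidity statement for non-isomorphic simples). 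After base-changing to $\bar{k}$ if the criterion is formulated over an algebraically closed field, these feed directly into its hypotheses.

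The main obstacle I expect is the twist-bookkeeping rather than the geometric content. One must verify that the cancellation of $\mu$ in the kernel really does produce a genuine equivalence onto the untwisted category $\Db(X)$, rather than landing in some shifted or residually twisted target, and one must check any properness, boundedness, or spanning-class hypotheses of the criterion in the twisted formalism. Provided \Cref{A t: criterion} is formulated with precisely this application in mind — which is presumably its raison d'être as an appendix result — the remaining argument should be essentially mechanical, with the substantive input being the structural and Ext-theoretic results the author has already developed for simple semi-homogeneous sheaves.
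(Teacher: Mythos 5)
Your approach is the same as the paper's: reduce to $\overline{k}$, apply \Cref{A t: criterion} to the universal twisted kernel on $\text{SSH}^\xi_{X/k} \times X$, feed in the Ext-computations for simple semi-homogeneous sheaves, and cite \Cref{1 p:torsor} for the structure of the moduli space. The orthogonality condition (your ``$\Ext^i(E_m,E_{m'})=0$'') is exactly what the paper's \Cref{2 l: Orlov's lemma} supplies, adapting Orlov's Lemma 4.8 to positive characteristic via Fourier--Mukai reduction to the vector bundle case.

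However, you have mischaracterized condition (1) of \Cref{A t: criterion} in a way that opens a real gap. The criterion does \emph{not} ask that $\dim_k\Ext^i(E_m,E_m)=\binom{g}{i}$; it asks that the map
$$\Phi_{\EE}\colon \Ext^i(k(m),k(m)) \longrightarrow \Ext^i(E_m,E_m)$$
induced by the functor be an \emph{isomorphism} for $i=0,1$. Matching dimensions (which you do get from $\dim T_m M = g = \dim \Ext^1(E_m,E_m)$, the latter by \Cref{1 p:ext}) does not by itself make this particular map an isomorphism --- a priori it could fail to be injective. The substance of the paper's argument at this step is deformation-theoretic: it invokes \cite[Proposition 6.3, 6.4]{HP24} to identify this map with a Kodaira--Spencer-type map and uses the universality of $\EE$ to show it factors through identity maps on lifting functors, forcing all arrows in the relevant diagram to be isomorphisms. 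This is not a mechanical bookkeeping step; it is where one actually uses that $\EE$ is the universal family rather than just some family with the right fibers. Separately, you leave implicit the upgrade from fully faithfulness to equivalence. The paper does this via Grothendieck duality (\Cref{A c: coherent twisted adjoints}): since both $M$ and $X$ have trivial canonical bundle in degree $-g$, $\Phi_\EE$ commutes with Serre functors, so fully faithful implies equivalence by \cite[Corollary 1.56]{Huy06}. This should be stated explicitly rather than folded into ``mechanical.''
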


This result is an extension of work of Gulbrandsen and Mukai (see \cite{G08} and \cite{Mu98}).  When $k$ is algebraically closed and of characteristic $0$ (resp.~~ equal to $\mathbb{C}$), moduli spaces of simple semi-homogeneous vector bundles (resp.~ sheaves) were constructed in \cite{G08} (resp.~ \cite{Mu98}).  They also showed that when they exist, universal sheaves are the kernels of derived equivalences as in \Cref{0 t: moduli} (see \cite[Proposition 4.1]{G08} and \cite[Proposition 4.11]{Mu98}).  Our construction of the moduli spaces of simple semi-homogeneous sheaves is heavily inspired by those of \cite{G08} and \cite{Mu98}.

As an application of \Cref{0 t: moduli}, we can recover some known equivalences.  Line bundles serve as important examples of simple semi-homogeneous sheaves, and for any $\lambda \in \text{NS}_{X/k}(k)$, $\text{Pic}^\lambda_{X/k}$ is a moduli space of simple semi-homogeneous sheaves on $X$.  Using \Cref{0 t: moduli} one obtains twisted derived equivalences $\Db(\text{Pic}^\lambda_{X/k},\mu^{-1}) \simeq \Db(X)$.  Taking $X=A$, one recovers \cite[Theorem 4]{AAFH21}, and taking $\lambda=0$, one obtains the derived equivalences in \cite[Theorem 5]{RR23}.

Using \cite{dJO22}, we can obtain a partial converse to \Cref{0 t: moduli}. 
\begin{thmx}[\Cref{2 t: converse}]\label{0 t: converse}
Let $A$ be an abelian variety over a field $k$ and let $X$ be an $A$-torsor.  Let $Y$ be a smooth projective variety, and let $\Phi_{\EE}:D^b(Y,\mu^{-1}) \to \Db(X)$ be an equivalence. If either
\begin{enumerate}
    \item $k$ has characteristic $0$,
    \item $\mu \in Br_1(Y):=\text{Ker}(\text{Br}(Y) \to \text{Br}(Y_{\overline{k}}))$,
    \item $Y$ is a torsor under an abelian variety,
\end{enumerate}
then $Y$ is a moduli space of simple semi-homogeneous sheaves on $X$.  In particular, $Y$ is a torsor under an abelian variety.
\end{thmx}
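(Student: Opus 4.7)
The plan is to show that each fiberwise restriction $E_y := \EE|_{\{y\} \times X}$ is (up to shift) a simple semi-homogeneous sheaf on $X$, and then to identify $Y$ with a moduli space of such sheaves using the universal property of $\EE$.

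First, since $\Phi_{\EE}$ is an equivalence and the twist $\mu$ becomes trivial at a geometric point of $Y$, the objects $\Phi_{\EE}(k(y)) \simeq E_y$ are honest complexes on $X$, and preservation of $\Ext$ groups, together with smoothness of $Y$ and the equality $\dim Y = \dim X = g$ (from the equality of Serre functors), shows that the $E_y$ automatically satisfy conditions (1) and (2) listed in the introduction. In particular, they are forced to be single sheaves up to shift in any setting where condition (1) implies this, and in general they represent a family of simple pairwise-orthogonal objects of $\Db(X)$.

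The second step is to upgrade this to the assertion that each $E_y$ is (up to shift) a simple semi-homogeneous sheaf. Case (1) follows immediately from \cite{dJO22}. In case (2), the hypothesis $\mu \in \mathrm{Br}_1(Y)$ means that base change to $\overline{k}$ trivializes $\mu$, yielding a genuine (untwisted) derived equivalence $\Db(Y_{\overline{k}}) \xrightarrow{\sim} \Db(X_{\overline{k}})$; the Orlov--Polishchuk-type structural results for derived equivalences involving torsors under abelian varieties then force $Y_{\overline{k}}$ to be such a torsor, placing us in the situation of case (3) after one descends the conclusion from $Y_{\overline{k}}$ to $Y$. For case (3), I would combine Mukai's classification \cite{Mu78} of objects on torsors under abelian varieties satisfying (1) with the explicit description of Fourier--Mukai kernels between derived categories of such torsors to identify $E_y$ as a simple semi-homogeneous sheaf up to shift.

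Once each $E_y$ is so identified, the family $\EE$ induces a moduli morphism $i_{\EE}: Y \to \sdx$ factoring through the subspace parameterizing simple semi-homogeneous sheaves with the common numerical invariants. By \cite[Lemma 5.2]{LO15} this is an open immersion, and since both $Y$ and the ambient component are proper torsors under abelian varieties of the same dimension (using \Cref{0 t: moduli} together with the construction of the moduli space carried out in this paper), $i_{\EE}$ must be an isomorphism. Thus $Y$ is a moduli space of simple semi-homogeneous sheaves on $X$, and in particular a torsor under an abelian variety.

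The main obstacle will be case (2): transferring the ``torsor under an abelian variety'' conclusion from $Y_{\overline{k}}$ back down to $Y$ and verifying that the identification of $E_y$ as a simple semi-homogeneous sheaf is compatible with Galois descent. This hinges on both properties being geometric and Galois-invariant, but keeping careful track of the twist $\mu$ and its trivializations under base change is where the essential bookkeeping lies.
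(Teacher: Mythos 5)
Your overall strategy---show each $\EE_y$ is a simple semi-homogeneous sheaf up to shift, then use the moduli morphism and equidimensionality to get an isomorphism---matches the paper's. Case (1) is also handled identically, via \cite{dJO22}. But there is a genuine gap in your treatment of case (2), and your route for case (3) is vague enough that the hard step is not addressed.

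For case (2), you propose to trivialize $\mu$ over $\overline{k}$, obtain an untwisted equivalence $\Db(Y_{\overline{k}}) \simeq \Db(X_{\overline{k}})$, and then invoke ``Orlov--Polishchuk-type structural results'' to force $Y_{\overline{k}}$ to be an abelian variety, thereby reducing to case (3). This reduction does not work. The structural results of Orlov and Polishchuk presuppose that \emph{both} sides are abelian varieties (or torsors); they do not tell you that a smooth projective variety derived equivalent to an abelian variety must itself be an abelian variety. That statement \emph{is} the content of \cite[Theorem 1.1]{dJO22}, which is only known in characteristic $0$ --- precisely the hypothesis that case (2) is meant to avoid. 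The paper's actual argument for case (2) is different: it first shows that $\dim \mathbf{S}(\EE_y) \geq \dim \mathrm{Pic}^0_{Y/k}$ for any $Y$, by lifting $\Phi_\EE$ to a quasi-equivalence of dg-enhancements (using the uniqueness theorem of \cite{CNS22}), passing to the autoequivalence group schemes and their identity components (via \cite{TV07} and \cite{Ros09}) to get $\mathrm{Autoeq}^0 \simeq A \times \hat{A}$, and observing that $\mathrm{Pic}^0_{Y/k}$ acts trivially on $k(y)$ and hence on $\EE_y$. Then in case (2) specifically, it invokes \cite[Theorem A.1]{Hon18} to conclude $\dim \mathrm{Pic}^0_{Y/k} = \dim \mathrm{Pic}^0_{X/k} = g$, which feeds into \cite[Remark 1.3]{dJO22} to conclude semi-homogeneity without a characteristic restriction. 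Your proposal contains none of this.

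For case (3), your appeal to ``Mukai's classification of objects satisfying (1)'' also skips the hard step. Mukai's results in \cite{Mu78} classify semi-homogeneous \emph{sheaves}; they do not say that a simple object of $\Db(X)$ with $\dim \Ext^i = \binom{g}{i}$ is automatically a shifted semi-homogeneous sheaf --- that again is dJO22's theorem, which in positive characteristic requires the a priori bound $\dim \mathbf{S}(\EE_y) \geq g$. The paper gets this bound from the autoequivalence-group argument sketched above (for case (3) it is immediate since $\dim \mathrm{Pic}^0_{Y/k} = \dim Y$). Finally, two smaller omissions: you need an argument that the shift is constant across $y \in Y$ (the paper uses irreducibility of $\YY$ and the open cover by the preimages of $\mathcal{S}pl_{X/k}[n]$), and the open-immersion step requires the twisted analogue of \cite[Lemma 5.2]{LO15}, which the paper supplies as \Cref{2 p: open immersion}; you cannot cite the untwisted version directly.
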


We note that if \cite[Theorem 1.1]{dJO22} is true in arbitrary characteristic, then \Cref{0 t: converse} is true for arbitrary $Y$; the characteristic $0$ hypotheses are used exactly once in the proof of \cite[Theorem 1.1]{dJO22}, and it's not clear whether or not they're necessary (see \cite[Remark 1.3]{dJO22}).  In the proof \Cref{0 t: converse}, we work around these hypotheses to establish (2) and (3) in arbitrary characteristic.

Combining \Cref{0 t: moduli} and \Cref{0 t: converse} one obtains a criterion for certain twisted derived equivalences between torsors under abelian varieties.  To be precise, we've shown the following.

\begin{thmx}[\Cref{2 c: theorem C}] \label{0 c: criterion}
Let $A$ and $B$ be abelian varieties over field $k$, let $X$ be an $A$-torsor, and let $Y$ be a $B$-torsor.  Then there is a twisted derived equivalence $\Db(Y, \mu^{-1}) \simeq \Db(X)$ if and only if $Y$ is a moduli space of simple semi-homogeneous sheaves on $X$ and $\mu$ is the universal obstruction.  In particular, $\Db(Y) \simeq \Db(X)$ if and only if $Y$ is a fine moduli space of simple semi-homogeneous sheaves on $X$.
\end{thmx}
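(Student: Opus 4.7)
The plan is to assemble the criterion directly from \Cref{0 t: moduli} and \Cref{0 t: converse}, using the hypothesis that $Y$ is a torsor under an abelian variety precisely to land in the third case of \Cref{0 t: converse} without any restriction on the characteristic or on $\mu$.

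For the ``if'' direction, suppose $Y$ is a moduli space of simple semi-homogeneous sheaves on $X$ with universal obstruction $\mu \in \text{Br}(Y)$. Then \Cref{0 t: moduli} produces a twisted derived equivalence $\Phi_{\EE}: \Db(Y,\mu^{-1}) \to \Db(X)$ whose kernel is the universal twisted sheaf, so the conclusion is immediate. This direction does not use that $Y$ is a torsor under an abelian variety; it is essentially the content of Theorem A.

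For the ``only if'' direction, suppose given a twisted derived equivalence $\Phi_{\EE}:\Db(Y,\mu^{-1}) \to \Db(X)$. Since $Y$ is a $B$-torsor, hypothesis (3) of \Cref{0 t: converse} is satisfied, so $Y$ is a moduli space of simple semi-homogeneous sheaves on $X$. The remaining point is to identify $\mu$ with the universal obstruction: the kernel $\EE$ on $Y \times X$ must be (up to twist by a line bundle pulled back from $Y$) the universal $\mu^{-1}$-twisted sheaf, because the associated classifying morphism $Y \to \sdx$ is an open immersion by \cite[Lemma 5.2]{LO15} and identifies $Y$ with the chosen moduli space; thus $\mu^{-1}$ agrees with the inverse of the universal obstruction on $Y$. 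This step is the main place where care is required, since two different universal families differ by twisting by a line bundle, and I would appeal to the standard argument that such twists do not alter the Brauer class.

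The ``in particular'' statement is the specialization $\mu = 0$: a derived equivalence $\Db(Y) \simeq \Db(X)$ gives, by the above, an identification of $Y$ with a moduli space of simple semi-homogeneous sheaves on $X$ whose universal obstruction is trivial, which is exactly the condition that the moduli space be fine. Conversely, if $Y$ is a fine moduli space then $\mu = 0$ in the twisted derived equivalence of \Cref{0 t: moduli}, yielding an untwisted derived equivalence. I expect the proof to be quite short once the bookkeeping of the universal obstruction in the ``only if'' direction is in place; the real content has already been absorbed into Theorems A and B.
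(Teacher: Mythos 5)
Your approach is the paper's: \Cref{2 c: theorem C} is an immediate combination of \Cref{2 t : equivalent to ssh} (the ``if'' direction) and case (3) of \Cref{2 t: converse} (the ``only if'' direction), and the paper gives no separate proof. The one wrinkle is that the body statement of the converse result, \Cref{2 t: converse}, already asserts that $\mu$ is the universal obstruction and $\EE$ a shift of the universal sheaf, so the extra paragraph you add to recover this is not needed; moreover, the open-immersion input you invoke there should, in the twisted setting, be the paper's \Cref{2 p: open immersion} rather than \cite[Lemma 5.2]{LO15}, which concerns untwisted kernels.
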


One might wonder to what degree \Cref{0 c: criterion} extends the existing derived equivalence criterion for abelian varieties over algebraically closed fields established by Orlov and Polishchuk (see \cite{Or02} and \cite{Pol96}).  Before explaining, we recall their main results.  In \cite{Or02}, it was shown that with any derived equivalence $\Phi_\EE:D^b(A) \xrightarrow{\sim} \Db(B)$ between two abelian varieties over an arbitrary field one can associate an isomorphism 
$$f_\EE=\begin{pmatrix}
    x & y \\
    z & w
\end{pmatrix}: A \times \hat{A} \rightarrow B \times \hat{B},$$
which is \textit{isometric}, i.e., such that
$$f_\EE^{-1}={\begin{pmatrix}
    \hat{w} & -\hat{y} \\
    -\hat{z} & \hat{x}
\end{pmatrix}}.$$
When $k$ is algebraically closed and of characteristic $0$, it was also shown that every isometric isomorphism arises this way.  In \cite{Pol96}, Polishchuk proved that when $k$ is algebraically closed of arbitrary characteristic, the existence of an isometric isomorphism $A \times \hat{A} \simeq B \times \hat{B}$ implies that $\Db(A) \simeq \Db(B)$.  Thus, two abelian varieties $A$ and $B$ over an algebraically closed fields are derived equivalent if and only if there exists an isometric isomorphism $A \times \hat{A} \simeq B \times \hat{B}$.  There are two obvious ways in which we can try to generalize this result.  We can either 
\begin{enumerate}
    \item Assume that $k$ is algebraically closed and replace the existence of an isometric isomorphism with a similar notion to obtain a criterion for twisted derived equivalence of the form $\Db(B,\beta^{-1}) \simeq \Db(A)$.
    \item Determine whether or not every isometric isomorphism between abelian varieties over an arbitrary field arises from a derived equivalence.
\end{enumerate}
Using some results from \cite{Pol12} we are able to obtain the natural twisted analogue of the usual criterion

\begin{thmx}[\Cref{3 p: Lagrangian equivalences}] \label{0 c: Lagrangians}
    Let $A$ and $B$ be abelian varieties over an algebraically closed field $k$.  Then there exists a twisted derived equivalence $\Db(B,\beta^{-1}) \simeq \Db(A)$ if and only if $B$ is isomorphic a Lagrangian abelian subvariety of $A \times \hat{A}$.
\end{thmx}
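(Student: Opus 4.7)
The plan is to deduce this from Theorem \ref{0 c: criterion} together with a dictionary, due essentially to Polishchuk \cite{Pol12}, between moduli spaces of simple semi-homogeneous sheaves on $A$ and Lagrangian abelian subvarieties of $A \times \hat{A}$. Specifically, Theorem \ref{0 c: criterion} (applied with $X = A$ viewed as an $A$-torsor and $Y = B$ viewed as a $B$-torsor) says that a twisted derived equivalence $\Db(B, \beta^{-1}) \simeq \Db(A)$ exists, for some Brauer class $\beta$, if and only if $B$ carries the structure of a moduli space of simple semi-homogeneous sheaves on $A$, with $\beta$ the universal obstruction. Hence it suffices to show that such a moduli structure on $B$ is equivalent to $B$ being isomorphic to a Lagrangian abelian subvariety of $A \times \hat{A}$.

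For the direction $(\Leftarrow)$, I would take a Lagrangian embedding $\iota : B \hookrightarrow A \times \hat{A}$ and construct a family of simple semi-homogeneous sheaves on $A$ parameterized by $B$ by associating to each point $(a, L) \in B$ the translated, twisted semi-homogeneous sheaf built from $a$ and $L$ as in \cite{Mu78}. The Lagrangian property is precisely what guarantees the Ext-dimension condition (1) of the introduction for each fiber and the Ext-vanishing condition (2) between distinct fibers; this is the content of the relevant results in \cite{Pol12}. The family then exhibits $B$ as a moduli space of simple semi-homogeneous sheaves on $A$.

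For the direction $(\Rightarrow)$, I would start from a moduli structure on $B$ and use Mukai's theory from \cite{Mu78} to assign to each sheaf $E_b$ its characteristic pair $(a_b, L_b) \in A \times \hat{A}$ encoding how $E_b$ transforms under translation and tensoring with line bundles. This yields a morphism $B \to A \times \hat{A}$, which by rigidity arguments for abelian varieties and the fact that distinct points of $B$ correspond to non-isomorphic sheaves is a closed immersion; its image is isotropic as a consequence of condition (2), and it has the correct dimension $\dim A$ by construction of the moduli space, hence is Lagrangian. The main obstacle is making the Polishchuk correspondence between Ext conditions (1)--(2) and the geometric Lagrangian condition watertight at the level of families, rather than just pointwise; this is where the technical input of \cite{Pol12} is essential, together with the algebraic closure of $k$ to guarantee existence of the relevant line bundles and sheaves.
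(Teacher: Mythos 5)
Your reduction to Theorem~\Cref{0 c: criterion} is the right move, and both directions of your dictionary are aimed at the right target (the paper packages this as the combination of \Cref{3 p: lagrangian classification} and \Cref{2 t : equivalent to ssh}). Your $(\Leftarrow)$ direction is essentially the paper's: given a Lagrangian $Z \subseteq A\times\hat{A}$, one realizes $Z = \mathbf{S}(\xi)$ for some Chern character $\xi$ with $\mathcal{SSH}^\xi_{A/k}$ nonempty --- in the paper this is \Cref{3 p: lagrangian classification}(2), which proceeds by reducing (via the Fourier transform isometry of \Cref{1 l: Poincare S(F)} and a twist by an ample line bundle, using \Cref{3 l: finite intersection}) to the case of vector bundles, where Mukai's explicit classification in \cite{Mu78} applies (\Cref{3 l: vector bundle lagrangians}).

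Your $(\Rightarrow)$ direction, however, contains a genuine conceptual gap. You propose a morphism $B \to A\times\hat{A}$ sending a sheaf $E_b$ to a ``characteristic pair'' $(a_b,L_b)$, but no such morphism exists: the stabilizer $\mathbf{S}(E_b) \subseteq A\times\hat{A}$ of a simple semi-homogeneous sheaf under the translation/twist action is a $g$-dimensional abelian subvariety, not a point, and moreover it is the \emph{same} subvariety $\mathbf{S}(\xi)$ for every $b$, since it is determined by the numerical Chern character alone. There is likewise no canonical translation datum $(a_b,L_b)$ relating $E_b$ to a reference sheaf $E_{b_0}$: any such pair is only defined up to the action of $\mathbf{S}(\xi)$, so the best one gets is an isomorphism $B \simeq (A\times\hat{A})/\mathbf{S}(\xi)$ (after choosing a base point), not an embedding into $A\times\hat{A}$. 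The correct identification of the Lagrangian is therefore the \emph{stabilizer} $\mathbf{S}(\xi)$ itself (this is \Cref{3 p: lagrangian classification}(1)), while $B$ is isomorphic to the quotient $(A\times\hat{A})/\mathbf{S}(\xi) \simeq \widehat{\mathbf{S}(\xi)}$; you then need the additional observation that the dual of a Lagrangian abelian subvariety is again isomorphic to a Lagrangian abelian subvariety of $A\times\hat{A}$ (a fact the paper leaves implicit, but which follows from the symmetry of the symplectic biextension formalism). In short, your argument should pass through the stabilizer subgroup scheme rather than through a hypothetical classifying morphism from $B$; conditions (1) and (2) of the introduction enter to show $\mathbf{S}(\xi)$ has dimension $g$ and that $B$ is a torsor under $(A\times\hat{A})/\mathbf{S}(\xi)$, not to cut out an isotropic image of $B$.
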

For those unfamiliar with symplectic biextensions and, in particular, with the notion of a Lagrangian abelian subvariety, we recall these notions in \Cref{3 SECTION}.  Twisted derived equivalences between $A$ and the Lagrangian abelian subvarieties of $A \times \hat{A}$ were established in \cite{Pol96} when $k$ is of characteristic $0$ (see \cite[Remarks 2.3.1]{Pol12} for the assumptions on the characteristic).  The methods we use here are quite different from those used in \cite{Pol96}, and it's not clear whether or not we obtain the same equivalences.

For abelian varieties over arbitrary fields, the situation is much more complicated.  For example, there exists an abelian variety $A$ and an isometric isomorphism from $A \times \hat{A}$ to itself which does not arise from any derived equivalence (see \Cref{4 e: counterexample} and \Cref{4 r: counterexample remark}).  Since $A$ is derived equivalent to itself, this doesn't show that the usual equivalence criterion fails over arbitrary fields.  We're also able to associate with any derived equivalence between an $A$-torsor and a $B$-torsor an isometric isomorphism between $A \times \hat{A}$ and $B \times \hat{B}$ in a way that generalizes \cite[Corollary 2.13.]{Or02}.  With this in mind, one might ask whether or not every isometric isomorphism between $A \times \hat{A}$ and $B \times \hat{B}$ arises from an equivalence between an $A$-torsor and a $B$-torsor via this construction.  Unfortunately, we're unable to determine whether or not this is true.  All we're able to show is the following.

\begin{thmx}[\Cref{3 p: theorem E}] \label{0 c: isometric iso}
    Let $A$ and $B$ be abelian vareities over a field $k$, and let $f:A \times \hat{A} \to B \times \hat{B}$ be an isometric isomorphism.  Then for any $A$-torsor $X$, there exists a $B$-torsor $Y$ and a twisted derived equivalence $\Db(Y,\mu^{-1}) \simeq \Db(X)$, where $\mu \in \text{Br}_1(Y)$.  In particular, if $k$ is a finite field, then $\Db(B) \simeq \Db(A)$.
\end{thmx}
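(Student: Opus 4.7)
The plan is to combine \Cref{0 t: moduli} with Mukai's classification of simple semi-homogeneous sheaves: produce a $B$-torsor $Y$ as a moduli space of ssh sheaves on $X$ whose numerical invariant is dictated by $f$, apply \Cref{0 t: moduli} to obtain the twisted equivalence, and then check that the obstruction lies in $\text{Br}_1$.

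\emph{Constructing $Y$.} First I would extract from $f$ a numerical invariant $\xi$ for simple ssh sheaves on $X$. Over $\overline{k}$, Mukai's work \cite{Mu78} identifies simple ssh sheaves on $A_{\overline{k}}$ of a fixed numerical type with a torsor under a quotient of $\hat{A}_{\overline{k}}$; using the components of $f$ together with duality, one chooses $\xi$ so that this torsor is canonically identified with $B_{\overline{k}}$. The isometry condition on $f$ should be exactly what is required for this data to descend Galois-equivariantly: $\xi$ then defines a numerical invariant for ssh sheaves on $X$, and the moduli space $Y := \mathcal{S}pl_{X/k}(\xi)$ is a $B$-torsor.

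\emph{Twisted equivalence and obstruction.} Once $Y$ is in hand, \Cref{0 t: moduli} immediately gives a twisted derived equivalence $\Db(Y,\mu^{-1}) \simeq \Db(X)$ with $\mu$ the universal obstruction. To check $\mu \in \text{Br}_1(Y)$, I would use that over $\overline{k}$ the universal family on the moduli space of ssh sheaves can be realized as an honest (untwisted) coherent sheaf by the constructions of \cite{G08, Mu98}; hence $\mu$ vanishes after base change to $\overline{k}$, placing it in $\text{Br}_1(Y)$.

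\emph{Finite field case and main obstacle.} When $k$ is finite, Lang's theorem gives $X \simeq A$ and $Y \simeq B$, so the twisted equivalence becomes $\Db(B,\mu^{-1}) \simeq \Db(A)$ for some $\mu \in \text{Br}_1(B)$. To upgrade this to $\Db(B) \simeq \Db(A)$, I would use that $\text{Br}(k) = 0$ (Wedderburn) and $H^1(k,\hat{B}(\overline{k})) = 0$ (Lang applied to $\hat{B}$), so $\text{Br}_1(B)$ injects into $H^1(k,\NS(B_{\overline{k}}))$; the specific class $\mu$ arising from the numerical data of the moduli construction should then be shown trivial, possibly after modifying the trivialization of $Y$ as a $B$-torsor. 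The main obstacle throughout is the first step: translating the abstract isometric iso $f$ into a concrete numerical invariant $\xi$ whose moduli descends to a $B$-torsor. Verifying that the isometry condition on $f$ encodes precisely the Galois compatibility required for $Y$ (and not some other quotient of $\hat{A}$) is the crux of the argument, and it is here that the assumption that $f$ is defined over $k$ — as opposed to merely over $\overline{k}$ — plays the decisive role.
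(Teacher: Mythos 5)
The general outline of your argument (construct $Y$ from $f$ as a moduli space of simple semi-homogeneous sheaves, then apply \Cref{0 t: moduli}) matches the paper's approach. But there are two genuine gaps, both coming from the same missed ingredient.

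\emph{The $\text{Br}_1$ step.} You justify $\mu_{\overline{k}}=0$ by claiming that over $\overline{k}$ the constructions of \cite{G08,Mu98} produce an honest (untwisted) universal sheaf. This is false: those authors only show that the universal sheaf, \emph{when it exists}, is the kernel of a derived equivalence. In general the universal obstruction on $\Ssh(\xi)$ is nontrivial even over $\overline{k}$ --- this is precisely why \Cref{0 c: Lagrangians} allows twisted (not untwisted) equivalences with arbitrary Lagrangians. The reason the obstruction vanishes \emph{here} is specific to the situation: $\mathbf{S}(\zeta)=f(\hat{A})$ (or, depending on which direction you run the argument, $f^{-1}(\hat{B})$) is a \emph{split} Lagrangian of $A\times\hat{A}$, because $\hat{A}$ is split and isometric isomorphisms preserve split Lagrangians (\Cref{3 l: image of lagrangians}). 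Splitness gives a section of the orbit map $\tau_E:A\times\hat{A}\to\text{SSH}^\zeta_{X/k,\overline{k}}$, which lifts to a section of the $\Gm$-gerbe $\mathcal{SSH}^\zeta\to\text{SSH}^\zeta$ and hence trivializes the obstruction over $\overline{k}$. Without noticing that the Lagrangian is split you cannot conclude $\mu\in\text{Br}_1(Y)$.

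\emph{The finite field step.} Your route via $\text{Br}(k)=0$, $H^1(k,\hat{B})=0$, and injecting $\text{Br}_1(B)$ into $H^1(k,\text{NS}(B_{\overline{k}}))$ stops at ``the specific class $\mu\dots$ should then be shown trivial'' --- which is the whole content. It is also not clear this route closes: $H^1(k,\text{NS}(B_{\overline{k}}))$ need not vanish for $k$ finite, so one still needs an argument tied to the geometry. The paper's argument is cleaner and again hinges on split Lagrangians: Lang's theorem gives a $k$-point of the $A$-torsor $\text{SSH}^\zeta_{B/k}$; $\text{Br}(k)=0$ lets you lift it to an honest sheaf $E\in\mathcal{SSH}^\zeta_{B/k}(k)$; and the orbit map $\tau_E:B\times\hat{B}\to\text{SSH}^\zeta_{B/k}$ is the quotient by the split Lagrangian $\mathbf{S}(\zeta)=f(\hat{A})$, so it admits a section over $k$, which lifts to a section of the gerbe and kills $\mu$ outright.

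A minor imprecision: in your ``Constructing $Y$'' step, the moduli space is a torsor under $(A\times\hat{A})/\mathbf{S}(\zeta)$, not under a quotient of $\hat{A}_{\overline{k}}$; and the passage from $f$ to $\zeta$ is not a matter of Galois descent of data over $\overline{k}$ but goes through the classification of Lagrangian abelian subvarieties over $k$ (\Cref{3 p: lagrangian classification} and \Cref{3 c: image of isometric iso}).
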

Finally we mention that in \Cref{3 SECTION} we clarify a remark of Orlov's and show that any isometric isomorphism
$$f=\begin{pmatrix}
    x & y \\
    z & w
\end{pmatrix}: A \times \hat{A} \to B \times \hat{B}$$
can be factored as the composition of two isometric isomorphisms
$$f=\begin{pmatrix}
    x_2 & y_2 \\
    z_2 & w_2
\end{pmatrix} \circ \begin{pmatrix}
    x_1 & y_1 \\
    z_1 & w_1
\end{pmatrix}$$
such that $y_1$ and $y_2$ isogenies.  As was noted in \cite{MP17}, this fact is not obvious.  We show that this follows from a result of Polishchuk's (see \Cref{3 r: factorization}).

\subsection*{Outline}

In \Cref{1 SECTION} we recall some basic results on simple semi-homogeneous sheaves and show that the moduli space of such sheaves is a torsor under an abelian variety.  In \Cref{2 SECTION} we prove \Cref{0 t: moduli} and \Cref{0 t: converse}.  In \Cref{3 SECTION} we recall some generalities on symplectic biextensions, and in \Cref{4 SECTION} we prove \Cref{0 c: Lagrangians} and \Cref{0 c: isometric iso}.  In \Cref{A SECTION} we recall basic results on twisted derived categories.

\subsection*{Notation and Conventions}  Unless otherwise specified, we always work over an arbitrary field $k$ with algebraic closure $\overline{k}$.

Given a smooth, projective variety $X$, we let $\Db(X)$ denote the bounded derived category of the abelian category $\text{Coh}(X)$.  Given an element $\mu \in \text{Br}(X)$, we denote by $\Db(X,\mu)$ the bounded derived category of the abelian category of coherent $\mu$-twisted sheaves (see \Cref{A r: notation} and \Cref{A l: same}).  For a point $x \in X(k)$, we let $k(x)$ denote $i_{x,*}\mathcal{O}_{\text{Spec}(k)}$, where $i_{x}:\Spec(k) \to X$ denotes the inclusion of $X$.  When $k$ is algebraically closed we can define $k(x) \in \Db(X,\mu)$ similarly for any $x \in \Db(X,\mu)$ (see \Cref{A d: twisted skyscraper sheaves}).  Whether or not we're talking about the twisted version of $k(x)$ will be clear from context.

Given a morphism $X \to S$, an object $F \in \Db(X)$, and a morphism $T \to S$, we let $X_{T}:=X\times_{S}T$ and we denote by $F_{T}$ the derived pullback of $F$ along the induced map $X_T \to X$.  When $T=\text{Spec(R)}$ for a ring $R$, we write $X_{R}$ and $F_{R}$, and if $R=k(s)$ for some $s \in S$, we write $F_{s}$.  We use the same conventions for objects of twisted derived categories.

We let $\mathscr{D}_{X/k}$ denote the moduli stack of complexes on a smooth projective variety $X$ (see \cite[Section 5]{LO15} or \cite{Lie06}).  We call an object $P \in \mathscr{D}_{X/k}(T)$ \textit{simple} if the natural map $\Gm \to \text{Aut}_{\mathscr{D}_{X/k}}(P)$ is an isomorphism and denote by $\text{s}\mathscr{D}_{X/k}$ the open substack of $\mathscr{D}_{X/k}$ parameterizing simple complexes.  We let $\mathcal{S}pl_{X/k}$ denote the open substack of $\text{s}\mathscr{D}_{X/k}$ parameterizing complexes concentrated in degree $0$ and refer to it as the \textit{stack of simple sheaves}.  We call a $T$-point of $\mathcal{S}pl_{X/k}$ a $T$\textit{-flat family of simple sheaves on $X$}, and we call a $k$-point of $\mathcal{S}pl_{X/k}$ a \textit{simple sheaf}.  We denote by $\text{sD}_{X/k}$ (resp.~ $\text{Spl}_{X/k}$) the rigidification of $\text{s}\mathscr{D}_{X/k}$ (resp.~ $\mathcal{S}pl_{X/k}$) along $\Gm$ and refer to it as the \textit{moduli space of simple complexes} (resp.~ \textit{moduli space of simple sheaves}) \textit{on $X$}. 

All torsors are for the fppf topology.  Given an abelian variety $A$, an $A$-torsor $X$, and a morphism $f:T \to A$, we denote by $t_f:X_T \to X_T$ the restriction of the action map $X_T \times_T A_T  \to X_T$ to $X_T=X_T \times _T T$ via the map $\text{id}\times_T f$.
Given an abelian variety $A$, we let $\mathscr{P}_{A}$ denote the normalized Poincaré bundle on $A \times \hat{A}$.  Sometimes we simply write $\mathscr{P}$ when no confusion arises.  Given a morphism $h:T \to \hat{A}$, we let $\mathscr{P}_h$ denote the line bundle $(\text{id}\times h)^*\mathscr{P}$ on $A \times T$.  For a morphism $f:T \to A$, we denote by $\mathscr{P}_{\hat{f}}$ the line bundle $(f \times \text{id})^*\mathscr{P}$ on $T \times \hat{A}$. For a line bundle $L$ on $A$, we denote by $\phi_L:A \to \hat{A}$ the morphism induced by the line bundle $m^*L\otimes p_1^*L^{-1} \otimes p_2^*L^{-1}$ on $A \times A$, where $m:A \times A \to A$ is the multipication map.  We write either $[n]$ or $n$ for the multiplication by $n$ map.

Given a smooth, projective variety $X$, we denote by $\text{CH}_{\text{num}}(X)$ the Chow ring of $X$ modulo numerical equivalence.  We write $\text{CH}_{\text{num}}(X)_{\mathbb{Q}}$ for $\text{CH}_{\text{num}}(X) \otimes \mathbb{Q}$.  Let $\text{NS}_{X/k}:=\text{Pic}_{X/k}/\text{Pic}^0_{X/k,\text{red}}$ denote the Néron–Severi group scheme of $X$.   Let $\text{Br}(X)$ denote the Brauer group of $X$, and let $\text{Br}_1(X)=\text{Ker}(\text{Br}(X) \to \text{Br}(X_{\overline{k}}))$.  

\subsection*{Acknowledgments}  I would like to thank Brendan Hassett for suggesting what became this problem and for countless useful suggestions and conversations.  I would also like to thank Dan Abramovich, Joe Hlavinka, Eric Larson, Max Lieblich, Martin Olsson, and Mihnea Popa for useful conversations. 

\section{Simple Semi-Homogeneous Sheaves}\label{1 SECTION}
Throughout this section we fix a $g$-dimensional abelian variety $A$ over  a field $k$ and an $A$-torsor $X$.  Let $\text{sD}_{X/k}$ denote the moduli space of simple complexes on $X$ (see \cite[Section 5]{LO15}).  $A$ acts on $\sdx$ by pulling back along translation, and $\text{Pic}^0_{X/k}$ acts on $\sdx$ by the tensor product.  Letting $A$ act on $\sdx$ as above but letting $\text{Pic}^0_{X/k}$ act by inverting and then tensoring, we obtain an action of $A \times \text{Pic}^0_{X/k}$ on $\sdx$.  To be explicit, under this action, a point $a \in A(k)$ and a line bundle $L$ on $X$ send an object $F \in \Db(X)$ to $t_a^*F \otimes L^{-1}$.  Using the canonical isomorphism $\hat{A} \simeq \text{Pic}^0_{X/k}$ (see \cite[Theorem 3.0.3]{Al02}), we will always view this as an action of $A \times \hat{A}$ on $\text{sD}_{X}$.  Note that $A \times \hat{A}$ acts on $\spl$ by restriction.

For a simple object $F \in \Db(X)$, we denote by $\mathbf{S}(F)$ the stabilizer of $F$, viewed as a $k$-point of $\sdx$, for this action.  It is a closed subgroup of $A \times \hat{A}$.  When $F$ is a sheaf, this is the same as the stabilizer of $F$ for the action of $A \times \hat
A$ on $\spl$. 

\begin{rem}\label{1 r: MukaiNotation} In \cite{Mu78}, Mukai uses $\Phi(F)$ to denote what we call $\mathbf{S}(F)$. Note that by \cite[(3.5.1)]{Mu78}, $\Phi(F)$ and $\mathbf{S}(F)$ define the same subgroup of $A \times \hat{A}$.  Indeed, $\mathbf{S}(F)$ fits into a cartesian square
$$
\begin{tikzcd}
\mathbf{S}(F) \arrow[d] \arrow[r] & {\text{Spl}_{X/k}} \arrow[d, "\Delta"] \\
{A \times \hat{A}} \arrow[r]                      & {\text{Spl}_{X/k} \times \text{Spl}_{X/k}}          
\end{tikzcd}$$
where the bottom map sends a point $(h,\LL)\in A(T) \times \hat{A}(T)$ to $(t_h^*F_T \otimes \LL^{-1},F_T)$.  It follows from  \cite[\href{https://stacks.math.columbia.edu/tag/06QD}{Tag 06QD}]{Stacks}. that $\mathbf{S}(F)$ is the fppf sheafification of the presheaf which sends a scheme $T$ to the set of points $(h,\LL) \in A(T) \times \hat{A}(T)$ such that $t_{h}^*F_T \otimes \LL^{-1} \simeq F_T$.
\end{rem}

\begin{defn}\label{1 d: semi-homog}
  A simple sheaf on $X$ is $\textit{semi-homogeneous}$ if $\mathbf{S}(F)$ is $g$-dimensional.
\end{defn} 

Simple semi-homogeneous sheaves admit the following useful description:
\begin{prop}\label{1 p:ext}
 Let $F$ be a simple sheaf on $X$.  Then
$\text{dim}(\text{Ext}^1(F,F)) \geq g$.
Moreover, equality holds if and only if $F$ is semi-homogeneous.     
\end{prop}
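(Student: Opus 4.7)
The plan is to study the infinitesimal action of $A \times \hat{A}$ on $\spl$ at $F$. Since $F$ is simple, standard deformation theory identifies $T_{[F]} \spl \simeq \text{Ext}^1(F, F)$. Differentiating the orbit map $(a, L) \mapsto [t_a^* F \otimes L^{-1}]$ at the identity yields a $k$-linear map
$$\kappa : \text{Lie}(A) \oplus H^1(X, \mathcal{O}_X) \to \text{Ext}^1(F, F)$$
whose kernel is $\text{Lie}(\mathbf{S}(F))$. On the summand $\text{Lie}(A) = H^0(X, T_X)$, $\kappa$ is contraction with the Atiyah class of $F$ (using triviality of $\Omega_X^1$); on $\text{Lie}(\hat{A}) = H^1(X, \mathcal{O}_X)$, it is $w \mapsto -w \cdot \text{id}_F$ via the $H^*(X, \mathcal{O}_X)$-module structure on $\text{Ext}^*(F,F)$.

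The heart of the argument, and the step I expect to be the main obstacle, is to show that $\text{Lie}(\mathbf{S}(F))$ is isotropic in the source with respect to the natural symplectic form
$$\omega\bigl((v_1, w_1), (v_2, w_2)\bigr) = \langle w_1, v_2\rangle - \langle w_2, v_1\rangle$$
coming from the perfect pairing $\text{Lie}(A) \otimes H^1(X, \mathcal{O}_X) \to k$ (the infinitesimal form of the Poincaré biextension on $A \times \hat{A}$). Unwinding definitions, this isotropy amounts to a cup-product identity in $\text{Ext}^2(F, F)$, which should follow from simplicity of $F$ together with the trace map $\text{Ext}^g(F, F) \to H^g(X, \mathcal{O}_X) = k$ (to detect scalar multiples of $\text{id}_F$) and the symmetry of Yoneda products of Atiyah classes against classes from $H^1(X, \mathcal{O}_X) \cdot \text{id}_F$. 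Granting isotropy, $\dim \text{Lie}(\mathbf{S}(F)) \leq g$, so rank-nullity yields
$$\dim \text{Ext}^1(F, F) \geq \dim \text{Im}\,\kappa = 2g - \dim \text{Lie}(\mathbf{S}(F)) \geq g.$$

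For the equality criterion I would combine the above with the orbit-stabilizer inequality $\dim \text{Ext}^1(F, F) \geq 2g - \dim \mathbf{S}(F)$, which follows because $T_{[F]} \spl$ bounds the local orbit dimension from above. If $\dim \text{Ext}^1(F, F) = g$, this inequality forces $\dim \mathbf{S}(F) \geq g$; combined with the upper bound $\dim \mathbf{S}(F) \leq \dim \text{Lie}(\mathbf{S}(F)) \leq g$ coming from isotropy, we conclude $\dim \mathbf{S}(F) = g$. Conversely, if $F$ is semi-homogeneous then isotropy is saturated and $\dim \text{Im}\,\kappa = g$; the matching upper bound $\dim \text{Ext}^1(F, F) \leq g$ then follows from the explicit description of simple semi-homogeneous sheaves — as pushforwards of line bundles along isogenies of abelian torsors, as in Mukai's treatment — allowing one to compute $\text{Ext}^1(F, F)$ directly.
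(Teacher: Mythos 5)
Your approach is genuinely different from the paper's and, as written, has a real gap at exactly the point you flag as the main obstacle. The paper avoids the deformation-theoretic computation entirely: it reduces to the case $X=A$ and $k=\overline{k}$, observes that for $L$ sufficiently ample $\widehat{F\otimes L}:=\Phi_{\mathscr{P}}(F\otimes L)$ is a locally free simple sheaf, and then uses \Cref{1 l: tensor S(F)} and \Cref{1 l: Poincare S(F)} to transport both $\dim\text{Ext}^1$ and $\mathbf{S}(-)$ through these operations so that Mukai's theorems for \emph{vector bundles} \cite[Prop.~3.16, Thm.~5.8]{Mu78} apply verbatim. Your plan is essentially to rerun Mukai's original Kodaira--Spencer argument directly for an arbitrary simple sheaf, which would make the result self-contained and independent of the Fourier--Mukai reduction; but the isotropy of $\text{Lie}(\mathbf{S}(F))$ with respect to $\omega$ is stated, not proved, and it is precisely the content of Mukai's lemma. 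For a locally free $F$ one can use the genuine trace map $\mathcal{E}nd(F)\to\mathcal{O}_X$ and the Atiyah-class machinery cleanly; for a general coherent sheaf the needed cup-product identity in $\text{Ext}^2(F,F)$ (a Heisenberg-type commutation relation between Atiyah contractions and $H^1(\mathcal{O}_X)\cdot\text{id}_F$) is plausible but requires the HKR/trace formalism for complexes and is not a formal consequence of simplicity; you should not assert it without a proof, since the entire inequality hangs on it.

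There are two further issues. First, in positive characteristic $\mathbf{S}(F)$ may a priori be non-reduced, so you must be careful to distinguish $\dim\mathbf{S}(F)$ from $\dim\text{Lie}(\mathbf{S}(F))$ throughout: your rank--nullity step uses $\ker\kappa=\text{Lie}(\mathbf{S}(F))$ (fine), but the ``orbit-stabilizer inequality'' $\dim\text{Ext}^1(F,F)\geq 2g-\dim\mathbf{S}(F)$ needs its own justification (roughly, that the orbit map $A\times\hat{A}\to\spl$ factors through an immersion of $(A\times\hat{A})/\mathbf{S}(F)$, which is not automatic for algebraic-space targets), and without it the deduction $\dim\mathbf{S}(F)\geq g$ does not follow from $\dim\text{Ext}^1=g$. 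Second, for the converse direction your ``matching upper bound'' appeals to the structure of simple semi-homogeneous sheaves as pushforwards of line bundles along isogenies; Mukai proves this description only for vector bundles, and extending it to arbitrary sheaves is exactly what the Fourier--Mukai twist accomplishes in the paper's proof. So either you would need to independently re-establish that classification for sheaves, or you end up re-importing the reduction your argument was meant to avoid. The cleanest fix is the paper's: prove \Cref{1 l: tensor S(F)} and \Cref{1 l: Poincare S(F)}, use them to pass to the locally free case, and cite Mukai.
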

Before giving the proof, we establish three useful lemmas.  The first one will be used frequently to reduce to working with abelian varieties over algebraically closed fields, and we will typically use it without mention.
\begin{lem}\label{1 l: reduction}
Let $\phi:Y \to X$ be an $A$-equivariant morphism between $A$-torsors.  The pullback map $\Spl \to \mathcal{S}pl_{Y/k}$ is an isomorphism, and the induced map $\spl \to \text{Spl}_{Y/k}$ is $A \times \hat{A}$-equivariant.  In particular, if $F$ is a simple sheaf on $X$ and $\phi:A \to X$ is a trivialization of $X$, then $\mathbf{S}(F)=\mathbf{S}(\phi^*F)$ so that $F$ is semi-homogeneous if and only if $\phi^*F$ is.
\end{lem}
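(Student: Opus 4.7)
The plan rests on the observation that any $A$-equivariant morphism $\phi : Y \to X$ between $A$-torsors is automatically an isomorphism. To verify this I would base-change to $\overline{k}$ and choose a point $\overline{y} \in Y(\overline{k})$: the assignments $a \mapsto a \cdot \overline{y}$ and $a \mapsto a \cdot \phi(\overline{y})$ give trivializations $A_{\overline{k}} \xrightarrow{\sim} Y_{\overline{k}}$ and $A_{\overline{k}} \xrightarrow{\sim} X_{\overline{k}}$ under which $\phi_{\overline{k}}$ is the identity on $A_{\overline{k}}$. Faithfully flat descent then upgrades this to an isomorphism over $k$.

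Once $\phi$ is an isomorphism, the pullback $\phi^*$ on each $T$-point is an equivalence of derived categories that preserves $T$-flatness, concentration in degree $0$, and, being fully faithful, simplicity. This immediately gives an isomorphism of stacks $\Spl \xrightarrow{\sim} \mathcal{S}pl_{Y/k}$, which descends to an isomorphism $\spl \xrightarrow{\sim} \text{Spl}_{Y/k}$ after rigidifying by $\Gm$.

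For $A \times \hat A$-equivariance, the $A$-equivariance of $\phi$ gives $t_a \circ \phi = \phi \circ t_a$ and hence $t_a^* \phi^* \simeq \phi^* t_a^*$. For the $\hat A$-factor, I would invoke the fact that the canonical identification $\hat A \simeq \text{Pic}^0_{-/k}$ of an $A$-torsor is manufactured intrinsically from the $A$-action (via descent of degree-zero line bundles from any trivialization), so $\phi^*$ induces the identity on $\hat A$ under the identifications $\hat A \simeq \text{Pic}^0_{X/k}$ and $\hat A \simeq \text{Pic}^0_{Y/k}$. Combined with compatibility of pullback with tensor product and inversion, this yields the claimed equivariance. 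The final identity $\mathbf{S}(F) = \mathbf{S}(\phi^*F)$ then follows formally, since an equivariant isomorphism of $k$-schemes sends stabilizers of points to stabilizers of their images, and hence one stabilizer is $g$-dimensional iff the other is.

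The only even mildly delicate point I expect to verify carefully is this compatibility of the two canonical identifications $\hat A \simeq \text{Pic}^0_{X/k}$ and $\hat A \simeq \text{Pic}^0_{Y/k}$ under $\phi^*$; everything else is bookkeeping once $\phi$ has been shown to be an isomorphism.
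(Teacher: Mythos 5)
Your proposal is correct and follows essentially the same route as the paper: observe that any $A$-equivariant map of $A$-torsors is an isomorphism, conclude the pullback is an isomorphism of stacks (hence of rigidifications), and then check $A \times \hat{A}$-equivariance using precisely the fact that the canonical identifications $\hat{A} \simeq \text{Pic}^0_{X/k}$ and $\hat{A} \simeq \text{Pic}^0_{Y/k}$ are compatible with $\phi^*$. You spell out a couple of steps (the descent argument showing $\phi$ is an isomorphism, and the formal deduction about stabilizers) in more detail than the paper, but the content and the one "delicate point" you flag are identical to the paper's.
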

\begin{proof}
    Any $A$-equivariant morphism between torsors is an isomorphism, so the pullback map is an isomorphism.  Moreover, the induced map on coarse moduli spaces is clearly equivariant with respect to the isomorphism $\text{id} \times \phi^*:A \times \text{Pic}_{X/k}^0 \to \text{A}\times \text{Pic}^0_{Y/k}$.  This map is, in fact, $A \times \hat{A}$-equivariant because the formation of the isomorphisms $\hat{A} \simeq \text{Pic}^0_{X/k}$ and $\hat{A} \simeq \text{Pic}^0_{Y/k}$ commutes with $\phi^*$.
\end{proof}

\begin{lem} \label{1 l: tensor S(F)}
    Let $A$ be an abelian variety over an algebraically closed field $k$.  Let $F \in \Db(A)$ be a simple object, and let $L$ be a line bundle on $A$.  Then the isomorphism
    $$f_L:\begin{pmatrix}
    1 & 0 \\
    \phi_L & 1
    \end{pmatrix}:A \times \hat{A} \to A \times \hat{A}$$
    restricts to an isomorphism $\mathbf{S}(F) \to \mathbf{S}(F \otimes L)$.
\end{lem}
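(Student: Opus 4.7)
The plan is to verify the claim on $T$-valued points, using the functorial description of $\mathbf{S}(F)$ from \Cref{1 r: MukaiNotation}: $\mathbf{S}(F)$ is the fppf sheafification of the presheaf sending $T$ to the set of pairs $(h,\LL) \in A(T) \times \hat{A}(T)$ for which $t_h^*F_T \otimes \LL^{-1} \simeq F_T$. Under the identification $\hat{A} \simeq \Pic^0_{A/k}$, the map $f_L$ sends such a pair to $(h,\, \LL \otimes \phi_L(h))$, where $\phi_L(h) \in \hat{A}(T)$ corresponds to the line bundle $t_h^*L_T \otimes L_T^{-1}$ on $A_T$.

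The core of the argument is a one-line tensor computation: if $t_h^*F_T \otimes \LL^{-1} \simeq F_T$, then
\[
t_h^*(F_T \otimes L_T) \otimes (\LL \otimes \phi_L(h))^{-1} \simeq (t_h^*F_T \otimes \LL^{-1}) \otimes L_T \simeq F_T \otimes L_T,
\]
since the factors $t_h^*L_T$ coming from $t_h^*(F_T \otimes L_T)$ and $(t_h^*L_T)^{-1}$ coming from $\phi_L(h)^{-1}$ cancel, leaving a single extra copy of $L_T$. This shows that $f_L$ carries the defining presheaf of $\mathbf{S}(F)$ into that of $\mathbf{S}(F \otimes L)$, hence induces a morphism on fppf sheafifications.

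To obtain the inverse, I would use that $\phi_{L^{-1}} = -\phi_L$, so $f_L^{-1} = f_{L^{-1}}$ as automorphisms of $A \times \hat{A}$. Applying the same calculation with $F \otimes L$ and $L^{-1}$ in place of $F$ and $L$ produces a morphism $\mathbf{S}(F \otimes L) \to \mathbf{S}(F)$ which is inverse to the one just constructed, yielding the desired isomorphism.

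I do not anticipate a genuine obstacle here. The only subtlety worth pinning down in advance is the sign/inverse convention for the $\hat{A}$-action (acting by $\LL \mapsto (-) \otimes \LL^{-1}$ rather than $\otimes \LL$), which is what forces $\LL^{-1}$ and $\phi_L(h)^{-1}$ to appear in the stabilizer condition; once these conventions are fixed, the verification is a routine cancellation, and one does not even need to pass to the sheafification since the presheaf map is visibly compatible.
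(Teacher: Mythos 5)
Your proof is correct and follows essentially the same route as the paper's: a $T$-point cancellation computation using the functorial description of $\mathbf{S}(F)$ and the identification of $\phi_L(h)$ with $t_h^*L_T \otimes L_T^{-1}$, followed by observing that tensoring with $L^{-1}$ gives the inverse. The only cosmetic difference is that the paper writes the $\hat{A}$-component as $\mathscr{P}_f$ rather than an abstract $\LL$, which does not change the argument.
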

\begin{proof}
 Let $T$ be a scheme, and let $(h,f) \in A(T) \times \hat{A}(T)$ be such that 
 $$t_h^*F_T \otimes \mathscr{P}_{f}^{-1} \simeq F_T.$$
 Then
 $$t_h^*(F\otimes L)_T \otimes \mathscr{P}_{\phi_L(h)+f}^{-1}\simeq (F\otimes L)_T,$$
  so that $f_L$ restricts to a morphism $\mathbf{S}(F) \to \mathbf{S}(F\otimes L)$.  Tensoring with $L^{-1}$ induces the inverse map.
 
\end{proof}

\begin{lem} \label{1 l: Poincare S(F)}
Let $A$ be an abelian variety over an algebraically closed field $k$.  Let $F \in \Db(A)$ be a simple object.  Then the isomorphism
$$\begin{pmatrix}
    0 & -1 \\
    1 & 0
\end{pmatrix}:A \times \hat{A} \to \hat{A} \times A$$
restricts to an isomorphism $\mathbf{S}(F) \to \mathbf{S}(\Phi_{\mathscr{P}}(F))$.
\end{lem}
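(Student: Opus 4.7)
The plan is to deduce the result from the fundamental intertwining identities between $\Phi_{\mathscr{P}}$, translation, and tensoring with line bundles in $\text{Pic}^0$, applied in families of $T$-points. By \Cref{1 r: MukaiNotation}, $\mathbf{S}(F)$ is the fppf sheafification of the presheaf whose $T$-points are pairs $(a,\alpha) \in A(T) \times \hat{A}(T)$ satisfying $t_a^* F_T \otimes \mathscr{P}_\alpha^{-1} \simeq F_T$, and analogously for $\mathbf{S}(\Phi_{\mathscr{P}}(F)) \subset \hat{A} \times \hat{\hat{A}} \simeq \hat{A} \times A$ (using that the Poincaré bundle on $\hat{A} \times \hat{\hat{A}}$ is obtained from $\mathscr{P}$ by swapping factors). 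Since the matrix in the statement is a homomorphism of group schemes, it suffices to verify on such test points that for every $(a,\alpha) \in \mathbf{S}(F)(T)$, the pair $(-\alpha, a)$ satisfies $t_{-\alpha}^* \Phi_{\mathscr{P}}(F)_T \otimes \mathscr{P}_{\hat{a}}^{-1} \simeq \Phi_{\mathscr{P}}(F)_T$ (after possibly passing to an fppf cover), and conversely.

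For the forward inclusion one invokes the two Mukai intertwiners
\begin{equation*}
\Phi_{\mathscr{P}}(t_a^* F_T) \simeq \Phi_{\mathscr{P}}(F)_T \otimes \mathscr{P}_{\hat{a}}^{-1}, \qquad \Phi_{\mathscr{P}}(F_T \otimes \mathscr{P}_\alpha) \simeq t_\alpha^* \Phi_{\mathscr{P}}(F)_T,
\end{equation*}
both standard consequences of the theorem of the square applied to $\mathscr{P}$ together with the projection formula, along with flat base change for $\Phi_{\mathscr{P}}$. Applying $\Phi_{\mathscr{P}}$ to the relation $t_a^* F_T \otimes \mathscr{P}_\alpha^{-1} \simeq F_T$, combining the two intertwiners, and using that $\mathscr{P}_{\hat{a}} \in \text{Pic}^0(\hat{A})$ is translation invariant (so $t_{-\alpha}^* \mathscr{P}_{\hat{a}} \simeq \mathscr{P}_{\hat{a}}$) yields the required isomorphism $t_{-\alpha}^* \Phi_{\mathscr{P}}(F)_T \otimes \mathscr{P}_{\hat{a}}^{-1} \simeq \Phi_{\mathscr{P}}(F)_T$.

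For the converse inclusion one runs the same argument with $\Phi_{\mathscr{P}_{\hat{A}}} \colon \Db(\hat{A}) \to \Db(A)$ in place of $\Phi_{\mathscr{P}}$, applied to $\Phi_{\mathscr{P}}(F)$; this produces a matrix map on stabilizers in the opposite direction, and Mukai's inversion theorem $\Phi_{\mathscr{P}_{\hat{A}}} \circ \Phi_{\mathscr{P}} \simeq [-1]^*[-g]$ identifies the target with $\mathbf{S}([-1]^* F)$, which equals $\mathbf{S}(F)$ because $\mathbf{S}(F)$ is a subgroup stable under the automorphism of $A \times \hat{A}$ induced by $[-1]$. Tracking the two successive matrices shows they compose to this automorphism, so the original restriction is an isomorphism onto $\mathbf{S}(\Phi_{\mathscr{P}}(F))$. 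The main obstacle is keeping signs straight throughout: aligning the conventions of the Mukai intertwiners, the canonical identification $\hat{\hat{A}} \simeq A$, and the Poincaré bundle on $\hat{A} \times A$ so that the composed map is exactly $\begin{pmatrix} 0 & -1 \\ 1 & 0 \end{pmatrix}$ rather than a variant twisted by $[-1]$.
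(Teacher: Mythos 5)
Your forward direction matches the paper's argument: both invoke the Mukai intertwining relations (the paper packages them as the single identity $\Phi_{\mathscr{P}_T}(t_h^*G \otimes \mathscr{P}_f)\simeq t_{f}^*\Phi_{\mathscr{P}_T}(G)\otimes \mathscr{P}^{-1}_{\hat{h}}$, which is your two intertwiners composed) together with base-change compatibility of $\Phi_{\mathscr{P}}$ and a sheafification step. Where you diverge is in the converse. The paper observes that, because $\Phi_{\mathscr{P}}$ is a derived equivalence, one automatically has $t_h^*F_T \otimes \mathscr{P}_{f}^{-1} \simeq F_T$ \emph{if and only if} $t_{-f}^*\Phi_{\mathscr{P}}(F)_T \otimes\mathscr{P}_{\hat{h}}^{-1} \simeq \Phi_{\mathscr{P}}(F)_T$: fully faithfulness gives the reverse implication for free, with no extra work. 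You instead rerun the whole argument with $\Phi_{\mathscr{P}_{\hat{A}}}$, invoke Mukai's inversion theorem $\Phi_{\mathscr{P}_{\hat{A}}} \circ \Phi_{\mathscr{P}} \simeq [-1]^*[-g]$, and then need the auxiliary fact that $\mathbf{S}([-1]^*F)=\mathbf{S}(F)$ (which does hold, since $[-1]^*$ conjugates the action by $\mathrm{diag}(-1,-1)$ and subgroups are stable under inversion). This is correct but introduces exactly the convention-tracking difficulties you flag at the end — you must verify the two matrices compose to $\mathrm{diag}(-1,-1)$ rather than something off by a sign. The paper's one-line observation that $\Phi_{\mathscr{P}}$ reflects isomorphisms sidesteps all of that and is the cleaner route; it is worth internalizing as the standard trick for transporting stabilizer conditions along any derived equivalence.

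Two minor points worth flagging in your forward argument. First, the translation invariance $t_{-\alpha}^*\mathscr{P}_{\hat{a}}\simeq\mathscr{P}_{\hat{a}}$ holds over a geometric point but over a general base $T$ only up to twist by a line bundle pulled back from $T$; this is harmless because $\mathbf{S}(F)$ is by definition a sheafification and such twists vanish fppf-locally, but you should say so. Second, when you identify $\mathbf{S}(\Phi_{\mathscr{P}}(F))$ inside $\hat A\times\hat{\hat A}\simeq\hat A\times A$, the identification $\hat{\hat A}\simeq A$ must be the biduality isomorphism compatible with the chosen normalization of $\mathscr{P}$ on $\hat A\times A$; this is one of the conventions you correctly note must be pinned down for the resulting matrix to come out as $\begin{pmatrix}0&-1\\1&0\end{pmatrix}$ on the nose.
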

\begin{proof}
   By repeating Mukai's original argument (see \cite[(3.1)]{Mu78}) one sees that for any scheme $T$, any $(h,f) \in A(T) \times \hat{A}(T)$, and any $G \in D(\text{Mod}({\mathcal{O}_{A_T}}))$, 
    $$\Phi_{\mathscr{P}_T}(t_h^*G \otimes \mathscr{P}_f)\simeq t_{f}^*\Phi_{\mathscr{P}_T}(F)\otimes \mathscr{P}^{-1}_{\hat{h}}.$$
    Since the formation of $\Phi_\mathscr{P}$ commutes with base change (see \cite[Proposition 1.3]{Mu87}), it follows that for any $(h,f)$,
    $$t_h^*F_T \otimes \mathscr{P}_{f}^{-1} \simeq F_T \iff t_{-f}^*\Phi_{\mathscr{P}}(F)_T \otimes\mathscr{P}_{\hat{h}}^{-1} \simeq \Phi_{\mathscr{P}}(F)_T$$
    The desired isomorphism follows from sheafifying.
    
\end{proof}

\begin{proof}[Proof of \Cref{1 p:ext}]
    We may reduce to the case where $k$ is algebraically closed and where $F$ is a simple sheaf on $A$.  Our strategy is to reduce to the case where $F$ is locally free, where the result has already been proven (\cite[Proposition 3.16, Theorem 5.8]{Mu78}).  The fibers of the family $\mathscr{P} \otimes p_1^*F$ of sheaves on $A$ have uniformly bounded Castelnuovo-Mumford regularity, so by cohomology and base change we obtain that for any sufficiently ample invertible sheaf $L$ on $A$, $\widehat{F \otimes L}:=\Phi_\mathscr{P}(F\otimes L)$ is locally free.  Moreover, 
    \begin{equation}  
    \text{dim}(\text{Ext}^1(\widehat{F \otimes L},\widehat{F \otimes L}))=\text{dim}(\text{Ext}^1(F\otimes L, F \otimes L))=\text{dim}(\text{Ext}^1(F,F)).
        \end{equation}
    By \Cref{1 l: Poincare S(F)} and \Cref{1 l: tensor S(F)}, we also have   \begin{equation}(\mathbf{S}{(\widehat{F \otimes L} }))\simeq (\mathbf{S}(F \otimes L) ) \simeq  (\mathbf{S}(F)).\end{equation}
    (1) and (2) allow us to apply \cite[Proposition 3.16, Theorem 5.8]{Mu78} to obtain the desired result.
\end{proof}
\begin{cor}{\label{1 c: S(F) is abelian variety}}
Let $F$ be a simple semi-homogeneous sheaf on $X$.  Then $\mathbf{S}(F)$ is a $g$-dimensional abelian variety.
    
\end{cor}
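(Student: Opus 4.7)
Following the strategy of the proof of \Cref{1 p:ext}, I would reduce to the case of a simple semi-homogeneous vector bundle on an abelian variety, where the result is due to Mukai. By \Cref{1 l: reduction}, after base changing to $\overline{k}$ and trivializing $X$, we may assume that $X = A$ over an algebraically closed field. For a sufficiently ample line bundle $L$ on $A$, the Fourier--Mukai transform $\widehat{F \otimes L} := \Phi_{\mathscr{P}}(F \otimes L)$ is locally free by cohomology and base change, and it remains simple because Fourier--Mukai is an equivalence. By \Cref{1 l: tensor S(F)} and \Cref{1 l: Poincare S(F)}, we have isomorphisms
\[
\mathbf{S}(F) \simeq \mathbf{S}(F \otimes L) \simeq \mathbf{S}(\widehat{F \otimes L}),
\]
so $\widehat{F \otimes L}$ is also semi-homogeneous, and the statement is reduced to the case of a simple semi-homogeneous vector bundle on an abelian variety, which is \cite[Proposition 3.16, Theorem 5.8]{Mu78}.

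An alternative and more self-contained route to smoothness uses the tangent-space consequence of \Cref{1 p:ext}. The orbit map $\mu\colon A \times \hat{A} \to \spl$, $(a, L) \mapsto t_a^* F \otimes L^{-1}$, has scheme-theoretic fiber $\mathbf{S}(F)$ over $F$, and its derivative at the identity
\[
d\mu_e\colon T_e(A \times \hat{A}) \longrightarrow T_F \spl = \Ext^1(F, F)
\]
has image equal to the tangent space of the $g$-dimensional orbit, hence of dimension $g$. Since \Cref{1 p:ext} gives $\dim \Ext^1(F, F) = g$, the map $d\mu_e$ is surjective, so $\dim T_e \mathbf{S}(F) = g = \dim \mathbf{S}(F)$, which proves smoothness at the identity and therefore everywhere by translation. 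The main obstacle is not smoothness but connectedness of $\mathbf{S}(F)$: a $g$-dimensional smooth closed subgroup of an abelian variety can a priori have several components, and ruling these out requires the finer structural analysis of the stabilizer of a simple semi-homogeneous vector bundle carried out by Mukai, which is why I expect the reduction to the locally free case to be the most efficient route.
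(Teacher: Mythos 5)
Your first paragraph matches the paper's proof exactly: reduce to the case of an abelian variety over $\overline{k}$, tensor with a sufficiently ample line bundle and apply $\Phi_{\mathscr{P}}$ to obtain a simple semi-homogeneous vector bundle with the same stabilizer, then invoke Mukai. The one slip is the citation at the end: \cite[Proposition 3.16, Theorem 5.8]{Mu78} concerns the dimension of $\Ext^1(E,E)$ (and is what is used in \Cref{1 p:ext}), whereas the statement you need — that the stabilizer of a simple semi-homogeneous vector bundle on an abelian variety is a $g$-dimensional abelian variety — is \cite[Proposition 7.7]{Mu78}, which is what the paper cites here. Your second paragraph is a reasonable side discussion; you correctly identify that the tangent-space bound from \Cref{1 p:ext} yields smoothness of $\mathbf{S}(F)$ at the identity but does not on its own rule out multiple components, which is precisely why the reduction to Mukai's structural result for vector bundles is needed.
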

\begin{proof}
    We may reduce to the case where $F$ is a simple semi-homogeneous sheaf on an abelian variety over an algebraically closed field.  In this case, we've shown in the proof of \Cref{1 p:ext} that if $F$ is a simple semi-homogeneous sheaf on $A$, then there exists a simple semi-homogeneous vector bundle $E$ on $\hat{A}$ such that $\mathbf{S}(F) \simeq \mathbf{S}(E)$, and by \cite[Proposition 7.7]{Mu78}, $\mathbf{S}(E)$ is a $g$-dimensional abelian variety.
\end{proof}

\begin{cor}\label{1 c: base change}
    Let $F$ be a simple sheaf on $X$.  Then $F$ is semi-homogeneous if and only if $F_{\overline{k}}$ is.
\end{cor}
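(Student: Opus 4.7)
The plan is to show that the formation of the stabilizer commutes with base change to $\overline{k}$, i.e.\ that there is a natural identification $\mathbf{S}(F)_{\overline{k}} \simeq \mathbf{S}(F_{\overline{k}})$, and then to invoke the fact that the dimension of a finite type group scheme is preserved under extension of the base field. Once these two ingredients are assembled, the statement is immediate: $\mathbf{S}(F)$ is $g$-dimensional if and only if $\mathbf{S}(F)_{\overline{k}}$ is if and only if $\mathbf{S}(F_{\overline{k}})$ is.

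To establish the base change compatibility I would use the Cartesian description of $\mathbf{S}(F)$ given in \Cref{1 r: MukaiNotation}:
$$
\begin{tikzcd}
\mathbf{S}(F) \arrow[d] \arrow[r] & \spl \arrow[d, "\Delta"] \\
A \times \hat{A} \arrow[r]        & \spl \times \spl
\end{tikzcd}
$$
Since fiber products commute with base change, it suffices to observe that each corner of this square is compatible with base change. For $A \times \hat{A}$, this is because the formation of the dual abelian variety commutes with extension of the base field. For $\spl$, this follows from the fact that both the moduli stack of simple sheaves and the rigidification along $\Gm$ are defined functorially and commute with arbitrary base change (see \cite[Section 5]{LO15}). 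The bottom horizontal map is given by $(h,\LL) \mapsto (t_h^* F_T \otimes \LL^{-1}, F_T)$, whose formation obviously commutes with base change. Hence the base change of the square above to $\overline{k}$ is precisely the analogous Cartesian square defining $\mathbf{S}(F_{\overline{k}})$, giving the desired identification.

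The only real subtlety is to make sure one is genuinely taking the \emph{same} sheaf-theoretic construction on both sides; once the four corners are identified compatibly this is automatic, and no dimension computation or deformation-theoretic input is required. With $\mathbf{S}(F)_{\overline{k}} \simeq \mathbf{S}(F_{\overline{k}})$ in hand, the corollary follows at once from the invariance of the dimension of a finite type $k$-group scheme under field extension.
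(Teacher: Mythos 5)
Your proof is correct, but it takes a different route than the paper intends. The paper's proof reads simply ``Immediate,'' and the intended argument almost certainly runs through the Ext characterization in \Cref{1 p:ext}, which was just established: $F$ is semi-homogeneous if and only if $\dim\Ext^1(F,F) = g$, and the dimension of $\Ext$ groups is preserved under flat base change (in particular under the field extension $k \to \overline{k}$, since $\Ext^1(F_{\overline{k}},F_{\overline{k}}) \simeq \Ext^1(F,F) \otimes_k \overline{k}$). That is a one-line deduction from the result immediately preceding this corollary, which explains the label ``Immediate.''

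Your approach instead works directly from the definition, showing that $\mathbf{S}(F)$ commutes with base change by observing that each corner of the Cartesian square in \Cref{1 r: MukaiNotation} does so, and then invoking invariance of dimension under field extension. This is perfectly sound, and arguably more structural, since it establishes the stronger statement $\mathbf{S}(F)_{\overline{k}} \simeq \mathbf{S}(F_{\overline{k}})$ rather than just the numerical conclusion. The trade-off is that it requires verifying several base change compatibilities (for $\hat{A} \simeq \Pic^0_{X/k}$, for the moduli space $\spl$, and for the rigidification along $\Gm$), each of which is standard but adds up to more bookkeeping than the paper's intended one-step deduction from $\Ext^1$. Either route is valid; the paper's is shorter precisely because it capitalizes on the numerical criterion just proved.
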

\begin{proof}
    Immediate.
\end{proof}
Note that by \Cref{1 c: base change}, our definition of semi-homogeneity is equivalent to that of \cite{dJO22}.
\begin{ex}\label{1 e: basic examples} 
Let $L$ be a line bundle on $X$.  Then $\mathbf{S}(F)$ is semi-homogeneous.  Its stabilizer is the graph of the symmetric homomorphism $\phi_L:A \to \hat{A}$ which sends a point $a \in A(k)$ to the image of $t_a^*L \otimes L^{-1} \in \text{Pic}^0_{X/k}(k)$ under the canonical isomorphism $\text{Pic}^0_{X/k} \simeq \hat{A}$. 
\end{ex}
Using \Cref{1 p:ext}, we obtain that the stack $\Ssh$ of simple semi-homogeneous sheaves is an open substack of $\Spl$.  We are mainly interested in the substacks $\Sshx$ whose objects are those families whose geometric fibers have Chern characters equal to (the pullback of) $\xi \in \text{CH}_{\text{num}}(X_{\overline{k}})_{\mathbb{Q}}$.  When they are nonempty, these stacks are $\mathbb{G}_m$-gerbes over algebraic spaces which we denote by $\sshx$. 

For the remainder of this section, we fix some ${\xi} \in \text{CH}_{\text{num}}(X_{\overline{k}})_{\mathbb{Q}}$ such that $\Sshx$ is nonempty.  It's easy to see that the action of $A \times \hat{A}$ on $\spl$ restricts to an action on $\ssh$.  We will show that we can restrict further to an action on $\sshx$ and that this action makes $\sshx$ a into a torsor under an abelian variety.

\begin{lem}\label{1 l: action and geometric points}
  Let $A$ be an abelian variety over an algebraically closed field $k$.  Let $F$ and $G$ be simple semi-homogeneous sheaves on $A$.  Then $F$ and $G$ have the same numerical Chern character if and only if there exists $(a,\alpha) \in A(k) \times \hat{A}(k)$ such that
  $$G \simeq t^*_{a}F \otimes \mathscr{P}^{-1}_{\alpha}.$$
\end{lem}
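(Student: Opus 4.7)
The \emph{if} direction is essentially formal. Translation $t_a$ on $A$ is algebraically equivalent to the identity (via the family of translations parametrized by $A$), so it acts trivially on $\mathrm{CH}_{\mathrm{num}}(A)_{\mathbb{Q}}$; hence $\mathrm{ch}^{\mathrm{num}}(t_a^*F)=\mathrm{ch}^{\mathrm{num}}(F)$. Since $\mathscr{P}_\alpha \in \mathrm{Pic}^0(A)$ is numerically trivial, $\mathrm{ch}(\mathscr{P}_\alpha^{-1})=1$ in $\mathrm{CH}_{\mathrm{num}}(A)_{\mathbb{Q}}$, and multiplicativity of $\mathrm{ch}$ then gives $\mathrm{ch}^{\mathrm{num}}(t_a^*F \otimes \mathscr{P}_\alpha^{-1})=\mathrm{ch}^{\mathrm{num}}(F)$.

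For the \emph{only if} direction, the plan is to reduce to the case of simple semi-homogeneous \emph{vector bundles} and then invoke Mukai's classification of their $A\times\hat{A}$-orbits. Mimicking the proof of \Cref{1 p:ext}, I choose a sufficiently ample line bundle $L$ on $A$ so that both $F':=\Phi_\mathscr{P}(F\otimes L)$ and $G':=\Phi_\mathscr{P}(G\otimes L)$ are locally free, and \Cref{1 l: tensor S(F)} together with \Cref{1 l: Poincare S(F)} guarantee that $F'$ and $G'$ remain simple semi-homogeneous. Two reductions are then needed: (i) $F'$ and $G'$ still have equal numerical Chern character, and (ii) exhibiting an $(a,\alpha)$ relating $F$ and $G$ is equivalent to exhibiting an analogous pair relating $F'$ and $G'$. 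Claim (i) holds because tensoring with $L$ and applying $\Phi_\mathscr{P}$ both induce fixed $\mathbb{Q}$-linear automorphisms of numerical Chow (the first by multiplication with $\mathrm{ch}(L)$, the second by Grothendieck--Riemann--Roch for the projections). Claim (ii) is the orbit-level analogue of the stabilizer identifications in \Cref{1 l: tensor S(F)} and \Cref{1 l: Poincare S(F)}; the same calculations used there, now applied to isomorphism classes rather than merely to stabilizers, produce the required orbit correspondence.

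Once the problem is reduced to a pair of simple semi-homogeneous vector bundles $E_1, E_2$ with equal numerical Chern character, I would finish by appealing to Mukai's classification in \cite{Mu78}: simple semi-homogeneous vector bundles on $A$ with fixed rank and numerical $c_1$ form a single $A\times\hat{A}$-orbit. Since rank and $c_1$ are the two lowest graded components of $\mathrm{ch}^{\mathrm{num}}$, the hypothesis delivers exactly this orbit equivalence. The main obstacle I anticipate is Claim (ii): the symplectic automorphisms from \Cref{1 l: tensor S(F)} and \Cref{1 l: Poincare S(F)} must be promoted from identifications of stabilizers to bijections of orbits, which requires careful bookkeeping of how the $A\times\hat{A}$-action transforms under tensoring with $L$ and under $\Phi_\mathscr{P}$, but introduces no essential new idea beyond Mukai's original computation at \cite[(3.1)]{Mu78}.
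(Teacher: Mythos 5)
Your proof is correct and follows essentially the same route as the paper: both reduce to the locally free case by passing to $\Phi_\mathscr{P}(F\otimes L)$ for $L$ sufficiently ample, verify that this operation preserves the two hypotheses (equality of numerical Chern characters and membership in the same $A\times\hat{A}$-orbit, via the same calculations as in \Cref{1 l: tensor S(F)} and \Cref{1 l: Poincare S(F)}), and then invoke \cite[Theorem 7.11(2)]{Mu78}.
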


\begin{proof}
When $F$ and $G$ are locally free, this follows immediately from \cite[Theorem 7.11(2)]{Mu78}.  We will reduce to this case using the Fourier transform.  First note that we may replace $F$ and $G$ by $F \otimes L$ and $G \otimes L$ for any line bundle $L$.  Choosing $L$ sufficiently ample, we may assume that $\Phi_{\mathscr{P}}(F)$ and $\Phi_{\mathscr{P}}(G)$ are locally free.  It's clear that $\Phi_{\PP}(F)$ and $\Phi_{\PP}(G)$ have the same numerical Chern character if and only if $F$ and $G$ do, and arguing exactly as in \Cref{1 l: Poincare S(F)}, we see that
$t_a^*F \otimes \PP_{\alpha}^{-1} \simeq F$ if and only if $t_{-\alpha}^*\Phi_\PP(F)\otimes \PP_{\hat{a}}^{-1} \simeq \Phi_\PP(F).$
\end{proof}

By \Cref{1 l: action and geometric points}, we see that $A \times \hat{A}$ acts on $\sshx$ by restriction.

\begin{defn}
Let $A \times \hat{A}$ act on $\sshx$ as above.  We define
$$\mathbf{S}(\xi):= \text{Ker}\left((A \times \hat{A}) \to \text{Aut}(\sshx) \right)$$
and
$$A(\xi) := (A \times \hat{A})/\mathbf{S}(\xi).$$    
\end{defn} 

\begin{prop}\label{1 p:torsor}
   $\sshx$ is a torsor under $A(\xi)$.
\end{prop}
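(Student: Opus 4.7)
The plan is to identify $\mathbf{S}(\xi)$ as the common stabilizer of every geometric point of $\sshx$, then use \Cref{1 l: action and geometric points} together with a tangent-space computation to show that the orbit map based at a chosen geometric point gives an isomorphism $A(\xi)_{\bar k} \xrightarrow{\sim} \sshx_{\bar k}$; the torsor property will then follow from fppf descent.

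First I would show that for any two simple semi-homogeneous sheaves $F, G$ on $X_{\bar k}$ with Chern character $\xi$, the stabilizers $\mathbf{S}(F)$ and $\mathbf{S}(G)$ coincide. By \Cref{1 l: reduction} we may work on $A_{\bar k}$, and by \Cref{1 l: action and geometric points} we have $G \simeq t_a^* F \otimes \mathscr{P}_\alpha^{-1}$ for some $(a, \alpha) \in A(\bar k) \times \hat{A}(\bar k)$. For $(h, L) \in \mathbf{S}(F)$, translation invariance of line bundles in $\text{Pic}^0$ gives $t_h^* G \otimes L^{-1} \simeq t_a^*(t_h^* F \otimes L^{-1}) \otimes \mathscr{P}_\alpha^{-1} \simeq G$, so $(h, L) \in \mathbf{S}(G)$, and by symmetry the two are equal. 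Since $\mathbf{S}(\xi)$ is the kernel of the action of $A \times \hat A$ on $\sshx$, it equals $\bigcap_F \mathbf{S}(F)$; hence $\mathbf{S}(\xi) = \mathbf{S}(F)$ for every geometric point $F$, and by \Cref{1 c: S(F) is abelian variety} this common stabilizer is a $g$-dimensional abelian variety. In particular $A(\xi)$ is a $g$-dimensional abelian variety.

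Next I would fix a geometric point $F_0 \in \sshx(\bar k)$. Since $\mathbf{S}(\xi) \subseteq \mathbf{S}(F_0)$, the orbit map factors through an $A(\xi)$-equivariant morphism $\psi: A(\xi)_{\bar k} \to \sshx_{\bar k}$. It is surjective on $\bar k$-points by \Cref{1 l: action and geometric points}, and injective on $\bar k$-points because each fiber is a single coset of $\mathbf{S}(\xi)$ in $A \times \hat A$. At the identity, the differential of the orbit map $A \times \hat A \to \sshx$ has kernel equal to the Lie algebra of $\mathbf{S}(F_0) = \mathbf{S}(\xi)$, so the induced differential $T_e A(\xi) \to T_{F_0} \sshx = \text{Ext}^1(F_0, F_0)$ is injective; by \Cref{1 p:ext} both sides have dimension $g$, so this differential is an isomorphism. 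Equivariance propagates étaleness from the identity across $A(\xi)_{\bar k}$, and a universally bijective étale morphism is an isomorphism, so $\psi$ is an isomorphism.

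Finally, since the action map $\alpha: A(\xi) \times \sshx \to \sshx \times \sshx$ is defined over $k$ and becomes an isomorphism after base change to $\bar k$ (via $\psi$ on each factor), fppf descent gives that $\alpha$ is an isomorphism over $k$; combined with the non-emptiness of $\sshx$, this yields the $A(\xi)$-torsor structure. The main obstacle is the tangent-level analysis: one must carefully identify $T_{F_0} \sshx$ with $\text{Ext}^1(F_0, F_0)$, passing through the $\mathbb{G}_m$-rigidification $\Sshx \to \sshx$, and justify the equivariant propagation of étaleness; once these are in place the remaining verifications are formal.
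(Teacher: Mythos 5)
Your proposal is essentially correct, but it replaces the paper's key technical step with a different one. The paper's proof also picks a geometric point $F$ and studies the orbit map, but the heavy lifting is done by \Cref{1 l:group actions}: that lemma uses generic flatness (translated around by the group) to show that a surjective orbit map from a smooth commutative group over an algebraically closed field induces $G/\mathrm{stab}(x)\simeq X_{\mathrm{red}}$. Applied once, this pins down $\dim \sshx = g$; combined with \Cref{1 p:ext} this gives regularity (hence reducedness), and a second application of \Cref{1 l:group actions} then delivers the isomorphism $A(\xi)_{k'}\simeq\sshx_{k'}$. Your tangent-space/étale argument short-circuits this: you compute that the differential of the orbit map has kernel $\mathrm{Lie}(\mathbf{S}(F_0))$, and combine this with \Cref{1 p:ext} to show the differential on $A(\xi)$ is an isomorphism, then appeal to equivariance and bijectivity. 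The cost of your route is exactly the gap you flag: to conclude étaleness at the identity from an isomorphism of tangent spaces, you need to know $\sshx$ is smooth at $F_0$, which is not automatic. It does follow from what you have — bijectivity of $\psi$ forces $\dim\sshx = g$, and since $\dim T_{F_0}\sshx = g$ by \Cref{1 p:ext}, the local ring is regular — but this should be spelled out; the paper's route via \Cref{1 l:group actions} has the small advantage of returning an isomorphism with $X_{\mathrm{red}}$ unconditionally, so it never needs to worry about reducedness prematurely.

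Two smaller points. Your opening paragraph proving $\mathbf{S}(F)=\mathbf{S}(G)$ directly is sound (the translation-invariance computation is fine) but is not in the paper's proof — the paper obtains $\mathbf{S}(F)=\mathbf{S}(\xi)$ as a \emph{consequence} of identifying the orbit map with the quotient map, and records the stabilizer equality only afterward as a corollary; either order works. Also, the paper allows the chosen point to live over a possibly large field $k'$ and therefore phrases the descent step in terms of the fpqc topology before upgrading to étale via smoothness; your fppf descent from $\overline k$ is fine here since $\sshx$ is of finite type and nonempty, so it has $\overline k$-points, but it is worth noting that the paper is being slightly more careful about which topology is used at which stage.
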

 We need the following preliminary lemmas.

\begin{lem}\label{1 l:group actions}
    Let $G$ be a smooth, commutative group scheme of finite type over an algebraically closed field $k$, and let $G$ act on an algebraic space $X$ which is locally of finite type over $k$.  Let $x \in X(k)$, and suppose that the orbit map
    $$\tau_x:G \mapsto X, \quad g \mapsto xg$$
    is surjective.  Then $\tau_x$ induces an isomorphism 
    $$G/\text{stab}(x) \simeq X_{\text{red}}.$$
    \begin{proof}
        Since $G$ is reduced, the scheme-theoretic image of $\tau_{x}$ is $X_{\text{red}}$.  Denote by $\tau_{x,\text{red}}:G \to X_{\text{red}}$ the induced map.  By generic flatness, there exists an open subspace $U \subseteq X_{\text{red}}$ such that the restriction $\tau_{x,\text{red}}^{-1}(U) \to U$ is faithfully flat.  Taking the union over the translates of this map, we obtain a faithfully flat morphism
        $$G=\bigcup_{g \in G(k)} \tau_{x,\text{red}}^{-1}(U)\cdot g \to 
        \bigcup_{g \in G(k)} U\cdot g = X_{\text{red}},$$
        which necessarily factors through the desired isomorphism.
    \end{proof}
\end{lem}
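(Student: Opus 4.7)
The plan is to show that the induced morphism $\tau_{x,\text{red}}: G \to X_{\text{red}}$ is a faithfully flat $\text{stab}(x)$-torsor, which will identify $X_{\text{red}}$ with the fppf quotient $G/\text{stab}(x)$. Since $G$ is smooth and hence reduced, $\tau_x$ factors uniquely through $X_{\text{red}} \hookrightarrow X$, yielding $\tau_{x,\text{red}}$, which inherits the surjectivity of $\tau_x$.

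The key step is faithful flatness of $\tau_{x,\text{red}}$. By generic flatness (applied after passing to a smooth cover of $X_{\text{red}}$ if needed), there exists a dense open $U \subseteq X_{\text{red}}$ over which $\tau_{x,\text{red}}$ is flat. Now $\tau_{x,\text{red}}$ is equivariant for the right translation action of $G$ on itself and the restricted $G$-action on $X_{\text{red}}$, so flatness propagates to every translate $U \cdot g$ with $g \in G(k)$. Because $k$ is algebraically closed and $\tau_x$ is surjective on $k$-points, the translates $\{U \cdot g\}_{g \in G(k)}$ form an open cover of $X_{\text{red}}$, giving flatness globally; combined with surjectivity, $\tau_{x,\text{red}}$ is faithfully flat.

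To conclude, I would verify the torsor structure. The morphism $G \times \text{stab}(x) \to G \times_{X_{\text{red}}} G$ sending $(g,s)$ to $(g, gs)$ is an isomorphism, since any pair $(g_1, g_2)$ with $\tau_{x,\text{red}}(g_1) = \tau_{x,\text{red}}(g_2)$ forces $g_1^{-1}g_2 \in \text{stab}(x)$ (using commutativity to move the action to the right, though a one-sided statement suffices). Hence $\tau_{x,\text{red}}$ exhibits $X_{\text{red}}$ as the fppf sheaf quotient $G/\text{stab}(x)$, which exists as an algebraic space by standard descent and is naturally isomorphic to $X_{\text{red}}$ via the induced map. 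The main subtlety is handling generic flatness and the fppf quotient in the algebraic-space setting rather than for schemes, but these are routine once a smooth cover is fixed; everything else is pure group action bookkeeping.
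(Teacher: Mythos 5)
Your proposal is correct and follows essentially the same route as the paper: factor through $X_{\text{red}}$ using reducedness of $G$, obtain flatness over a dense open via generic flatness, propagate it by translation using surjectivity on $k$-points over the algebraically closed field, and identify $X_{\text{red}}$ with $G/\text{stab}(x)$ via the resulting faithfully flat orbit map. The only difference is that you spell out the torsor/fppf-quotient verification that the paper compresses into "necessarily factors through the desired isomorphism."
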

\begin{proof}[Proof of \Cref{1 p:torsor}]
    Since $\Sshx$ is nonempty, we can find an algebraically closed field extension $k'/k$ and $k'$-point $F \in \Sshx(k')$.  It suffices to show that the orbit map
    \begin{equation}
        \tau_{F}:(A \times \hat{A})_{k'} \to \text{SSH}^\xi_{X/k,k'}, \quad (a, \alpha) \mapsto t_{a}^*F \otimes \mathscr{P}_{\alpha}^{-1}
    \end{equation}
    is the quotient by $\mathbf{S}(F)$.  Indeed, this implies that $\mathbf{S}(F)=\mathbf{S}(\xi)_{k'}$ so that (3) yields an $A(\xi)_{k'}$-equivariant isomorphism
    $$A(\xi)_{k'} \simeq \text{SSH}^\xi_{X/k,k'}.$$
    This will show that $\sshx$ is an $A(\xi)$-torsor for the fpqc topology. Since $A(\xi)_{k'}$ will be smooth, $\sshx$ will be too, so it will follow that $\sshx$ is an $A(\xi)$-torsor for the étale topology.
    
    After base change, we may assume that $k=k'$.  \Cref{1 l: action and geometric points} shows that $\tau_{F}$ is surjective.  \Cref{1 l:group actions} implies that $\sshx$ is a $g$-dimensional scheme.  By \Cref{1 p:ext}, we see that $\sshx$ is regular, and therefore reduced, so \Cref{1 l:group actions} gives the desired result.
\end{proof}
\begin{cor}
    $\mathbf{S}(F)=\mathbf{S}({\text{ch}(F)})$ for any simple semi-homogeneous sheaf $F$ on $X$.  In particular, $\mathbf{S}(\xi)$ and $A(\xi)$ are $g$-dimensional abelian varieties
\end{cor}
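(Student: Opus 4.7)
The plan is to read off both assertions directly from the torsor property established in \Cref{1 p:torsor}, so no serious computation is required; the only point that needs a touch of care is ensuring that the identification $\mathbf{S}(F) = \mathbf{S}(\xi)$ holds for \emph{every} simple semi-homogeneous sheaf $F$ with geometric Chern character $\xi$, not just the particular one used inside the proof of \Cref{1 p:torsor}.

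For the first assertion, I would set $\xi = \text{ch}(F_{\overline{k}})$ and view $F$ as a $k$-point of $\sshx$. One inclusion $\mathbf{S}(\xi) \subseteq \mathbf{S}(F)$ is tautological from the definition of $\mathbf{S}(\xi)$ as the kernel of $A \times \hat{A} \to \text{Aut}(\sshx)$: anything acting trivially on all of $\sshx$ in particular fixes $F$. For the reverse inclusion, I would use that the $A\times \hat{A}$-action on $\sshx$ factors through the quotient $A \times \hat{A} \to A(\xi)$, together with \Cref{1 p:torsor}, which says the resulting $A(\xi)$-action is free (as any torsor action is free in the fppf sense). If $(a,\alpha) \in (A\times \hat{A})(T)$ stabilizes $F_T$, then its image in $A(\xi)(T)$ stabilizes $F_T$, so by freeness this image is trivial, i.e.\ $(a,\alpha) \in \mathbf{S}(\xi)(T)$. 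Sheafifying gives $\mathbf{S}(F) \subseteq \mathbf{S}(\xi)$. (Equivalently, one can invoke the equality $\mathbf{S}(F) = \mathbf{S}(\xi)_{k'}$ obtained after base change inside the proof of \Cref{1 p:torsor} and descend.)

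For the second assertion, the first part together with \Cref{1 c: S(F) is abelian variety} tells us that $\mathbf{S}(\xi) = \mathbf{S}(F)$ is a $g$-dimensional abelian variety. Since $A(\xi)$ is by definition the quotient of the $2g$-dimensional abelian variety $A \times \hat{A}$ by the smooth, connected, closed subgroup $\mathbf{S}(\xi)$, it is itself a $g$-dimensional abelian variety, which completes the proof. There is no real obstacle here; the corollary is essentially a bookkeeping consequence of \Cref{1 p:torsor} and \Cref{1 c: S(F) is abelian variety}.
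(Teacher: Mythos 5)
Your proof is correct and takes essentially the approach the paper intends (the paper states this corollary without proof, as a direct consequence of \Cref{1 p:torsor} and \Cref{1 c: S(F) is abelian variety}). Your fleshed-out argument — one inclusion tautological from the definition of $\mathbf{S}(\xi)$ as a kernel, the other from freeness of the $A(\xi)$-action, alternatively the identity $\mathbf{S}(F)=\mathbf{S}(\xi)_{k'}$ appearing inside the proof of \Cref{1 p:torsor} plus descent — together with the observation that $A(\xi)=(A\times\hat A)/\mathbf{S}(\xi)$ is then a quotient of a $2g$-dimensional abelian variety by a $g$-dimensional abelian subvariety, correctly supplies the bookkeeping the paper leaves implicit.
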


\begin{cor}
    $\sshx$ is a torsor under an abelian variety.
\end{cor}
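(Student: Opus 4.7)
The plan is to simply combine the two results immediately preceding this corollary. By \Cref{1 p:torsor}, $\sshx$ is a torsor under the group scheme $A(\xi)=(A \times \hat{A})/\mathbf{S}(\xi)$, so all that remains is to identify $A(\xi)$ as an abelian variety. But the preceding corollary already asserts precisely that $A(\xi)$ is a $g$-dimensional abelian variety, so there is essentially nothing to do.

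More concretely, I would just write: ``By \Cref{1 p:torsor}, $\sshx$ is a torsor under $A(\xi)$, and by the preceding corollary, $A(\xi)$ is an abelian variety.'' The whole point is that the content has been pushed into the earlier results; the abelian-variety structure on $A(\xi)$ was in turn reduced (via the identification $\mathbf{S}(\xi)=\mathbf{S}(\text{ch}(F))$) to the fact that the stabilizer $\mathbf{S}(F)$ of a simple semi-homogeneous sheaf is an abelian variety, which is \Cref{1 c: S(F) is abelian variety}.

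Since this corollary is purely formal given what came before, there is no real obstacle; if anything, the only thing to double-check is that quotienting the commutative smooth group scheme $A\times\hat{A}$ by the closed subgroup $\mathbf{S}(\xi)$ gives a scheme in the first place, which is standard and has already been used implicitly in the statement of \Cref{1 p:torsor}. So I would present this corollary as a one-line consequence and not belabor the argument further.
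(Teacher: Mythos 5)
Your proposal is correct and matches the paper's intent exactly: the corollary is an immediate consequence of \Cref{1 p:torsor} (which gives the torsor structure under $A(\xi)$) combined with the preceding corollary (which identifies $A(\xi)$ as a $g$-dimensional abelian variety), and the paper indeed leaves the proof implicit for this reason. Nothing further is needed.
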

We finish with a basic example.
\begin{ex}
    Let $\lambda \in \text{NS}_{X/k}(k)$, and let $L$ be a line bundle on $X_{\overline{k}}$ with $[L]=\lambda_{\overline{k}}\in \text{NS}(X_{\overline{k}})$.  Then $\mathcal{SSH}^{\text{ch}(L)}_{X/k} \simeq \mathscr{P}ic^{\lambda}_{X/k}$.

\end{ex}

\section{Moduli Spaces and Derived Equivalences}\label{2 SECTION}
In this section we produce some twisted derived equivalences between torsors under abelian varieties and their moduli spaces of simple semi-homogeneous sheaves.  We also prove the natural converse to this result and show that many twisted derived equivalences between torsors arise in this way.
\begin{thm} \label{2 t : equivalent to ssh}
    Let $A$ be an abelian variety over a field $k$, let $X$ be an $A$-torsor, and let $\xi \in \text{CH}_{\text{num}}(X_{\overline{k}})_{\mathbb{Q}}$, and suppose that $\mathcal{SSH}^\xi_{X/k}$ is nonempty.  Then there is a twisted derived equivalence
    $$\Phi_\EE:\text{D}^b(\text{SSH}^{\xi}_{X/k},\mu^{-1}) \to \text{D}^b(X),$$
    where $\mu\in \text{Br}(\text{SSH}^{\xi}_{X/k})$ is the universal obstruction and $\EE$ is the universal $p_1^*\mu$-twisted sheaf.
\end{thm}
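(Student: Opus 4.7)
The plan is to apply \Cref{A t: criterion} to the universal $p_1^*\mu$-twisted sheaf $\EE$ on $M \times X$, where $M := \sshx$. By \Cref{1 p:torsor} and its corollaries, $M$ is a torsor under a $g$-dimensional abelian variety, so it is smooth and proper of dimension $g := \dim X$. The universal family $\EE$ exists as a $p_1^*\mu$-twisted sheaf because $\Sshx$ is the $\Gm$-gerbe over $M$ whose class is $\mu$, and its geometric fibers along $M$ recover the parameterized simple semi-homogeneous sheaves on $X$.

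The criterion typically requires: (i) each geometric fiber $\EE_m$ is simple; (ii) $\EE_m \otimes \omega_X \simeq \EE_m$ at every geometric point; (iii) $\dim M = \dim X$; and (iv) $\text{Ext}^i(\EE_{m_1}, \EE_{m_2}) = 0$ in $\Db(X_{\overline{k}})$ for all $i$ whenever $m_1 \neq m_2$ are distinct geometric points of $M$. Items (i)--(iii) are immediate: (i) holds by construction of $\Sshx \subseteq \Spl$; (ii) holds because $\omega_X \simeq \mathcal{O}_X$, as $X$ is a torsor under an abelian variety; and (iii) was just noted.

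The main obstacle is the orthogonality condition (iv). Using \Cref{1 l: reduction}, I would first reduce to the case $k = \overline{k}$ and $X = A$, where distinct geometric points of $M$ correspond to non-isomorphic simple semi-homogeneous sheaves $F_1, F_2$ on $A$ with the same numerical Chern character $\xi$. By \Cref{1 l: action and geometric points}, one may write $F_2 \simeq t_a^* F_1 \otimes \mathscr{P}_\alpha^{-1}$ with $(a,\alpha) \notin \mathbf{S}(F_1)(k)$. Schur's lemma gives $\text{Hom}(F_1, F_2) = 0$, and Serre duality together with the triviality of $\omega_A$ then gives $\text{Ext}^g(F_1, F_2) = 0$. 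For the intermediate degrees I would mimic the Fourier--Mukai reduction used in the proof of \Cref{1 p:ext}: after tensoring with a sufficiently ample line bundle $L$, the sheaves $\widehat{F_i \otimes L} := \Phi_{\mathscr{P}}(F_i \otimes L)$ become locally free simple semi-homogeneous bundles on $\hat{A}$, so the equivalence $\Phi_{\mathscr{P}}$ translates the vanishing into the same statement for such bundles, which follows from Mukai's structure theory in \cite{Mu78} (which realizes simple semi-homogeneous vector bundles as pushforwards of line bundles along isogenies and identifies exactly when the Ext groups can be nonzero).

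With (i)--(iv) in hand, \Cref{A t: criterion} directly produces the desired twisted derived equivalence $\Phi_\EE:\Db(M, \mu^{-1}) \to \Db(X)$.
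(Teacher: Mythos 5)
Your outline deviates from the paper's proof in two places, one of which is a genuine gap.

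First, the conditions you list as the criterion are not quite what \Cref{A t: criterion} actually requires, and the discrepancy matters. You record (i) simplicity, (ii) $\omega_X$-invariance, (iii) $\dim M = \dim X$, and (iv) orthogonality, and you declare (i)--(iii) immediate. But condition (1) of \Cref{A t: criterion} asks that $\Phi_\EE$ induce an \emph{isomorphism} on $\Ext^i(k(y),k(y))$ for $i=0,1$; at the level of $\Ext^1$ this is the statement that the map of Zariski tangent spaces
$$T_y(\sshx) \longrightarrow T_{\EE_y}(\spl)$$
is an isomorphism, not merely that its source and target have the same dimension. Knowing $\dim M = \dim X$ does not by itself make this map an isomorphism, and this is precisely where the paper expends the most effort: it invokes \cite[Propositions 6.3 and 6.4]{HP24}, uses the universality of $\EE$ to see the identity on $T_y\sshx$ factors through $\text{Lift}[k(y),t_X] \to \text{Lift}[\Phi(k(y)),t_Y]$, and then propagates surjectivity through the diagram in \cite[Proposition 6.4]{HP24} to conclude all its maps are isomorphisms. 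Your proposal skips this entirely. (If you instead had in mind the Bridgeland-style criterion over $\mathbb{C}$, note that its proof also quietly relies on the Kodaira--Spencer map being an isomorphism for a universal family; the \cite{HP24} version the paper uses makes that requirement explicit because the argument must work over an arbitrary field.)

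Second, a more minor point: for the orthogonality (iv) you correctly identify the reduction to the locally free case via tensoring with a sufficiently ample $L$ and applying $\Phi_{\mathscr{P}}$, but you then appeal loosely to ``Mukai's structure theory.'' The paper's \Cref{2 l: Orlov's lemma} at this step cites \cite[Lemma 4.8]{Or02} (after observing that equal Chern character forces equal slope); that lemma does exactly the needed vanishing, and its proof is characteristic-independent. Your route via pushforwards of line bundles along isogenies would need to be carried out in detail, so this is imprecise but plausibly salvageable. Finally, the passage from full faithfulness to equivalence is handled in the paper by a Serre-functor argument using Grothendieck duality (\Cref{A c: coherent twisted adjoints}); your condition (ii) gestures in this direction but you should note that \Cref{A t: criterion} only yields full faithfulness, so this extra step is not optional.
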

\begin{proof}
    By \Cref{A l: base change for fields}, we may suppose that $k$ is algebraically closed.  We first verify that conditions (1) and (2) of \Cref{A t: criterion} are satisfied.  We begin by noting that for any closed point $y \in \text{SSH}^\xi_{X/k}$, $\Phi_{\EE}(k(y))=\EE_y$ (see \Cref{A d: twisted skyscraper sheaves} for the definition of $k(x)$).  The first part of (1) is trivially satisfied.  The second part of (1) follows from \cite[Proposition 6.3]{HP24} applied with:
    \begin{enumerate}[(a)]
    \item $S=\Spec(D)$, where  $D=k[\epsilon]/(\epsilon^2)$, $T=\Spec(k)$, and $t$ the usual thickening,
    \item Their $X$ equal to $\mathcal{SSH}^{\xi}_{X/k,D}$ and their $Y$ equal to our $X_{D}$
    \item $F=k(y)$ for a closed point $y\in \text{SSH}^\xi_{X/k}$, and $K=\EE_D$.
    \end{enumerate}
    Since $\EE$ is the universal sheaf, the identity map $T_y\mathcal{SSH}^\xi_{X/k} \to T_y\mathcal{SSH}^\xi_{X/k}$ factors through the map $\text{Lift}[F,t_X] \to \text{Lift}[\Phi_{K_T}(F),t_Y]=T_y\mathcal{SSH}^\xi_{X/k}$.  By a remark in Section $6$ of \cite{HP24}, all vertical maps in the diagram in \cite[Proposition 6.4]{HP24} are isomorphisms, and so is the rightmost diagonal map.  It follows that all maps in this diagram are isomorphisms.  In particular, the leftmost diagonal map, which is the morphism $\Phi_\EE:\Ext^1(k(y),k(y)) \to \Ext^1(\EE_y,\EE_y)$, is an isomorphsim.  Thus condition (1) of \Cref{A t: criterion} is satisfied.  Condition (2) is satisfied by \Cref{2 l: Orlov's lemma}.  This shows that $\Phi_\EE$ is fully faithful.  Using Grothendieck duality (\Cref{A c: coherent twisted adjoints}), we can argue exactly as in \cite[Remarks 3.37 ii)]{Huy06} to show that tensoring with the dualizing complex is a Serre functor.  In this case, the dualizing complexes are trivial line bundles concentrated in degree $-\text{dim}(X)$, so that $\Phi_\EE$ preserves Serre functors.  By \cite[Corollary 1.56]{Huy06}, we obtain that $\Phi_\EE$ is an equivalence.
\end{proof}

\begin{lem}\label{2 l: Orlov's lemma}
    Let $A$ be an abelian variety over an algebraically closed field $k$, let $E$ and $F$ be simple semi-homogeneous sheaves on $A$ such that $\text{ch}(E)=\text{ch}(F)\in \text{CH}_{\text{num}}(A)_{\mathbb{Q}}$.  Then either $E\simeq F$ or $\Ext^i(E,F)=0$ for every $i \in \mathbb{Z}$.
\end{lem}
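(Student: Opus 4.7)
The plan is to reduce the statement to the case of simple semi-homogeneous vector bundles via a Fourier-Mukai transform, and then to invoke Mukai's structure theory. By \Cref{1 l: action and geometric points}, the equality $\text{ch}(E) = \text{ch}(F)$ in $\text{CH}_{\text{num}}(A)_{\mathbb{Q}}$ yields a pair $(a,\alpha)\in A(k) \times \hat{A}(k)$ with
\[
F \simeq t_a^*E \otimes \mathscr{P}_\alpha^{-1}.
\]

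Following the reduction device in the proof of \Cref{1 p:ext}, I would pick an ample line bundle $L$ on $A$ such that both $\widehat{E\otimes L}:=\Phi_{\mathscr{P}}(E\otimes L)$ and $\widehat{F\otimes L}:=\Phi_{\mathscr{P}}(F\otimes L)$ are locally free; this is possible by the uniform Castelnuovo-Mumford bounds on the family $\mathscr{P}\otimes p_1^*(E\otimes L)$ together with cohomology and base change. By \Cref{1 l: tensor S(F)} and \Cref{1 l: Poincare S(F)} these are simple semi-homogeneous vector bundles on $\hat{A}$, and because tensoring with $L$ and the Fourier-Mukai transform are both equivalences of bounded derived categories, they preserve numerical Chern characters (up to a fixed isomorphism of $\text{CH}_{\text{num}}(-)_{\mathbb{Q}}$), isomorphism classes, and $\Ext$-groups. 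Hence
\[
E \simeq F \iff \widehat{E\otimes L} \simeq \widehat{F\otimes L}, \qquad \Ext^i(E,F) \simeq \Ext^i(\widehat{E\otimes L},\widehat{F\otimes L}) \text{ for all } i,
\]
and the problem reduces to the case of two simple semi-homogeneous vector bundles on $\hat{A}$ with the same numerical Chern character.

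In the locally free case, $\HHom(\widehat{E\otimes L},\widehat{F\otimes L})$ is itself a semi-homogeneous vector bundle, and the dichotomy follows from Mukai's cohomological analysis in \cite{Mu78}: each simple summand is either isomorphic to $\mathcal{O}_{\hat{A}}$, in which case $\widehat{E\otimes L} \simeq \widehat{F\otimes L}$ (and hence $E \simeq F$), or has vanishing cohomology on $\hat{A}$, in which case all $\Ext^i$-groups vanish. The main obstacle is isolating the precise statement in \cite{Mu78} to cite; an alternative is to give a short direct argument in the vector bundle case by analyzing $\HHom(\widehat{E\otimes L},\widehat{F\otimes L})$ using the explicit relation $\widehat{F\otimes L} \simeq t_{-\alpha}^*\widehat{E\otimes L} \otimes \mathscr{P}_{\hat{a}}^{-1}$ provided by the argument of \Cref{1 l: Poincare S(F)}, reducing the cohomology computation to the well-known vanishing for nontrivial elements of $\text{Pic}^0$ on an abelian variety.
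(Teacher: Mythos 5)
Your reduction to the locally free case is exactly the paper's: tensor with a sufficiently ample $L$, apply $\Phi_{\mathscr P}$, and use that both operations are derived equivalences to transport $\Ext$-groups and isomorphism classes. Where you diverge is the endgame. The paper simply observes that $\widehat{E\otimes L}$ and $\widehat{F\otimes L}$ have the same slope and cites \cite[Lemma 4.8]{Or02}, noting that Orlov's proof of that lemma is characteristic-free; your sketch instead tries to re-derive the vector-bundle dichotomy from Mukai's decomposition of the homogeneous bundle $\HHom(\widehat{E\otimes L},\widehat{F\otimes L})$ into pieces $\mathscr P_i\otimes U_i$ and the vanishing of cohomology for nontrivial $\mathscr P_i\in\Pic^0$.

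That alternative route is workable but as written has a gap at the crucial step. Showing that some summand of $\HHom(\widehat{E\otimes L},\widehat{F\otimes L})$ has trivial $\Pic^0$-part gives you a nonzero map $\widehat{E\otimes L}\to\widehat{F\otimes L}$; it does not, by itself, give an isomorphism. To close this you need that simple semi-homogeneous bundles of the same slope and Chern character admitting a nonzero homomorphism are isomorphic, which in turn rests on the (Gieseker-)stability of simple semi-homogeneous bundles (\cite{Mu78}) so that the usual ``image has the same reduced Hilbert polynomial, hence the map is injective and surjective'' argument applies. Your sketch gestures at this with ``in which case $\widehat{E\otimes L}\simeq\widehat{F\otimes L}$'' but never supplies the stability input. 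If you supply it, you have essentially re-proved \cite[Lemma 4.8]{Or02}; citing it directly (as the paper does, with the observation that its proof makes no characteristic assumptions) is cleaner. The introductory appeal to \Cref{1 l: action and geometric points} is harmless but not used in your primary argument, only in the alternative you mention at the end.
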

\begin{proof}
    After tensoring with a sufficiently ample line bundle and applying $\Phi_{\mathscr{P}}$, we may assume that $E$ and $F$ are locally free.  Since $E$ and $F$ have the same Chern character, they have the same slope.  We may apply \cite[Lemma 4.8]{Or02} because its proof does not use the assumptions on the characteristic.
\end{proof}
Using this, we can recover some known examples:

\begin{ex}\label{2 e: Pic equivalences}
     Let $A$ be an abelian variety over a field $k$, and let $X$ be an $A$-torsor.  Let $\lambda \in \text{NS}_{X/k}(k)$, let $\mu=[\mathscr{P}ic^\lambda_{X/k}]\in H^2(\text{Pic}_{X/k}^\lambda,\Gm)$ denote the universal obstruction, and let $\mathscr{P}_{X}$ denote the twisted Poincaré bundle on $\text{Pic}^{\lambda}_{X/k}\times X$.  Then $$\Phi_{\mathscr{P}_{X}}:\Db(\text{Pic}^\lambda_{X/k},\mu^{-1}) \to \Db(X)$$
     is an equivalence.
     \begin{enumerate}
         \item When $X=A$, we obtain \cite[Theorem 4]{AAFH21}
         \item  When $\lambda=0$, we obtain the equivaelence in \cite[Theorem 5]{RR23}.
     \end{enumerate}
\end{ex}

Using \cite{dJO22}, one can obtain the following partial converse to \Cref{2 t : equivalent to ssh}.

\begin{thm}\label{2 t: converse}
    Let $A$ be an abelian variety over a field $k$ and let $X$ be an $A$-torsor.  Let $Y$ be a smooth projective variety, and let $\Phi_{\EE}:D^b(Y,\mu^{-1}) \to \Db(X)$ be an equivalence. If either
\begin{enumerate}
    \item $k$ has characteristic $0$,
    \item $\mu \in Br_1(Y):=\text{Ker}(\text{Br}(Y) \to \text{Br}(Y_{\overline{k}}))$,
    \item $Y$ is a torsor under an abelian variety,
\end{enumerate}
then there exists some $\xi \in \text{CH}_{\text{num}}(X_{\overline{k}})$ such that $Y \simeq \text{SSH}^{\xi}_{X/k}$, $\mu$ is the universal obstruction, and $\EE$ is a shift of the universal sheaf.  In particular, $Y$ is a torsor under an abelian variety.
\end{thm}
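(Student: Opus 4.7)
The strategy is to fix a closed point $y \in Y$ and analyze $\EE_y := \Phi_\EE(k(y)) \in \Db(X)$. After reducing to $k=\overline{k}$ via \Cref{A l: base change for fields}, the fact that $\Phi_\EE$ is an equivalence gives that $\EE_y$ is simple and satisfies $\dim\Ext^i(\EE_y,\EE_y) = \dim\Ext^i(k(y),k(y)) = \binom{\dim Y}{i}$ for every $i$. Comparing the top nonvanishing Ext group with the Serre functor on $\Db(X)$, which is shift by $\dim X$, forces $\dim Y = \dim X = g$. Consequently $\EE_y$ realizes condition (1) of the introduction at every closed point.

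The crux is to show that each $\EE_y$ is a simple semi-homogeneous sheaf up to a cohomological shift. In case (1) this is immediate from \cite[Theorem 1.1]{dJO22} combined with \Cref{1 p:ext}, which identifies simple sheaves $F$ satisfying $\dim\Ext^1(F,F)=g$ with the semi-homogeneous ones. For cases (2) and (3) I plan to work around the characteristic hypothesis of \cite{dJO22}. In case (3), the transitive $B$-action on $Y$ by translation intertwines via $\Phi_\EE$ with an action on $\Db(X)$ by automorphisms of the relevant $A\times\hat{A}$, so it suffices to establish cohomological concentration of one $\EE_{y_0}$; the existence of a $g$-dimensional connected subgroup acting on $\EE_{y_0}$ through this intertwining, together with the Ext invariants and the fact that $X$ has trivial canonical bundle, forces $\EE_{y_0}$ to be concentrated in a single degree. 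In case (2), $\mu$ becomes trivial after base change to $\overline{k}$, turning $\Phi_\EE$ into a genuine derived equivalence $\Db(Y_{\overline{k}})\simeq\Db(X_{\overline{k}})$; I can then apply the Serre-functor and purity arguments of \cite[\S 4]{Or02}, which do not require characteristic zero, to obtain cohomological concentration.

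Once every $\EE_y$ is a simple semi-homogeneous sheaf up to shift, I globalize. The locus of $Y$ on which $\EE$ has a prescribed cohomological concentration is open, and since $Y$ is connected a single global shift of $\EE$ produces a flat family of simple semi-homogeneous sheaves on $X$. Its members share a common numerical Chern character $\xi \in \text{CH}_{\text{num}}(X_{\overline{k}})_{\mathbb{Q}}$ by connectedness of $Y$. The universal property of $\mathcal{SSH}^\xi_{X/k}$ yields a classifying morphism $f \colon Y \to \text{SSH}^\xi_{X/k}$ under which $\EE$ becomes the pullback of the universal twisted sheaf and $\mu$ becomes the pullback of the universal obstruction.

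The final step is to verify that $f$ is an isomorphism. By \cite[Lemma 5.2]{LO15}, the morphism $Y \to \text{sD}_{X/k}$ associated to $\EE$ is an open immersion, and $f$ factors this through the open substack $\text{SSH}^\xi_{X/k}\subseteq\text{sD}_{X/k}$, so $f$ itself is an open immersion. By \Cref{1 p:torsor}, $\text{SSH}^\xi_{X/k}$ is a torsor under an abelian variety of dimension $g=\dim Y$, and $Y$ is projective of the same dimension, so $f$ is necessarily surjective and hence an isomorphism. I expect the hardest step to be the characteristic-free argument in cases (2) and (3): controlling the cohomological concentration of $\EE_y$ without invoking the deformation-theoretic input from \cite{dJO22} is the most delicate point, and success hinges on exploiting either the torsor structure on $Y$ or the vanishing of $\mu$ on $Y_{\overline{k}}$ to substitute for that input.
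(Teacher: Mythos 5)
Your overall skeleton---reduce to $\overline{k}$, show $\dim Y = \dim X$, prove each $\EE_y$ is a shifted simple semi-homogeneous sheaf, globalize the shift, and compare dimensions against the open immersion into $\Sshx$---matches the paper's proof. The globalization and final comparison steps are essentially identical (though you should cite the paper's twisted version of the open-immersion result, \Cref{2 p: open immersion}, rather than \cite[Lemma 5.2]{LO15}, which is stated for untwisted kernels).

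The genuine gap is in cases (2) and (3), exactly where you admit difficulty. For case (2), your plan to invoke the purity arguments of \cite[\S 4]{Or02} after base change fails: those arguments take as \emph{input} that both source and target are abelian varieties and exploit Poincar\'e bundles and semi-homogeneity on both sides, whereas here $Y$ is an arbitrary smooth projective variety. For case (3), you gesture at a $g$-dimensional connected subgroup acting on $\EE_{y_0}$ and claim this together with the Ext data and trivial $\omega_X$ ``forces'' cohomological concentration, but you give no argument, and that implication is nontrivial---it is precisely the hard content of \cite[Theorem 1.1]{dJO22}. The paper does not try to avoid \cite{dJO22}; rather, it verifies the additional hypothesis $\dim \mathbf{S}(\EE_y) \geq g$ that \cite[Remark 1.3]{dJO22} requires to lift the characteristic-zero restriction. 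The mechanism is: uniqueness of dg-enhancements (\cite{CNS22}) upgrades $\Phi_\EE$ to a dg quasi-equivalence, so $\text{Autoeq}^0(\Db_{\text{dg}}(Y,\mu^{-1})) \simeq \text{Autoeq}^0(\Db_{\text{dg}}(X)) \simeq A \times \hat A$; $\text{Pic}^0_{Y/k}$ sits inside the left side and acts trivially on $k(y)$, so its image lands in $\mathbf{S}(\EE_y)$, giving $\dim \mathbf{S}(\EE_y) \geq \dim \text{Pic}^0_{Y/k}$. In case (3), $\dim \text{Pic}^0_{Y/k} = \dim Y = g$ because $Y$ is a torsor; in case (2), one gets $\dim \text{Pic}^0_{Y/k} = \dim \text{Pic}^0_{X/k} = g$ from \cite[Theorem A.1]{Hon18}, which applies because the base-changed equivalence is untwisted. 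Without this dg-enhancement/autoequivalence-group argument (or some substitute establishing the stabilizer bound), your proposal does not close the gap.
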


First we need two preliminary results.

\begin{lem}\label{2 l: dimension}
    Let $X$ and $Y$ be two smooth, projective varieties over a field $k$.  Let $\mu \in \text{Br}(Y)$, and let $$\Phi_\EE:\Db(Y,\mu^{-1}) \to \Db(X)$$
    be an equivalence.  Then $\text{dim}(X)=\text{dim}(Y)$.
\end{lem}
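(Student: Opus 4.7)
The plan is to exploit the fact that both categories admit Serre functors given by tensoring with the dualizing sheaf and shifting by the dimension, and that any equivalence of triangulated categories intertwines Serre functors. First I would reduce to the case that $k$ is algebraically closed by applying \Cref{A l: base change for fields} and observing that the dimensions of $X$ and $Y$ are preserved under base change to a field. In this setting, Grothendieck duality for coherent twisted derived categories (\Cref{A c: coherent twisted adjoints}), applied exactly as in the proof of \Cref{2 t : equivalent to ssh}, shows that $\Db(Y,\mu^{-1})$ has a Serre functor given by $-\otimes\omega_Y[\dim Y]$, while $\Db(X)$ has Serre functor $-\otimes\omega_X[\dim X]$.

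Next I would apply the intertwining property to a twisted skyscraper sheaf. Pick a closed point $y\in Y$, let $k(y) \in \Db(Y,\mu^{-1})$ denote the associated twisted skyscraper (as in the paper's conventions over algebraically closed fields), and set $E := \Phi_\EE(k(y))\in\Db(X)$. Because $\omega_Y$ is a line bundle, the Serre functor applied to $k(y)$ yields $k(y)[\dim Y]$, and commuting with $\Phi_\EE$ gives
\[E[\dim Y] \;\simeq\; \Phi_\EE\bigl(k(y)\otimes\omega_Y[\dim Y]\bigr) \;\simeq\; E\otimes \omega_X[\dim X].\]
Since $\Phi_\EE$ is an equivalence, $E \neq 0$, so the set of cohomological degrees $\{i : \mathcal{H}^i(E)\neq 0\}$ is a nonempty bounded subset of $\mathbb{Z}$; tensoring with the line bundle $\omega_X$ preserves this set, while shifting by an integer $n$ translates it by $-n$. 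Comparing the two sides of the displayed isomorphism forces $\dim Y = \dim X$.

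There is essentially no real obstacle here: once the Serre functor formalism is set up, the argument is identical to the untwisted case. The only point requiring care is the existence and description of the Serre functor on $\Db(Y,\mu^{-1})$, which follows from the twisted Grothendieck duality that has already been invoked in the proof of \Cref{2 t : equivalent to ssh}.
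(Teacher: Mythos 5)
Your proof is correct and is essentially the same argument as the paper's, just phrased in the abstract language of Serre functors rather than in the concrete language of the left and right adjoint Fourier--Mukai kernels. The paper observes that the equivalence forces the left and right adjoints of $\Phi_\EE$ to coincide, writes these as Fourier--Mukai transforms with kernels $\EE^\vee$ twisted by $\omega_X[\dim X]$ and $\omega_Y[\dim Y]$ respectively, applies the resulting isomorphism to a skyscraper $k(x)$, and compares cohomological degrees; saying ``left adjoint equals right adjoint'' is precisely the Fourier--Mukai incarnation of ``$\Phi_\EE$ intertwines Serre functors,'' so your Serre-functor argument and your application to $k(y)$ are the dual mirror image of the paper's computation.
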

\begin{proof}
Since $\Phi_{\EE}$ is an equivalence, the right and left adjoints of $\EE$ coincide.  It follows that for any $F \in \Db(X)$,
$$Rp_{1,*}((\EE^\vee \otimes p_1^*\omega_X[\text{dim}(X)]) \otimes^L p_2^*F) \simeq Rp_{1,*}((\EE^\vee \otimes p_2^*\omega_Y[\text{dim}(Y)]) \otimes^L p_2^*F).$$
There exists some $x \in X(k)$ such that the above quanitites are nonzero for $F=k(x)$.  For such $x$, the above isomorphism becomes
$$\Phi_{\EE^\vee}(k(x))\otimes \omega_X[\text{dim}(X)] \simeq \Phi_{\EE^\vee}(k(x))[\text{dim}(Y)].$$
This implies that $\text{dim}(X)=\text{dim}(Y)$.
\end{proof}

\begin{prop}\label{2 p: open immersion}
    Let $X$ and $Y$ be smooth, projective varieties over a field $k$.  Let $\mu \in \text{Br}(Y)$, let $\YY$ be the $\Gm$-gerbe with cohomology class $\mu$, and let $\EE$ be a $p_1^*\mu$-twisted $\YY$-flat family of simple sheaves on $X$.  Suppose that the functor
    $$\Phi_\EE:\Db(Y,\mu^{-1}) \to \Db(X)$$
    is an equivalence.  Then $\EE$ induces an open immersion $i_\EE:\YY \hookrightarrow \mathcal{S}pl_{X/k}$.
\end{prop}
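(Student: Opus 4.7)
The plan is to adapt the strategy of \cite[Lemma 5.2]{LO15} to the twisted setting. First, I would construct $i_\EE$ as the classifying morphism for $\EE$: viewing $\EE$ as an honest $\YY$-flat family of simple sheaves on $X$ via the $\Gm$-gerbe formalism of \Cref{A SECTION}, the universal property of $\mathcal{S}pl_{X/k}$ produces a morphism $i_\EE:\YY \to \mathcal{S}pl_{X/k}$ whose pullback of the universal sheaf recovers $\EE$. To conclude $i_\EE$ is an open immersion, I would show that it is étale and a monomorphism, and then invoke the standard fact that an étale monomorphism of algebraic stacks is an open immersion.

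For étaleness, I would identify the differential and obstruction maps at a geometric point $y \in \YY$. The relevant identifications are $T_y\YY = \Ext^1_{\Db(Y,\mu^{-1})}(k(y),k(y))$ and $T_{\EE_y}\mathcal{S}pl_{X/k} = \Ext^1_X(\EE_y,\EE_y)$, and analogously for obstructions in $\Ext^2$. As in the proof of \Cref{2 t : equivalent to ssh}, an application of \cite[Propositions 6.3 and 6.4]{HP24} with $F=k(y)$ and $K=\EE$ identifies $di_\EE$ and the obstruction map with the maps induced by $\Phi_\EE$. Since $\Phi_\EE$ is an equivalence, these maps are isomorphisms, so $i_\EE$ is formally étale, and combined with local finite presentation of $\mathcal{S}pl_{X/k}$ this gives étaleness.

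For the monomorphism property, I would check that for every scheme $T$ and every pair $y_1,y_2 \in \YY(T)$, the natural morphism $\text{Isom}_\YY(y_1,y_2) \to \text{Isom}(\EE_{y_1},\EE_{y_2})$ is an isomorphism of sheaves on $T$. The key input is that $\Phi_\EE$ extends to an equivalence between the relative twisted derived categories over any base $T$ (by base change for twisted integral transforms, as summarized in \Cref{A SECTION}). Applying full faithfulness of this relative equivalence to the twisted skyscrapers $k(y_i)$ supplies an inverse to the map above, establishing the required bijection on isomorphism sheaves; the $\Gm$-automorphisms of the gerbe structures match by construction of $i_\EE$.

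The main technical obstacle is step 3: one must carefully formulate the relative version of $\Phi_\EE$ in the twisted setting and verify that it sends the relative twisted skyscrapers over $\YY_T$ to the appropriate restrictions of $\EE$, with automorphism compatibility encoded by the $\Gm$-gerbe structure. This is where all the formalism developed in \Cref{A SECTION} enters. Once this relative machinery is in place, étaleness and the monomorphism property both follow cleanly from the fact that $\Phi_\EE$ is an equivalence, and the open immersion conclusion is then automatic.
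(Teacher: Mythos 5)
Your overall strategy is sound and shares its key inputs with the paper's argument: the tangent-space computation via \cite[Propositions 6.3, 6.4]{HP24}, full faithfulness of $\Phi_\EE$ to separate points, and the $p_1^*\mu$-twisting of $\EE$ to control the $\Gm$-automorphisms. (On that last point: the inertia map $\Gm \to \Gm$ is the identity because $\EE$ is $p_1^*\mu$-twisted, so the band acts by scalar multiplication with weight one --- this is a property of $\EE$, not something that holds ``by construction of $i_\EE$.'') The real divergence, and the gap, is in your monomorphism step. You propose to verify that $\mathrm{Isom}_\YY(y_1,y_2) \to \mathrm{Isom}(\EE_{y_1},\EE_{y_2})$ is an isomorphism of sheaves for arbitrary $T$-points, and you rest this on the assertion that ``$\Phi_\EE$ extends to an equivalence between the relative twisted derived categories over any base $T$.'' That statement is not established anywhere in \Cref{A SECTION}: the appendix only provides base change along field extensions (\Cref{A l: base change for fields}) and a fiberwise criterion over a base (\Cref{A l: base change}). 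Making your step rigorous would require showing that the kernel of $H \circ \Phi_\EE$ is the twisted structure sheaf of the diagonal and that this persists after arbitrary base change, together with a sheafified (local-on-$T$) form of full faithfulness to compare $\mathscr{H}om$-sheaves rather than global Homs. You correctly flag this as ``the main technical obstacle,'' but as written it is deferred rather than resolved, so the proof is incomplete at exactly the point where it departs from the paper.

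The paper avoids this machinery entirely. After reducing to $k = \overline{k}$ via \Cref{A l: base change for fields}, it factors the problem through the rigidification: one checks that $i_\EE$ is an isomorphism on inertia (so the square over $j_\EE: Y \to \text{Spl}_{X/k}$ is cartesian), that $j_\EE$ is injective on closed points (immediate from full faithfulness: $\EE_{y_1} = \Phi_\EE(k(y_1)) \not\simeq \Phi_\EE(k(y_2)) = \EE_{y_2}$), and that $j_\EE$ is an isomorphism on Zariski tangent spaces at closed points (the \cite{HP24} argument you already have). For locally of finite type morphisms over an algebraically closed field this pointwise data already yields an open immersion, so no relative or functorial version of the equivalence is ever needed. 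If you want to salvage your étale-monomorphism framing, the fix is to replace your step 3 by this closed-point check over $\overline{k}$; otherwise you must actually supply the base-change statement for twisted Fourier--Mukai equivalences that your argument invokes.
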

\begin{proof}
We may reduce to the case where $k$ is algebraically closed by \Cref{A l: base change for fields}.  $i_\EE$ induces a map $j_\EE:Y \to \text{Spl}_{X/k}$.  It suffices to show that the following square is cartesian
$$\begin{tikzcd}
{\YY} \arrow[r, "i_\EE"] \arrow[d] & {\mathcal{S}pl_{X/k}} \arrow[d] \\
{Y} \arrow[r, swap, "j_\EE"]           & {\text{Spl}_{X/k}}          
\end{tikzcd}$$
and that $j_\EE$ is an open immersion.  To do this, we may show that $i_\EE$ induces an isomorphism on inertia and that $j_\EE$ is injective on closed points and an isomorphism on the Zariski tangent spaces of closed points.  The map on inertia is the identity map $\Gm \to \Gm$ because it comes from the action of (the pullback of) $I_{\YY}$ on $\EE$ which is given by scalar multiplication since $\EE$ is $p_1^*\mu$-twisted.  Hence, the induced map $\YY \to j_\EE^*\mathcal{S}pl_{X/k}$ is an isomorphism since it is an isomorphism on inertia.  For any pair of distinct closed points $y_1,y_2 \in Y(k)$, $\EE_{y_1}=\Phi_\EE(k(y_1)) \neq \Phi_{\EE}(k(y_2))=\EE_{y_2}$ since $\Phi_\EE$ is fully faithful.  This means that $j_\EE$ is injective on closed points.  To finish, it suffices to show that for any $k$-point $y \in \YY$, $i_\EE$ induces a surjective map from the Zariski tangent space at $y$ to that at $\EE_y$.  This morphism factors through a surjective map by \cite[Proposition 6.3]{HP24}, namely the map $\text{Lift}[\mathcal{O}_y,\YY \to \YY_D] \to \text{Lift}[\EE_y,\mathcal{S}pl_{X/k} \to \mathcal{S}pl_{X/k,D}]=T_{\EE_y}\mathcal{S}pl_{X/k}$.  Note that the vertical maps in \cite[Proposition 6.3]{HP24} are actually surjective by a remark made in Section $6$ of \cite{HP24}.
\end{proof}

\begin{proof}[Proof of \Cref{2 t: converse}]
    We first to show that, up to shift, $\EE$ is a $\YY$-flat family of simple semi-homogeneous sheaves on $X$, where $\YY \to Y$ is the $\Gm$-gerbe with cohomology class $\mu$.  It suffices to do so after base change, so we may assume that $k=\overline{k}$.  Note that $\Phi_\EE$ is still an equivalence by \Cref{A l: base change for fields}.  In this case, $\EE$ induces a morphism $i_{\EE}:\YY \to s\mathscr{D}_{X/k}$ where $\text{s}\mathscr{D}_{X/k}$ is the stack of simple complexes on $X$.  By \cite[Remark 0.3]{Ina02}, $\text{s}\mathscr{D}_{X/k}$ contains $\mathcal{S}pl_{X/k}$ as an open substack, so to show that, up to shift, $\EE$ is a $\YY$-flat family of simple sheaves on $X$, we show that $\EE_{y}$ is a simple sheaf (concentrated in some fixed degree $n$) for each geometric point $y\in Y$.  Let $y \in Y$ be a geometric point.  After base change, we can assume that $y\in Y(k)$.  Then $\EE_y$ is simple since $\Phi_\EE$ is an equivalence, so we can view $\EE_y$ as a $k$-point of $sD_X$. We also have:
    \begin{enumerate}
         \item $\text{Ext}^i(\EE_{y},\EE_{y})<0$ for $i < 0$,
        \item $\text{Ext}^1(\EE_{y},\EE_{y})=\text{dim}(Y)=\text{dim}(X)$.
    \end{enumerate}
    If $k$ is of characteristic zero, then $\EE_y$ automatically a simple semi-homogeneous sheaf up to shift by \cite[Theorem 1.1]{dJO22}.  In the general case, we first need to show that $\text{dim}(\mathbf{S}(\EE_y))\geq \text{dim}(X)$.  Then we can apply \cite[Theorem 1.1, Remark 1.3, (6.1)]{dJO22} to obtain that $\EE_y$ is a simple semi-homogeneous sheaf up to shift.  Recall that by \cite[Theorem A]{CNS22} the bounded derived category of any abelian category has a unique (up to quasi-equivalence) dg-enhancement.  It follows that $\Phi_\EE$ extends to a quasi-equivalence $\Db_{\text{dg}}(Y,\mu^{-1}) \xrightarrow{\sim}\Db_{\text{dg}}(X)$.  In particular, we obtain an isomorphism of autoequivalence group schemes $\text{Autoeq}(\Db_{\text{dg}}(Y,\mu^{-1})) \simeq \text{Autoeq}(\Db_{\text{dg}}(X))$ (see \cite[Corollary 3.24]{TV07}).  Restricting to connected components of the identity and using \cite[Theorem 2.12]{Ros09}, we get an isomorphism $\text{Autoeq}^0(\Db_{\text{dg}}(Y,\mu^{-1})) \simeq A \times \hat{A}$. $\text{Autoeq}^0(\Db_{\text{dg}}(Y,\mu^{-1}))$ contains $\text{Pic}^0_{Y/k}$ as an an abelian subvariety.  Since $\text{Pic}_{Y/k}^0$ acts trivially on $k(y)$, its image under this isomorphism acts trivially on $\EE_y$ so that $\text{dim}(\text{Pic}_{Y/k}^0) \leq \text{dim}(\mathbf{S}(\EE_y))$.  So if $\text{dim}(\text{Pic}_{Y/k}^0) \geq \text{dim}(X)$, then $\EE_y$ is a simple semi-homogeneous sheaf up to shift.  If $Y$ is a torsor under an abelian variety, then $\text{dim}(\mathbf{S}(\EE_y))\geq \text{dim}(Y)=\text{dim}(X)$ as desired.  If $Y$ is arbitrary and $\mu \in \text{Br}_1(Y)$, then $\text{dim}\text({\Pic}_{Y/k}^0)=\text{dim}(\text{Pic}^0_{X/k})$ by \cite[Theorem A.1]{Hon18}, so the result holds in this case as well.

    Thus far, we have shown that for each geometric point $y \in Y$, there exists an integer $n_y$ such that $\EE_y$ is a simple semi-homogeneous sheaf up to shift.  For each $n\in \mathbb{Z}$, we denote the open substack of $s\mathscr{D}_{X/k}$ parameterizing simple sheaves concentrated in degree $n$ by $\mathcal{S}pl_{X/k}[n]$.  Taking the inverse image under $i_{\EE}$, we obtain an open cover of $\YY$.  If $n_{y_1} \neq n_{y_2}$ for some pair of points $y_1,y_2 \in Y$, then we obtain a pair of disjoint open substacks of $\YY$, which is impossible since $\YY$ is irreducible.  Thus we have shown that, up to shift, $\EE$ is a $\YY$-flat family of simple semi-homogeneous sheaves on $X$; note that we have now returned to working over an arbitrary field.  After shifting if necessary, we can apply \Cref{2 p: open immersion} to obtain an open immersion $\YY \hookrightarrow \mathcal{SSH}_{X/k}$.  Since $\YY$ is irreducible, this morphism factors through an open immersion $\YY  \hookrightarrow \mathcal{SSH}^{\xi}_{X/k}$ for some $\xi \in \text{CH}_{\text{num}}(X_{\overline{k}})_{\mathbb{Q}}$.  This map must be surjective, and therefore an isomorphism, since $Y$ and $\text{SSH}^{\xi}_{X/k}$ are proper, irreducible, and of the same dimension.
\end{proof}

\begin{cor} \label{2 c : FM partner of torsor}
Let $A$ be an abelian variety over a field $k$ and let $X$ be an $A$-torsor.  If $Y$ is a smooth projective variety, and if $\mu \in \text{Br}_1(Y)$ is such that $\text{D}^b(Y, \mu^{-1}) \simeq \text{D}^b(X)$, then $Y$ is a torsor under an abelian variety. 
\end{cor}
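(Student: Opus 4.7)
The plan is to apply \Cref{2 t: converse} directly, since the corollary is essentially just an extraction of one of its conclusions under hypothesis (2). Concretely, the hypotheses of the corollary supply us with precisely the data needed to invoke \Cref{2 t: converse}: a smooth projective variety $Y$, a Brauer class $\mu \in \text{Br}_1(Y)$, and a twisted derived equivalence $\Db(Y,\mu^{-1}) \simeq \Db(X)$, where $X$ is an $A$-torsor. Any equivalence between these twisted derived categories must arise from a kernel $\EE \in \Db(Y \times X, p_1^*\mu^{-1})$, so we have an equivalence of the form $\Phi_\EE$ required by \Cref{2 t: converse}.

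Since condition (2) of \Cref{2 t: converse} is satisfied, we conclude that there exists some $\xi \in \text{CH}_{\text{num}}(X_{\overline{k}})_{\mathbb{Q}}$ such that $Y \simeq \text{SSH}^{\xi}_{X/k}$. The final step is then to recall that by \Cref{1 p:torsor} (and the corollary immediately following it), the moduli space $\text{SSH}^{\xi}_{X/k}$ is a torsor under the abelian variety $A(\xi)$. Hence $Y$ inherits the structure of a torsor under an abelian variety.

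There is no real obstacle here: the entire content lies in \Cref{2 t: converse}, whose proof (in case (2)) required the nontrivial input that $\text{dim}(\text{Pic}^0_{Y/k}) = \text{dim}(\text{Pic}^0_{X/k})$ via \cite[Theorem A.1]{Hon18}, together with the deformation-theoretic machinery used to verify that $\EE_y$ is a simple semi-homogeneous sheaf at each geometric point. Once that theorem is in hand, the corollary is purely a matter of combining its conclusion with the structural result of \Cref{1 SECTION} that such moduli spaces are abelian-variety torsors.
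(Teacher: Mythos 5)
Your proof is correct and essentially matches what the paper intends: the corollary is read off directly from case (2) of \Cref{2 t: converse}, whose conclusion already asserts that $Y$ is a torsor under an abelian variety. Your remark that an abstract equivalence $\Db(Y,\mu^{-1})\simeq\Db(X)$ must be represented by a twisted kernel (so that \Cref{2 t: converse} is applicable) is a legitimate point that the paper glosses over; it rests on the twisted Fourier--Mukai representability theorem of Canonaco--Stellari, which would be worth citing explicitly.
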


\begin{cor} \label{2 c: theorem C}
    Let $A$ and $B$ be abelian varieties over field $k$.  Let $X$ be an $A$-torsor and let $Y$ be a $B$-torsor.  Then there is a twisted derived equivalence $\Db(Y, \mu^{-1}) \simeq \Db(X)$ if and only if there exists an isomorphism $Y \simeq \text{SSH}^{\xi}_{X/k}$ for some $\xi \in \text{CH}_{\text{num}}(X)_{\mathbb{Q}}$ and $\mu$ is the universal obstruction.
 \end{cor}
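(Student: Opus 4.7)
The plan is to prove the corollary as a direct consequence of the two main theorems of this section, \Cref{2 t : equivalent to ssh} and \Cref{2 t: converse}, with the key observation being that the hypothesis of the corollary places us squarely in case (3) of \Cref{2 t: converse}.

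For the ``if'' direction, I would simply invoke \Cref{2 t : equivalent to ssh}. Given an isomorphism $Y \simeq \text{SSH}^{\xi}_{X/k}$ and the identification of $\mu$ with the universal obstruction, the universal $p_1^*\mu$-twisted sheaf $\EE$ on $Y \times X$ is (up to this identification) the universal twisted sheaf on $\text{SSH}^{\xi}_{X/k}\times X$, and \Cref{2 t : equivalent to ssh} yields the desired twisted derived equivalence $\Phi_\EE: \Db(Y,\mu^{-1}) \xrightarrow{\sim} \Db(X)$. No further work is needed here; the nonemptiness hypothesis on $\mathcal{SSH}^\xi_{X/k}$ is automatic from the existence of the isomorphism $Y \simeq \text{SSH}^{\xi}_{X/k}$.

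For the ``only if'' direction, the hypothesis that $Y$ is a $B$-torsor under the abelian variety $B$ means that condition (3) of \Cref{2 t: converse} is satisfied. I would therefore apply that theorem directly to the equivalence $\Phi_\EE : \Db(Y,\mu^{-1}) \xrightarrow{\sim} \Db(X)$ to conclude that there exists $\xi \in \text{CH}_{\text{num}}(X_{\overline{k}})_{\mathbb{Q}}$ with $Y \simeq \text{SSH}^{\xi}_{X/k}$ and $\mu$ equal to the universal obstruction.

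The only minor subtlety to mention is the discrepancy between the corollary's statement (which writes $\xi \in \text{CH}_{\text{num}}(X)_{\mathbb{Q}}$) and the conclusion of \Cref{2 t: converse} (which produces $\xi \in \text{CH}_{\text{num}}(X_{\overline{k}})_{\mathbb{Q}}$); this is a harmless normalization, as the class $\xi$ only enters through the locus $\mathcal{SSH}^\xi_{X/k}$, which by definition is cut out by pulling back to the algebraic closure. I expect no real obstacle: the corollary is essentially a repackaging, and the entirety of the technical content has already been established in \Cref{2 t : equivalent to ssh} and \Cref{2 t: converse}.
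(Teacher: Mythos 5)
Your proof is correct and follows exactly the paper's (implicit) approach: the corollary is stated without proof as an immediate consequence of \Cref{2 t : equivalent to ssh} and \Cref{2 t: converse}, with the ``only if'' direction landing in case (3) of \Cref{2 t: converse} because $Y$ is a $B$-torsor. Your note on the $\text{CH}_{\text{num}}(X)$ versus $\text{CH}_{\text{num}}(X_{\overline{k}})$ discrepancy is a fair observation of a harmless inconsistency in the paper's notation.
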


\section{Isometric Isomorphisms and Lagrangians}\label{3 SECTION}
We begin by recalling the definition of an isometric isomorphism:
\begin{defn} Let 
$$f=\begin{pmatrix}
    x & y \\
    z & w
\end{pmatrix}: A \times \hat{A} \rightarrow B \times \hat{B}$$ 
be an isomorphism.  We say that $f$ is \textit{isometric} if
$$f^{-1}={\begin{pmatrix}
    \hat{w} & -\hat{y} \\
    -\hat{z} & \hat{x}
\end{pmatrix}}.$$
\end{defn}
We find it most convenient to deal with isometric isomorphisms using the language of symplectic biextensions (see \cite{Pol96}, \cite[Section 2]{Pol02}, \cite{Pol12}, \cite{MP17}), the essentials of which we recall here.

Following \cite{MP17}, we recall the following definition.  Let $A$ and $B$ be abelian varieties over a field $k$.  A \textit{biextension} of $A \times B$ (by $\Gm$) is a divisorial correspondence from $A$ to $B$, i.e., a line bundle $\LL$ on $A \times B$ together with rigidifications $$\epsilon_A:\LL\mid_{A \times 0} \xrightarrow{\sim} \mathcal{O}_A, \quad \epsilon_{B}:\LL\mid_{0 \times B} \to \mathcal{O}_B$$
whose restrictions to $0 \times 0$ coincide.  Note that any biextension $\LL$ determines a homomorphism $\psi_\LL:A \to \hat{B}$ and that $\LL$ is uniquely determined by $\psi_\LL$.

We consider the case where $A=B=C$.  We say a biextension $\LL$ of $C^2$ is a \textit{skew-symmetric} if $\widehat{\psi_{\LL}}=-\psi_\LL$, and we call a skew-symmetric biextension \textit{symplectic} if $\psi_\LL$ is an isomorphism.  We call an abelian subvariety $i:Z \hookrightarrow C$ \textit{isotropic} (with respect to $\LL$) if the $\hat{i}\psi_{\LL}i=0$.  An isotropic subvariety $Z \subseteq C$ is called \textit{Lagrangian} if $\psi_{\LL}$ restricts to an isomorphism $Z \rightarrow \text{ker}(\hat{i})$.  Note that for any Lagrangian subvariety $Z \subset C$, there is an isomorphism $C/Z \simeq \hat{Z}$.  We call a Lagrangian $Z \subseteq C$ \textit{split} if the projection map
$$p:C \rightarrow C/Z \simeq \hat{Z}$$
admits a section.

We will only be interested in the case where $C=A \times \hat{A}$ and $\LL=\LL_A:= p_{14}^*\PP\otimes p_{32}^*\PP^{-1}$.  In this case, we will write $\psi_{A}$ for $\psi_{\LL_A}$.  It will be useful to note that $A$ and $\hat{A}$ are split Lagrangian abelian subvarieties of $A \times \hat{A}$.  The aim of this section is to produce following description of the Lagrangian subvarieties of $A \times \hat{A}$:
\begin{prop}\label{3 p: lagrangian classification}
    Let $A$ be an abelian variety over a field $k$ and let $X$ be an $A$-torsor.  Then
    \begin{enumerate}
        \item If $\xi \in \text{CH}_{\text{num}}(X_{\overline{k}})_{\mathbb{Q}}$ is such that $\mathcal{SSH}^{\xi}_{X/k}$ is nonempty, then $\mathbf{S}(\xi)\subseteq A \times \hat{A}$ is a Lagrangian abelian subvariety.
        \item If $Z \subseteq A \times \hat{A}$ is a Lagrangian abelian subvariety, then there exists some $\xi \in  \text{CH}_{\text{num}}(X_{\overline{k}})_{\mathbb{Q}}$ such that $\mathcal{SSH}^{\xi}_{X/k}$ is nonempty and $Z=\mathbf{S}(\xi)$.
    \end{enumerate}
\end{prop}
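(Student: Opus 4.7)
The plan is to reduce both parts to the case $k=\overline{k}$ and $X=A$. This reduction is legitimate: \Cref{1 l: reduction} identifies $\mathcal{SSH}^\xi_{X/k}$ with $\mathcal{SSH}^\xi_{A/k}$ via an $A$-equivariant trivialization, compatibly with the $A \times \hat{A}$-action, and the formation of $\mathbf{S}(\xi)$ commutes with base change (being the kernel of an action); rationality of a Lagrangian then descends by Galois theory.

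For part (1), I would pick any $F \in \mathcal{SSH}^\xi_{A/k}(k)$, so that $\mathbf{S}(\xi) = \mathbf{S}(F)$ is a $g$-dimensional abelian subvariety of $A \times \hat{A}$. The first step is to show $\mathbf{S}(F)$ is isotropic with respect to $\LL_A$, via the standard commutator argument of \cite[(3.5.1)]{Mu78}: for $(a,\alpha),(b,\beta) \in \mathbf{S}(F)$, the two natural composites producing an isomorphism $t_{a+b}^*F\otimes \mathscr{P}^{-1}_{\alpha+\beta}\simeq F$ differ by a scalar equal to the value of $\psi_A$ on the pair, and simplicity of $F$ (so $\text{End}(F)=k$) forces this scalar to be trivial. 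To upgrade ``half-dimensional and isotropic'' to ``Lagrangian'', I would observe that $\psi_A$ restricts to a closed immersion $\mathbf{S}(F)\hookrightarrow\ker(\hat{i})$ (where $i: \mathbf{S}(F)\hookrightarrow A\times\hat{A}$); since $\ker(\hat{i})$ is canonically identified with $\widehat{(A\times\hat{A})/\mathbf{S}(F)}$, a smooth connected $g$-dimensional abelian variety, a closed immersion of $g$-dimensional abelian varieties is automatically an isomorphism.

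For part (2), given a Lagrangian $Z\subseteq A\times\hat{A}$, it suffices to produce a simple semi-homogeneous sheaf $F$ on $A$ with $\mathbf{S}(F)=Z$; the class $\xi:=\text{ch}(F)$ then satisfies $\mathcal{SSH}^{\xi}_{A/k}\neq\emptyset$ and $\mathbf{S}(\xi)=Z$ by the corollary to \Cref{1 p:torsor} that $\mathbf{S}(F)=\mathbf{S}(\text{ch}(F))$. I plan to use the isometric automorphisms $f_L$ (\Cref{1 l: tensor S(F)}) together with the Fourier--Mukai swap (\Cref{1 l: Poincare S(F)}) to transport realized Lagrangians to new ones: these conjugate $\mathbf{S}(F)$ to $\mathbf{S}(F\otimes L)$ and $\mathbf{S}(\Phi_{\mathscr{P}}(F))$ respectively, so the set of realized Lagrangians is closed under them. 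A starting stock is given by $A\times 0 = \mathbf{S}(\mathcal{O}_A)$, $0\times\hat{A} = \mathbf{S}(k(0))$, and the graphs $\Gamma_{\phi_L} = \mathbf{S}(L)$ (\Cref{1 e: basic examples}). Iterated alternation between $f_L$'s (on $A$ and on $\hat{A}$) and the Fourier--Mukai swap should bring any Lagrangian into the orbit of these standard ones.

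The main obstacle is the transitivity claim in part (2): showing that every Lagrangian can be reached from a standard one by a finite product of these elementary isometric automorphisms. A clean proof may require the factorization theorem on isometric isomorphisms established later in \Cref{3 SECTION}, which would introduce a logical ordering issue; an alternative is a direct Mukai-style construction of $F$ from $Z$ by descending a relative Poincar\'e bundle across the isogeny from $Z$ to $\hat{A}$ coming from the Lagrangian condition, thereby sidestepping any reduction to standard cases.
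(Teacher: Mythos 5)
Part (2) of your proposal contains a genuine gap, which you yourself flag: the ``transitivity'' claim that every Lagrangian can be brought into the orbit of the standard examples by finitely many elementary isometric automorphisms is precisely the hard point, and neither of your two suggested escapes works as stated. The missing ingredient is a general-position lemma and not the full factorization theorem, so no circularity arises: by \cite[Lemma 2.2.7]{Pol12} (restated as \Cref{3 l: finite intersection}), for any Lagrangian $Z$ and ample $L$ the intersection of $Z$ with the graph of $[n]\circ\phi_L$ is finite for all but finitely many $n$. A single application of $\left(\begin{smallmatrix}1&0\\-n\phi_L&1\end{smallmatrix}\right)$ followed by the Fourier--Mukai swap then carries $Z$ to a Lagrangian of $\hat{A}\times A$ meeting $A$ in a finite scheme, where Mukai's Theorem 7.11 (via \Cref{3 e: main example} and \Cref{3 l: vector bundle lagrangians}) identifies it as $\mathbf{S}(E)$ for a simple semi-homogeneous vector bundle $E$; transporting back with \Cref{1 l: tensor S(F)} and \Cref{1 l: Poincare S(F)} gives $Z=\mathbf{S}(\widehat{E}\otimes L^{-n})$, and one still needs \cite[Theorem 1.1]{dJO22} to know this object is a sheaf up to shift before setting $\xi$ equal to its Chern character. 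Without this lemma your iteration has no terminating reduction. Your fallback construction is also not available as written: the projection $Z\to\hat{A}$ need not be an isogeny (take $Z=A\times 0$); the Lagrangian condition identifies $(A\times\hat{A})/Z$ with $\hat{Z}$, not $Z$ with an isogenous copy of $\hat{A}$.

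Your part (1) takes a genuinely different route from the paper, which reduces to vector bundles via $\Phi_{\PP}(F\otimes L)$ and quotes Mukai's explicit description $\mathbf{S}(E)=\mathrm{Im}([m],\phi)$ with $\phi$ symmetric. Your direct approach (isotropy plus a dimension count) can be made to work and is arguably more self-contained, but the isotropy step is imprecise in a way that matters: the obstruction comparing the two composites is not a scalar but a $\Gm$-torsor over the test scheme, namely the value of the biextension $\LL_A$ at the pair of $T$-points, since $t_b^*\PP_\alpha^{-1}$ and $\PP_\alpha^{-1}$ differ by $\PP|_{(b,\alpha)}$. A bimultiplicative \emph{scalar} pairing on a connected proper group is automatically trivial, so the argument as you state it proves nothing; the content is that simplicity of $F$ produces a canonical trivialization of $\LL_A|_{\mathbf{S}(F)\times\mathbf{S}(F)}$ compatible with both partial group laws, which is the scheme-theoretic condition $\hat{i}\psi_{\LL}i=0$. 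Once that is done, your upgrade from isotropic to Lagrangian is correct: $\ker(\hat{i})\simeq\widehat{(A\times\hat{A})/\mathbf{S}(F)}$ is a $g$-dimensional abelian variety, so the closed immersion $\mathbf{S}(F)\hookrightarrow\ker(\hat{i})$ is an isomorphism.
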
 
We begin with some basic facts.

\begin{lem} Let $f:A \times \hat{A} \rightarrow B \times \hat{B}$ be an isomorphism.  Then $f$ is isometric if and only if $(f \times f)^*\LL_{B}\simeq \LL_A$.
\end{lem}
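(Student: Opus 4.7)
My plan is to reduce the equivalence to a matrix identity via the standard correspondence between biextensions and morphisms to the dual. Recall that a biextension $\LL$ of $A \times B$ (equipped with its rigidifications) is uniquely determined by its associated morphism $\psi_\LL: A \to \hat{B}$, and the construction is contravariantly functorial in both factors: for any homomorphisms $f': A' \to A$ and $g': B' \to B$, the pullback biextension $(f' \times g')^*\LL$ corresponds to $\hat{g}' \circ \psi_\LL \circ f'$. Applying this to $f \times f : (A \times \hat{A}) \times (A \times \hat{A}) \to (B \times \hat{B}) \times (B \times \hat{B})$, the lemma reduces to showing that $f$ is isometric if and only if $\hat{f} \circ \psi_{\LL_B} \circ f = \psi_{\LL_A}$ as morphisms $A \times \hat{A} \to \widehat{A \times \hat{A}} \simeq \hat{A} \times A$.

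The next step is to compute these morphisms in matrix form. Writing $\psi_A := \psi_{\LL_A}$, I would use the decomposition $\LL_A = p_{14}^*\PP_A \otimes p_{32}^*\PP_A^{-1}$ together with the fact that the normalized Poincaré bundle $\PP_A$ induces the identity $A \to \widehat{\hat{A}} = A$ under double duality, to conclude that under the canonical identification $\widehat{A \times \hat{A}} \simeq \hat{A} \times A$,
\[
\psi_A = \begin{pmatrix} 0 & -1 \\ 1 & 0 \end{pmatrix}: A \times \hat{A} \to \hat{A} \times A,
\]
and similarly for $\psi_B$. With $f = \begin{pmatrix} x & y \\ z & w\end{pmatrix}$, tracking the Poincaré pairing through $f$ gives $\hat{f} = \begin{pmatrix} \hat{x} & \hat{z} \\ \hat{y} & \hat{w}\end{pmatrix}: \hat{B} \times B \to \hat{A} \times A$.

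Since $f$ is assumed to be an isomorphism and $\psi_A, \psi_B$ are isomorphisms (as $\LL_A$ and $\LL_B$ are symplectic), the condition $\hat{f} \psi_B f = \psi_A$ is equivalent to $\hat{f} = \psi_A f^{-1} \psi_B^{-1}$. Writing $f^{-1} = \begin{pmatrix} a & b \\ c & d\end{pmatrix}$, a direct computation using $\psi_B^{-1} = \begin{pmatrix} 0 & 1 \\ -1 & 0\end{pmatrix}$ gives
\[
\psi_A f^{-1} \psi_B^{-1} = \begin{pmatrix} d & -c \\ -b & a\end{pmatrix},
\]
and this equals $\hat{f}$ precisely when $f^{-1} = \begin{pmatrix} \hat{w} & -\hat{y} \\ -\hat{z} & \hat{x}\end{pmatrix}$, i.e., when $f$ is isometric. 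Once the sign conventions are pinned down (which is the only step requiring any care), the entire argument is a routine $2 \times 2$ matrix calculation; there is no substantive obstacle.
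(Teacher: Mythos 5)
Your proof is correct and takes essentially the same approach as the paper: reduce via the functoriality $\psi_{(f\times f)^*\LL_B}=\hat f\circ\psi_B\circ f$ and then verify the matrix identity. The only (cosmetic) difference is that you work with the canonical identification $\widehat{A\times\hat{A}}\simeq\hat{A}\times A$, whereas the paper swaps coordinates to write everything on $A\times\hat{A}$, so your $\psi_A$ is $\begin{pmatrix}0&-1\\1&0\end{pmatrix}$ while the paper's is $\begin{pmatrix}1&0\\0&-1\end{pmatrix}$; the computations are equivalent.
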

\begin{proof}
  The biextensions $\LL_A$, $\LL_B$, and $(f \times f)^*\LL_B$ are completely determined by their induced homomorphisms:
$$\psi_{A}:A \times \hat{A} \rightarrow \widehat{A \times \hat{A}},\quad (a,a') \mapsto \LL_A\mid_{(a,a')\times A \times \hat{A}}$$
$$\psi_{B}:B \times \hat{B} \rightarrow \widehat{B \times \hat{B}},\quad (b,b')\mapsto \LL_B\mid_{(b,b')\times B \times \hat{B}}$$
$$\phi_{B}:A \times \hat{A} \rightarrow \widehat{A \times \hat{A}},\quad (b,b')\mapsto ((f\times f)^*\LL_B) \mid_{(a,a')\times A \times \hat{A}}.$$
We note that $\phi_B=\hat{f} \circ \psi_B \circ f$, so
$$(f \times f)^*\LL_B \simeq \LL_A \iff \hat{f} \circ \psi_B \circ f =\psi_A \iff \hat{f}\circ \psi_B=\psi_A\circ f^{-1}$$
Letting $f=\begin{pmatrix}
    x & y \\
    z & w
\end{pmatrix}$ and making the identifications, $\widehat{A \times \hat{A}}\simeq A \times \hat{A}$, $\widehat{B \times \hat{B}}\simeq B\times \hat{B}$, we may write
$$\hat{f}=\begin{pmatrix}
    \hat{w} & \hat{y} \\
    \hat{z} & \hat{x}
\end{pmatrix},\quad f^{-1}= \begin{pmatrix}
 \alpha & \beta \\
 \gamma & \delta
\end{pmatrix}$$
and
$$\psi_A=\begin{pmatrix}
    1 & 0 \\
    0 & -1
\end{pmatrix}, \quad \psi_B=\begin{pmatrix}
    1 & 0 \\
    0 & -1
\end{pmatrix}.$$
Multiplying matrices, we obtain the desired result.
\end{proof}

\begin{lem}\label{3 l: image of lagrangians} Let $f:A \times \hat{A}\rightarrow B \times \hat{B}$ be an isometric isomorphism.  Let $Z \subseteq A \times \hat{A}$ be an abelian subvariety.  Then $Z$ is isotropic (resp.~ Lagrangian, resp.~ split Lagrangian) if and only if $f(Z)$ is.
\end{lem}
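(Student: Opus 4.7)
The plan is to bootstrap from the preceding lemma, which characterizes $f$ being isometric by the identity $\hat{f} \circ \psi_B \circ f = \psi_A$ (equivalently, $(f \times f)^*\LL_B \simeq \LL_A$). Each of the three properties---isotropy, Lagrangian, split Lagrangian---is defined purely in terms of the biextension through its associated homomorphism, so this compatibility should transport them all directly; the argument is essentially a diagram chase.

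First I would fix notation: let $i \colon Z \hookrightarrow A \times \hat{A}$ denote the inclusion, let $g \colon Z \xrightarrow{\sim} f(Z)$ denote the restriction of $f$ (an isomorphism since $f$ is), and let $j \colon f(Z) \hookrightarrow B \times \hat{B}$ denote the inclusion, so that $f \circ i = j \circ g$ and dually $\hat{i} \circ \hat{f} = \hat{g} \circ \hat{j}$. In particular, $\hat{f}$ restricts to an isomorphism $\ker(\hat{j}) \xrightarrow{\sim} \ker(\hat{i})$.

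For isotropy, I would use the identity $\hat{f} \circ \psi_B \circ f = \psi_A$ to compute
\[
\hat{i} \circ \psi_A \circ i \;=\; \hat{i} \circ \hat{f} \circ \psi_B \circ f \circ i \;=\; \hat{g} \circ (\hat{j} \circ \psi_B \circ j) \circ g,
\]
and then observe that since $g$ and $\hat{g}$ are isomorphisms, the left side vanishes iff $\hat{j} \circ \psi_B \circ j$ does. For the Lagrangian claim, assuming isotropy, the same identity exhibits $\psi_A|_Z \colon Z \to \ker(\hat{i})$ as the composition of the isomorphism $g$, the map $\psi_B|_{f(Z)} \colon f(Z) \to \ker(\hat{j})$, and the isomorphism $\hat{f}|_{\ker(\hat{j})}$, so one is an isomorphism iff the other is. For the split case, once $Z$ (and hence $f(Z)$) is Lagrangian, the projections $p_Z = \hat{i} \circ \psi_A \colon A \times \hat{A} \to \hat{Z}$ and $p_{f(Z)} = \hat{j} \circ \psi_B \colon B \times \hat{B} \to \widehat{f(Z)}$ satisfy $p_Z = \hat{g} \circ p_{f(Z)} \circ f$, so the rule $\sigma \mapsto f \circ \sigma \circ \hat{g}$ gives a bijection between sections of $p_Z$ and sections of $p_{f(Z)}$.

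There is no real obstacle here; the whole lemma is contained in the preceding characterization of isometric isomorphisms. The only thing to watch is bookkeeping the implicit identifications $\widehat{A \times \hat{A}} \simeq A \times \hat{A}$ and $\widehat{B \times \hat{B}} \simeq B \times \hat{B}$ used when writing $\psi_A$ and $\psi_B$ in matrix form.
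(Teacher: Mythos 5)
Your proposal is correct and follows essentially the same route as the paper: both start from the isometry identity $\hat{f} \circ \psi_B \circ f = \psi_A$ and chase the restriction to $Z$ and $f(Z)$. The paper organizes the argument as an isomorphism between the two sequences $0 \to Z \to A\times\hat{A} \xrightarrow{\ \hat{i}\psi_A\ } \hat{Z} \to 0$ and $0 \to f(Z) \to B\times\hat{B} \xrightarrow{\ \hat{j}\psi_B\ } \widehat{f(Z)} \to 0$, and reads off isotropy, Lagrangian, and split Lagrangian as, respectively, the vanishing of $\hat{i}\psi_A i$, exactness, and split exactness of the top sequence, all transported to the bottom; you instead verify each condition directly (factoring $\psi_A|_Z$ through $\psi_B|_{f(Z)}$ and giving the explicit bijection on sections), which is a slightly more hands-on presentation of the same diagram chase.
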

\begin{proof}
    Let $h:Z \rightarrow f(Z)$ denote the isomorphism obtained by restricting $f$ to $Z$.  Since $f$ is isometric, we have that $\hat{f}\psi_B f =\psi_A$.  Hence, we have a commutative diagram
    $$
\begin{tikzcd}
Z \arrow[r, "i"] \arrow[d, "h"] & A \times \hat{A} \arrow[r, "\psi_A"] \arrow[d, "f"] & B \times \hat{B} \arrow[r, "\hat{i}"] \arrow[d, "\hat{f}^{-1}"] & \hat{Z} \arrow[d, "\hat{h}^{-1}"] \\
f(Z) \arrow[r, "j"]        & B \times \hat{B} \arrow[r, "\psi_B"]                & A \times \hat{A} \arrow[r, "\hat{j}"]                           & \widehat{f(Z)}                   
\end{tikzcd},$$
where $i$ and $j$ denotes the inclusions, and the vertical arrows are isomorphisms.  Combining arrows, we obtain a commutative diagram
$$
\begin{tikzcd}
0 \arrow[r] & Z \arrow[r, "i"] \arrow[d, "h"] & A \times \hat{A} \arrow[r, "\hat{i}\psi_A"] \arrow[d, "f"] & \hat{Z} \arrow[d, "\hat{h}^{-1}"] \arrow[r] & 0 \\
0 \arrow[r] & f(Z) \arrow[r, "j"]        & B \times \hat{B} \arrow[r, "\hat{j}\psi_B"]                & \widehat{f(Z)} \arrow[r]                    & 0
\end{tikzcd},$$
where the vertical arrows are isomorphisms.
We see that $\hat{i}\psi_Ai=0$ if and only if $\hat{j}\psi_B j=0$.  Moreover, the top sequence is short (resp.~ split) exact if and only if the bottom is.
\end{proof}
\begin{ex}\label{3 e: main example} When $k$ is algebraically closed, these examples are given in \cite[Examples 2.2.4]{Pol12}.  The same arguments establish them in general.
\begin{enumerate}
    \item   Let $\phi: A \to \hat{A}$ be a symmetric homomorphism, and let $m \in \mathbb{Z}$.  Then the scheme-theoretic image of the morphism $([m],\phi):A \to A \times \hat{A}$ is a Lagrangian abelian subvariety.
    \item Let $Z \subseteq A \times \hat{A}$ be a Lagrangian abelian subvariety, and suppose that the scheme-theoretic intersection $Z \cap A$ is finite.  Then there exists a symmetric homomorphism $\phi:A \to \hat{A}$ and integer $m \in \mathbb{Z}$ such that $Z=\text{Im}([m],\phi)$.
\end{enumerate}
\end{ex}
Using \Cref{3 e: main example}, we can establish \Cref{3 p: lagrangian classification} for vector bundles.  More precisely:

\begin{lem}\label{3 l: vector bundle lagrangians}
    Let $A$ be an abelian variety over a field $k$ and let $X$ be an $A$-torsor.  Then
    \begin{enumerate}
        \item If $\xi \in \text{CH}_{\text{num}}(X_{\overline{k}})_{\mathbb{Q}}$ is such that $\mathcal{SSH}^{\xi}_{X/k}(\overline{k})$ contains a vector bundle, then $\mathbf{S}(\xi)\subseteq A \times \hat{A}$ is a Lagrangian abelian subvariety whose intersection with $\hat{A}$ is finite.
        \item If $Z \subseteq A \times \hat{A}$ is a Lagrangian abelian subvariety such that $Z \cap \hat{A}$ is finite, then there exists some $\xi \in  \text{CH}_{\text{num}}(X_{\overline{k}})_{\mathbb{Q}}$ such that $\mathcal{SSH}^{\xi}_{X/k}(\overline{k})$ contains a vector bundle and $Z=\mathbf{S}(\xi)$.
    \end{enumerate}
    \end{lem}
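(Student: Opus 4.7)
The plan is to reduce both statements to Mukai's classification of simple semi-homogeneous vector bundles in \cite{Mu78}, combined with the description of Lagrangian subvarieties in \Cref{3 e: main example}. By \Cref{1 l: reduction} and base change, we may assume $k = \overline{k}$ and $X = A$ throughout.

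For (1), fix a simple semi-homogeneous vector bundle $F$ on $A$ of Chern character $\xi$; by the first corollary to \Cref{1 p:torsor} we have $\mathbf{S}(\xi) = \mathbf{S}(F)$, a $g$-dimensional abelian subvariety of $A \times \hat{A}$. Mukai's classification (see \cite[Section 7]{Mu78}) identifies $\mathbf{S}(F)$ with the scheme-theoretic image of a morphism $([m], \phi) \colon A \to A \times \hat{A}$ for some nonzero integer $m$ and symmetric homomorphism $\phi \colon A \to \hat{A}$. Part (1) of \Cref{3 e: main example} then immediately gives that $\mathbf{S}(F)$ is Lagrangian, and $\mathbf{S}(F) \cap \hat{A} = \phi(A[m])$ is finite.

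For (2), let $Z \subseteq A \times \hat{A}$ be a Lagrangian abelian subvariety with $Z \cap \hat{A}$ finite. The morphism
$$\sigma = \begin{pmatrix} 0 & -1 \\ 1 & 0 \end{pmatrix} \colon A \times \hat{A} \to \hat{A} \times A$$
is readily verified to be isometric, so by \Cref{3 l: image of lagrangians} the image $\sigma(Z) \subseteq \hat{A} \times A$ is Lagrangian with $\sigma(Z) \cap \hat{A}$ finite, where $\hat{A}$ denotes the first factor (playing the role of ``$A$'' in writing $\hat{A} \times A = B \times \hat{B}$ with $B = \hat{A}$). Part (2) of \Cref{3 e: main example}, applied with $\hat{A}$ in place of $A$, yields $\sigma(Z) = \text{Im}([m], \psi)$ for some symmetric $\psi \colon \hat{A} \to A$ and some $m \in \mathbb{Z}$; the finiteness of the intersection forces $m \neq 0$. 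The existence half of Mukai's classification then produces a simple semi-homogeneous vector bundle $F$ on $A$---constructed, up to twist by a line bundle, as the pushforward of a nondegenerate line bundle along an isogeny $B \to A$ determined by the data $(m, \psi)$---whose stabilizer is exactly $Z$. Setting $\xi := \text{ch}(F)$ completes the proof.

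The main obstacle is the construction step in (2): given the Lagrangian data, one must produce a simple semi-homogeneous vector bundle whose stabilizer is precisely $Z$ rather than merely an isogenous $g$-dimensional subgroup, and one must verify simplicity of the output. Both points rely on the explicit form of Mukai's construction in \cite[Sections 6--7]{Mu78}, which is exactly tailored to give a bijective correspondence between numerical Lagrangian data and isomorphism classes of simple semi-homogeneous vector bundles up to translation and twist by $\text{Pic}^0$.
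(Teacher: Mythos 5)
Part (1) of your argument is correct and coincides with the paper's: quote Mukai's classification to write $\mathbf{S}(F)=\text{Im}([m],\phi)$, then apply \Cref{3 e: main example}(1) and read off the finiteness of $\mathbf{S}(F)\cap\hat{A}$.

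Part (2) has a genuine gap at the decisive step. Your detour through $\sigma$ is a sensible way to make the hypothesis match \Cref{3 e: main example}(2) (which, as stated, assumes finiteness of the intersection with the \emph{first} factor), but it leaves you holding $\sigma(Z)=\text{Im}([m],\psi)$ with $\psi:\hat{A}\to A$ symmetric --- that is, the stabilizer data of a simple semi-homogeneous bundle on $\hat{A}$, not on $A$. Mukai's existence theorem \cite[Theorem 7.11]{Mu78} takes as input a class $\delta=[L]/m'\in\text{NS}(A)\otimes\mathbb{Q}$ and returns a bundle on $A$ whose stabilizer is $\text{Im}([m'],\phi_L)\subseteq A\times\hat{A}$; the pair $(m,\psi)$ is not such an input, so the claim that it ``determines'' a vector bundle on $A$ with stabilizer exactly $Z$ is precisely what remains to be proved. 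There are two ways to close the gap. (i) Convert back to a graph over $A$: since $Z\cap\hat{A}$ is finite, the projection $Z\to A$ is an isogeny, so $Z=\text{Im}([m'],\phi_L)$ for some $m'\neq 0$ and some line bundle $L$ on $A$; then feed $\delta=[L]/m'$ into Mukai's theorem. This is the paper's route, which never passes through $\sigma$. (ii) Build a bundle $E$ on $\hat{A}$ with $\mathbf{S}(E)=\sigma(Z)$ and transport it back via $\Phi_{\mathscr{P}}$ using \Cref{1 l: Poincare S(F)} --- but that only yields a simple semi-homogeneous \emph{sheaf} on $A$, not a vector bundle, and the vector-bundle conclusion is exactly the content distinguishing this lemma from \Cref{3 p: lagrangian classification} (whose proof has to tensor by an ample line bundle for precisely this reason). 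A further small slip: finiteness of $Z\cap\hat{A}$ does not force $m\neq 0$; the case $m=0$ gives $Z=A\times 0=\mathbf{S}(\mathcal{O}_X)$, which is consistent with the statement but not with your parenthetical.
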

    \begin{proof}
        We begin with (1).  It suffices to prove this after base change, so we can assume that $k=\overline{k}$  and $X=A$.  By assumption, there exists a simple semi-homogeneous vector bundle $E$ on $A$ such that $\text{ch}(E)=\xi$.  By \cite[Theorem 7.11 (3)]{Mu78}, we know that there exists a symmetric homomorphism $\phi:A \to \hat{A}$ and an integer $m \in \mathbb{Z}$ such that $\mathbf{S}(\xi)=\mathbf{S}(E)=\text{Im}([m],\phi)$.  It's clear that $S(\xi) \cap \hat{A}$ is finite.

        It suffices to prove (2) after base change, so we may assume that $k=\overline{k}$ and $X=A$.  By \Cref{3 e: main example}, there exists a symmetric homomorphism $\phi:A \to \hat{A}$ and an integer $m \in \mathbb{Z}$ such that $Z=\text{Im}([m],\phi)$.  We may write $\phi=\phi_{L}$ for some line bundle $L$ on $A$.  Let $\delta=\frac{[L]}{m} \in \text{NS}(A)\otimes \mathbb{Q}$.  By \cite[Theorem 7.11]{Mu78}, there exists a simple semi-homogeneous vector bundle $E$ on $A$ such $\mathbf{S}(E)=Z$.  Let $\xi=\text{ch}(E) \in \text{CH}_{\text{num}}(A)_{\mathbb{Q}}$.
    \end{proof}
Using \Cref{3 l: vector bundle lagrangians} together with the following immediate corollary of \cite[Lemma 2.2.7]{Pol12}, we will obtain a proof of \Cref{3 p: lagrangian classification}.

\begin{lem}\label{3 l: finite intersection}
    Let $A$ be an abelian variety over a field $k$, let $L$ be an ample line bundle on $A$, and let $Z \subseteq{A \times \hat{A}}$ be a Lagrangian abelian subvariety.  Then for all but finitely many $n \in \mathbb{Z}$, the intersection of graph of $[n]\circ \phi_L$ with $Z$ is finite.
\end{lem}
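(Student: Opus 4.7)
The plan is to reduce finiteness of $Z \cap \Gamma([n]\phi_L)$ to non-vanishing of a polynomial in $n$, and then to extract the existence of a single good $n$ from [Pol12, Lemma 2.2.7].

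First I would observe that by \Cref{3 e: main example}(1), the graph $\Gamma_n := \Gamma([n]\phi_L) \subseteq A \times \hat{A}$ is a Lagrangian abelian subvariety, since $[n]\phi_L$ is symmetric. Denoting the two projections from $A \times \hat{A}$ by $\pi_1$ and $\pi_2$ and setting
\[\psi_n := \pi_2|_Z - n \cdot (\phi_L \circ \pi_1|_Z) \colon Z \to \hat{A},\]
a point $(a,\alpha)\in Z$ lies in $\Gamma_n$ precisely when $\alpha = n\phi_L(a)$, so $Z \cap \Gamma_n = \ker(\psi_n)$. Since $Z$ is Lagrangian in $A\times\hat{A}$, $\dim Z = g = \dim \hat{A}$, and therefore $\ker(\psi_n)$ is finite if and only if $\psi_n$ is an isogeny.

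Next, I would pass to $\ell$-adic Tate modules for some prime $\ell \neq \text{char}(k)$. Choosing $\mathbb{Z}_\ell$-bases of $T_\ell Z$ and $T_\ell \hat{A}$, the map $T_\ell \psi_n = T_\ell(\pi_2|_Z) - n \cdot T_\ell(\phi_L \circ \pi_1|_Z)$ is represented by a matrix whose entries are integer-linear in $n$. Its determinant $P(n)$ is therefore a polynomial of degree at most $2g$, and $\psi_n$ is an isogeny if and only if $P(n) \neq 0$. Provided $P \not\equiv 0$, this fails for at most finitely many $n$, which is exactly the desired conclusion.

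It remains to rule out $P \equiv 0$, i.e., to exhibit at least one integer $n_0$ for which $Z \cap \Gamma_{n_0}$ is finite. This is the role of [Pol12, Lemma 2.2.7], which, combined with the fact that $\phi_L$ is an isogeny since $L$ is ample, produces such an $n_0$. I expect the only real work in the proof to lie in matching the precise formulation of [Pol12, Lemma 2.2.7] with the existence of this $n_0$; the polynomial-in-$n$ argument above is the reason the author can describe the statement as an \emph{immediate} corollary, since the hard content is reduced to a single instance of finite intersection.
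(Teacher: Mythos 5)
Your proposal is correct, but it takes a genuinely different route from the paper. The paper's proof is pure transport of structure: it applies the isometric isomorphism $f_{\mathscr{P}}=\left(\begin{smallmatrix}0&-1\\1&0\end{smallmatrix}\right)$ to move $Z$ and the graphs $\Gamma([n]\phi_L)$ into the exact configuration treated by \cite[Lemma 2.2.7]{Pol12}, invokes \Cref{3 l: image of lagrangians} to see that $f_{\mathscr{P}}(Z)$ is still Lagrangian, and then cites Polishchuk's lemma, which already asserts the full ``all but finitely many $n$'' conclusion. So the ``only real work'' is not producing a single good $n_0$ -- Polishchuk's lemma hands you the entire statement -- but rather the change of coordinates needed to match his conventions, which your write-up does not address. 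Your Tate-module argument is nevertheless sound: $Z\cap\Gamma([n]\phi_L)=\ker(\psi_n)$ as group schemes, $\psi_n$ is an isogeny iff $\det(T_\ell\psi_n)\neq 0$ for $\ell\neq\operatorname{char}(k)$, and this determinant is a polynomial of degree at most $2g$ in $n$, so finiteness for one $n_0$ propagates to all but at most $2g$ values of $n$. What your approach buys is a quantitative bound on the exceptional set and a weaker dependence on \cite{Pol12} (only a single instance of finite intersection is needed, rather than the full statement); what it costs is redundancy -- the polynomial step reproves something the cited lemma already contains -- and it still leaves you with the same bookkeeping task of matching Polishchuk's formulation that the paper's $f_{\mathscr{P}}$-conjugation is designed to carry out.
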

\begin{proof}
    We may suppose that $k=\overline{k}$.  Let $\Gamma(n\phi_L)$ denote the graph of $[n]\circ \phi_L$.  $Z \cap \Gamma(n\phi_L)$ is finite if and only if $f_\PP(Z) \cap f_{\PP}( \Gamma(n\phi_L))$ is finite, where 
    $$f_{\PP}=\begin{pmatrix}
        0 & -1 \\
        1 & 0 \end{pmatrix}:A \times \hat{A} \to \hat{A} \times \hat{A}.$$
       $f_{\PP}(\Gamma(n\phi_L))$ is the graph of $[-n] \circ \phi_L$, and by \Cref{3 l: image of lagrangians}, $f_\PP(Z)$ is a Lagrangian abelian subvariety of $A \times \hat{A}$.  The result now follows from \cite[Lemma 2.2.7]{Pol12}
\end{proof}

\begin{proof}[Proof of \Cref{3 p: lagrangian classification}]
    We begin with $(1)$.  It suffices to prove this after base change, so we may assume that $k=\overline{k}$ and $X=A$.  Let $F$ be a simple semi-homogeneous sheaf on $A$ such that $\text{ch}(F)=\xi$.  Then for any sufficiently ample line bundle $L$, $\Phi_{\PP}(F \otimes L)=\widehat{F\otimes L}$ is a simple semi-homogeneous vector bundle on $\hat{A}$.  By \Cref{3 l: vector bundle lagrangians}, $\mathbf{S}(\widehat{F \otimes L}) \subseteq \hat{A} \times A$ is a Lagrangian.  By \Cref{1 l: tensor S(F)} and \Cref{1 l: Poincare S(F)}, there exists an isometric isomorphism $\hat{A} \times A \to A \times \hat{A}$ identifying $\mathbf{S}(\widehat{F\otimes L})$ with $\mathbf{S}(F)$.  It follows from \Cref{3 l: image of lagrangians} that $\mathbf{S}(F)\subseteq A\times \hat{A}$ is a Lagrangian abelian subvariety. 

    It suffices to prove (2) after base change, so we may assume that $k=\overline{k}$ and $X=A$.  By \Cref{3 l: finite intersection}, there exists an ample line bundle $L$ on $A$ such that $Z$ has finite intersection of $A$ with the graph of $\phi_{L}$.  In particular, the image of $Z$ under the isometric isomorphism
    $$g=\begin{pmatrix}
    0 & -1 \\
    1 & 0
    \end{pmatrix} \circ \begin{pmatrix}
        1 & 0 \\
    -\phi_L & 1
    \end{pmatrix}:A \times \hat{A} \to \hat{A} \times A$$
    has finite intersection with $A$.  By \Cref{3 l: vector bundle lagrangians}, there exists a simple semi-homogeneous vector bundle on $\hat{A}$ such that $g(Z)=\mathbf{S}(E)$.  Using \Cref{1 l: Poincare S(F)} and \Cref{1 l: tensor S(F)}, we see that $Z=\mathbf{S}(\hat{E}\otimes L^{-1})$.  Since $\widehat{E} \otimes L^{-1}$ has $g$-dimensional stabilizer, we may apply \cite[Theorem 1.1]{dJO22} to obtain that it is a simple semi-homogeneous sheaf up to shift.  After shifting if necessary, we can suppose that $\hat{E}\otimes L^{-1}$ is a sheaf.  Let $\xi=\text{ch}(\hat{E}\otimes L^{-1})$.
\end{proof}
\begin{cor}\label{3 c: image of isometric iso}
    Let $f:A \times \hat{A} \to B \times \hat{B}$ be an isometric isomorphism, and let $Y$ be a $B$-torsor.  Then there exists some $\zeta \in \text{CH}_{\text{num}}(Y_{\overline{k}})$ such that $\mathcal{SSH}^\zeta_{Y/k}$ is nonempty and such that $f$ restricts to an isomorphism $\hat{A} \xrightarrow{\sim} \mathbf{S}(\zeta)$; in particular, $A \simeq B(\zeta)$ for this $\zeta$.
\end{cor}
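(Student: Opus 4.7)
The plan is to apply Proposition \ref{3 p: lagrangian classification}(2) directly to the image $f(\hat{A}) \subseteq B \times \hat{B}$. First I would recall that $\hat{A}$ is a (split) Lagrangian abelian subvariety of $A \times \hat{A}$, as noted in the text immediately after the definition of $\LL_A$. Since $f$ is an isometric isomorphism, Lemma \ref{3 l: image of lagrangians} implies that $f(\hat{A}) \subseteq B \times \hat{B}$ is also a Lagrangian abelian subvariety. Now applying Proposition \ref{3 p: lagrangian classification}(2) with the abelian variety $B$ and the $B$-torsor $Y$, there exists $\zeta \in \text{CH}_{\text{num}}(Y_{\overline{k}})_{\mathbb{Q}}$ such that $\mathcal{SSH}^\zeta_{Y/k}$ is nonempty and $\mathbf{S}(\zeta) = f(\hat{A})$. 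Hence $f$ restricts to an isomorphism $\hat{A} \xrightarrow{\sim} \mathbf{S}(\zeta)$, as required.

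For the final assertion $A \simeq B(\zeta)$, I would simply pass to quotients. Under the first projection, $(A \times \hat{A})/\hat{A} \simeq A$, and by definition $B(\zeta) = (B \times \hat{B})/\mathbf{S}(\zeta)$. Because $f$ is an isomorphism of group schemes that carries $\hat{A}$ isomorphically onto $\mathbf{S}(\zeta)$, it descends to an isomorphism on the quotients, yielding $A \simeq (A \times \hat{A})/\hat{A} \xrightarrow{\sim} (B \times \hat{B})/\mathbf{S}(\zeta) = B(\zeta)$. There is no real obstacle here — all the difficulty has been absorbed into the classification of Lagrangians in Proposition \ref{3 p: lagrangian classification} and the functoriality recorded in Lemma \ref{3 l: image of lagrangians}; the corollary is essentially the observation that the standard split Lagrangian $\hat{A}$ is transported by $f$ into the Lagrangian realizing some moduli space of simple semi-homogeneous sheaves on $Y$.
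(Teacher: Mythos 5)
Your proof is correct and follows exactly the same route as the paper: note that $f(\hat{A})$ is Lagrangian by \Cref{3 l: image of lagrangians} and apply \Cref{3 p: lagrangian classification}. You have merely spelled out the final deduction $A \simeq B(\zeta)$ by passing to quotients, which the paper leaves implicit under ``in particular.''
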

\begin{proof}
    Note that $f(\hat{A})$ is Lagrangian by \Cref{3 l: image of lagrangians} and apply \Cref{3 p: lagrangian classification}.
\end{proof}

\begin{rem}\label{3 r: factorization}
    An essential ingredient in the proof of \cite[Theorem 4.13]{Or02} is the claim that any isometric isomorphism
    $$f=\begin{pmatrix}
        x & y \\
        z & w
    \end{pmatrix}:A \times \hat{A} \to B \times \hat{B}$$
    can be written the composite of two isometric isomorphisms
    $$f_1=\begin{pmatrix}
        x_1 & y_1 \\
        z_1 & w_1
    \end{pmatrix}:A \times \hat{A} \to C \times \hat{C}, \quad f_2=\begin{pmatrix}
        x_2 & y_2 \\
        z_2 & w_2
    \end{pmatrix}:C \times \hat{C} \to B \times \hat{B}$$
    such that $y_1$ and $y_2$ are isogenies.  As was noted in \cite{MP17}, this is not easy to see, and a proof has not been provided anywhere in the literature.  We note here that this result follows from applying \cite[Lemma 2.2.7(ii)]{Pol12} to the Lagrangian abelian subvariety $f(\hat{A})$.  Doing so, we obtain the desired factorization of $f$ with
    $$f_1=\begin{pmatrix}
        1 & -\phi_L \\
        0 & 1
    \end{pmatrix} \circ f \quad f_2=\begin{pmatrix}
        1 & \phi_L \\
        1 & 0
    \end{pmatrix},$$
    where $L$ is some sufficiently ample line bundle on $\hat{A}$.
\end{rem}

\section{Isometric Isomorphisms and Derived Equivalences}\label{4 SECTION}
First we show how to produce isometric isomorphisms from derived equivalences between torsors.
\begin{prop}\label{4 p: induced iso}
    Let $A$ and $B$ be abelian varieties over a field $k$.  Let $X$ be an $A$-torsor, let $Y$ be a $B$-torsor, and let $\Phi_{\EE}:\Db(X) \to \Db(Y)$ be an equivalence.  Then, up to shift, $\EE$ is a simple semi-homogeneous sheaf on $X \times Y$, and $\mathbf{S}(\EE)$ is the graph of an isomorphism $g:A \times \hat{A} \to B \times \hat{B}$ such that $g \circ(\text{id},[-1]):A \times \hat{A} \to B \times \hat{B}$ is isometric.  We let $f_{\EE}=g \circ(\text{id},[-1])$.
\end{prop}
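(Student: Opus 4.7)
The plan is to follow the strategy of \cite[Corollary 2.13]{Or02}, exploiting the fact that $X \times Y$ is itself a torsor under the abelian variety $A \times B$ so that the argument of \Cref{2 t: converse} can be applied to $\EE$ as an object of $\Db(X \times Y)$.

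The first step is to verify the hypotheses of \cite[Theorem 1.1, Remark 1.3, (6.1)]{dJO22}. Since $\Phi_{\EE}$ is an equivalence, $R\Hom_{X \times Y}(\EE, \EE)$ is identified with the Hochschild cohomology $\text{HH}^{\bullet}(X)$; this yields $\text{End}(\EE) = k$ (simplicity), $\Ext^{<0}(\EE, \EE) = 0$, and $\dim \Ext^1(\EE, \EE) = 2g$, where the last equality comes from identifying $\text{HH}^1(X)$ with the tangent space at the identity of $\text{Autoeq}^0(\Db_{\text{dg}}(X)) \simeq A \times \hat{A}$ (as in the proof of \Cref{2 t: converse}, via \cite{CNS22}, \cite{TV07}, and \cite{Ros09}). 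For the stabilizer dimension bound, the same autoequivalence picture applied to $\Phi_{\EE}$ produces, via conjugation, a group isomorphism $h : A \times \hat{A} \xrightarrow{\sim} B \times \hat{B}$. Translating the intertwining relation $\Phi_{\EE}(t_a^*(-) \otimes \PP_{\alpha}^{-1}) \simeq t_b^* \Phi_{\EE}(-) \otimes \PP_{\beta}^{-1}$ (for $h(a, \alpha) = (b, \beta)$) into the Fourier--Mukai kernel calculus produces a $2g$-parameter family of isomorphisms of the form $t_{(a', b')}^* \EE \otimes \PP_{(\alpha', \beta')}^{-1} \simeq \EE$, giving $\dim \mathbf{S}(\EE) \geq 2g$.

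Applying \cite[Theorem 1.1, Remark 1.3, (6.1)]{dJO22} to $\EE$ on $(X \times Y)_{\overline{k}}$ (and descending to $k$) exactly as in the proof of \Cref{2 t: converse} then upgrades $\EE$ to a simple semi-homogeneous sheaf on $X \times Y$ up to shift. The intertwining computation above, combined with the now-established equality $\dim \mathbf{S}(\EE) = 2g$, shows that the projection $\mathbf{S}(\EE) \to A \times \hat{A}$ is an isomorphism, so $\mathbf{S}(\EE)$ is the graph of a unique homomorphism $g : A \times \hat{A} \to B \times \hat{B}$ (expressible as $h$ composed with sign twists stemming from the $\PP^{-1}$ convention in the action defining $\mathbf{S}$).

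To conclude that $f_{\EE} := g \circ (\id, [-1])$ is isometric, I would invoke \Cref{3 p: lagrangian classification}(1): because $\EE$ is simple semi-homogeneous on the $A \times B$-torsor $X \times Y$, its stabilizer $\mathbf{S}(\EE) \subseteq (A \times B) \times (\hat{A} \times \hat{B})$ is a Lagrangian abelian subvariety with respect to $\LL_{A \times B}$. Using the splitting $\LL_{A \times B} \simeq \LL_A \boxtimes \LL_B$ under the reordering $(A \times B) \times (\hat{A} \times \hat{B}) \simeq (A \times \hat{A}) \times (B \times \hat{B})$, the isotropic condition $\hat{i}\psi_{A \times B} i = 0$ for the inclusion $i$ of the graph of $g$ translates to $\hat{g}\psi_B g = -\psi_A$; combined with the identity $\widehat{(\id, [-1])} = ([-1], \id)$ arising from the double-dual plus swap identification, a short matrix calculation then yields $\hat{f_{\EE}}\psi_B f_{\EE} = \psi_A$, i.e., the isometry of $f_{\EE}$. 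The main technical obstacle is accurately tracking the sign conventions (the $\PP^{-1}$ appearing in the definition of $\mathbf{S}$, the $[-1]$ twist in $f_{\EE}$, and the non-triviality of the identifications for $\psi_A$ and for dualizing block matrices); beyond this bookkeeping, the geometric content is entirely supplied by the earlier sections.
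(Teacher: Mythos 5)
Your argument is correct in outline and reaches the stated conclusion, but it takes a genuinely different route from the paper's. The paper's proof is much shorter: it quotes \cite[Proposition 3.2]{Or02} to reduce $\EE$ to a single degree, invokes \Cref{2 t: converse} to see that $\EE$ is an $X$-flat family of simple semi-homogeneous sheaves on $Y$ (hence a simple sheaf on $X\times Y$), and then, after base change to $\overline{k}$, cites \cite[Corollary 2.13]{Or02} verbatim; that corollary already supplies the isometric isomorphism $f$ together with the identification of $\mathbf{S}(\EE)(\overline{k})$ with $(\text{id},\text{id},[-1],\text{id})(\Gamma_f)(\overline{k})$, and a dimension count then identifies $\mathbf{S}(\EE)$ with that graph as abelian subvarieties (which also gives semi-homogeneity directly from \Cref{1 d: semi-homog}, with no appeal to \cite{dJO22}). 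You instead rebuild Orlov's corollary from the paper's internal machinery: Hochschild cohomology for $\Ext^{\leq 1}(\EE,\EE)$, the autoequivalence-group isomorphism (via \cite{CNS22}, \cite{TV07}, \cite{Ros09}, as in the proof of \Cref{2 t: converse}) for the bound $\dim\mathbf{S}(\EE)\geq 2g$, \cite[Theorem 1.1]{dJO22} for semi-homogeneity of $\EE$ on $X\times Y$, and \Cref{3 p: lagrangian classification}(1) plus the block splitting $\psi_{A\times B}=\psi_A\oplus\psi_B$ for the isometry. What your route buys is independence from \cite[Corollary 2.13]{Or02} and an explicit link to the Lagrangian formalism of \Cref{3 SECTION}; what it costs is that the passage from the conjugation isomorphism to kernel-level isomorphisms (which requires uniqueness of Fourier--Mukai kernels and is precisely the content of \cite[Proposition 2.12]{Or02}) and the sign bookkeeping must be carried out by hand. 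The signs do work out under the paper's conventions: with $\sigma=(\text{id},[-1])$ one has $\hat{\sigma}=-\sigma$ and $\psi_A=\sigma$ as matrices, so isotropy of $\Gamma_g$ gives $\hat{g}\psi_Bg=-\psi_A$ and hence $\hat{f}\psi_Bf=\hat{\sigma}(-\psi_A)\sigma=\psi_A$. One small point to make explicit: to upgrade the containment of your $2g$-parameter family in $\mathbf{S}(\EE)$ to an equality of subgroup schemes, you should note that $\mathbf{S}(\EE)$ is a ($2g$-dimensional, connected, reduced) abelian variety, which is \Cref{1 c: S(F) is abelian variety} applied to $\EE$ on the $A\times B$-torsor $X\times Y$.
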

\begin{proof}
    First we note that $\EE$ is concentrated in a single degree since this is true when $k$ is algebraically closed by \cite[Proposition 3.2]{Or02}.  Shifting if necessary, we assume that $\EE$ is a concentrated in degree 0.  By \Cref{2 t: converse}, $\EE$ is an $X$-flat family of simple semi-homogeneous sheaves on $Y$.  This implies that $\EE$ is a simple sheaf on $X \times Y$.  We claim that the subgroup $\mathbf{S}(\EE)\subset A \times B \times \hat{A} \times \hat{B}$ is the graph of an  isomorphism $g:A \times \hat{A} \to B \times \hat{B}$ such that $g \circ (\text{id}, [-1])$ is isometric.  It suffices to show this after base change, so we may assume that $k=\overline{k}$, $X=A$, and $Y=B$.  By \cite[Corollary 2.13]{Or02}, there exists an isometric isomorphism $f:A \times \hat{A} \to B \times \hat{B}$ such that for any $(a,\alpha) \in A(k) \times \hat{A}(k)$, $f(a,\alpha)=(b,\beta)$ if and only if $(a,b,-\alpha,\beta) \in \mathbf{S}(\EE)(k)$.  It follows that $\text{dim}(\mathbf{S}(\EE))=\text{dim}(A \times B)$, so $\mathbf{S}(\EE)$ is an abelian subvariety of $A \times B \times \hat{A} \times \hat{B}$ equal to $(\text{id},\text{id},[-1],\text{id})\left(\Gamma_{f} \right)$.  We must have $g=f \circ (\text{id},[-1])$. 
    \end{proof}

\begin{rem}
Using methods quite different from ours, an isometric isomorphism is associated to a derived equivalence between two $A$-torsors in \cite[Theorem 5.1]{AKW17}.  It's not clear how our isomorphism relates to theirs.
\end{rem}
\begin{ex}\label{4 e: tensor}
    Let $A$ be an abelian variety over a field $k$, let $L$ be a line bundle on an $A$-torsor $X$.  Then the isometric isomorphism associated to the equivalence $L\otimes(-)=\Phi_{\Delta_{*}L}:\Db(X) \to \Db(X)$ is given by 
    $$\begin{pmatrix}
        1 & 0 \\
        \phi_L & 1
    \end{pmatrix}:A \times \hat {A} \to A \times \hat{A}.$$
\end{ex}

\begin{ex}\label{4 e: counterexample}
    Let $A$ be an abelian variety over a field $k$.  We have a short exact sequence of group schemes
    $$0 \to \hat{A} \to \text{Pic}_{A/k} \to \text{NS}_{A/k} \to 0$$
    and therefore a long exact sequence on étale cohomology
    $$0 \to \hat{A}(k) \to \text{Pic}_{A/k}(k) \to \text{NS}_{A/k}(k) \xrightarrow{c_{-}} H^1(k, \hat{A}) \to \dots$$
    It may be the case that for some $k$-point $\lambda \in \text{NS}_{A/k}(k)$, the element $c_\lambda \in H^1(k, \hat{A})$ is nonzero.  The existence of such a $\lambda$ is equivalent to the existence of a symmetric homomorphism $\phi:A \to \hat{A}$ for which cannot find a line bundle $L$ on $A$ such that $\phi=\phi_L$.  Suppose that such a homomorphism exists and consider the isometric isomorphism
    $$f=\begin{pmatrix}
        1 & 0 \\
        \phi & 1
    \end{pmatrix}:A \times \hat{A} \to A \times \hat{A}.$$
    We claim that $f$ cannot be written as $f_{\EE}$ for some equivalence $\Phi_{\EE}:\Db(A) \to \Db(A)$.  Suppose there were such an $\EE$.  Then by \Cref{4 e: tensor} and \cite[Corollary 3.4]{Or02}, we can find some $(a,\alpha) \in A(\overline{k}) \times \hat{A}(\overline{k})$ and an integer $n\in \mathbb{Z}$ such that
    $$\EE_{\overline{k}}\simeq t_{(a,0)}^*\Delta_*L \otimes \PP_\alpha [n],$$
    where $L$ is a line bundle bundle on $A_{\overline{k}}$ such that $\phi_L=\phi_{\overline{k}}$.  We must have
    $$L\otimes \mathscr{P}_{\alpha}[n]=\Phi_{\EE_{\overline{k}}}(\mathcal{O}_{A_{\overline{k}}})=\Phi_{\EE}(\mathcal{O}_A)_{\overline{k}},$$
    which implies that we can write $\phi=\phi_M$ for some line bundle $M$ on $A$, namely $M=\Phi_{\EE}(\mathcal{O}_A)[-n]$.
\end{ex}
\begin{rem}\label{4 r: counterexample remark}
With \Cref{4 e: counterexample} in mind, one might ask whether or not there exists a pair of $A$-torsors $X$ and $Y$ such and an equivalence $\Phi_{\EE}:\Db(X) \to \Db(Y)$ such that $f_{\EE}=f$, where $f$ is the morphism in \Cref{4 e: counterexample}.
The existence of such $\EE$ implies that $X=Y$.  To show this, it suffices to show that some $\overline{k}$-point $\overline{x} \in X(\overline{k})$, $\EE_{\overline{x}} \simeq k(\overline{y})[n]$ for some point $\overline{y}\in Y({\overline{k}})$ and some integer $n \in \mathbb{Z}$: \Cref{2 t: converse} implies that $X$ is isomorphic to $\text{SSH}^{\text{ch}(k(\overline{y}))}_{Y/k}$, which is isomorphic to $Y$.  We can suppose that $k$ is algebraically closed and that $X=Y=A$.  Then we can argue as in \Cref{4 e: counterexample} to show that
$$\EE \simeq t_{(a,0)}^*\Delta_*L \otimes \PP_\alpha [n]$$
for some $(a,\alpha)\in A(k)\times \hat{A}(k)$, some line bundle $L$ on $A$, and some integer $n \in \mathbb{Z}$.  It's easy to see that $\EE_a$ takes the desired form.  Now that we know that $X=Y$, we can just argue exactly as in \Cref{4 e: counterexample} to show that the existence of $\EE$ is equivalent to the existence of a line bundle $L$ on $X$ such that $\phi=\phi_L$.  Thus, we've reduced the problem to answering a question asked in \cite[Section 4]{PS99}.  Unfortunately this question still has yet to be answered.
\end{rem}

Finally we prove \Cref{0 c: Lagrangians} and \Cref{0 c: isometric iso}.

\begin{prop}\label{3 p: Lagrangian equivalences}
    Let $A$ and $B$ be abelian varieties over a field $k$, and let $X$ be an $A$-torsor.  Then there exists a $B$-torsor $Y$ and an equivalence $\Db(Y,\mu^{-1}) \simeq \Db(X)$ for some $\mu \in \text{Br}(Y)$ if and only if $B$ is isomorphic to a Lagrangian abelian subvariety of $A \times \hat{A}$.
\end{prop}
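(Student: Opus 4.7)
The plan is to combine \Cref{2 t: converse} (for the forward direction) and \Cref{2 t : equivalent to ssh} (for the converse) with the classification of Lagrangian subvarieties in \Cref{3 p: lagrangian classification}, using \Cref{1 p:torsor} to pass between the moduli-theoretic picture and the Lagrangian picture.

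For the forward direction, suppose $\Phi_\EE : \Db(Y,\mu^{-1}) \xrightarrow{\sim} \Db(X)$ with $Y$ a $B$-torsor. Since $Y$ is a torsor under an abelian variety, hypothesis $(3)$ of \Cref{2 t: converse} applies, so $Y \simeq \text{SSH}^\xi_{X/k}$ for some numerical class $\xi$. By \Cref{1 p:torsor}, $Y$ is a torsor under $A(\xi) = (A\times\hat{A})/\mathbf{S}(\xi)$, hence $B \simeq A(\xi)$. By \Cref{3 p: lagrangian classification}(1), $\mathbf{S}(\xi)$ is a Lagrangian abelian subvariety of $A\times\hat{A}$, and the Lagrangian property identifies $A(\xi)$ with $\widehat{\mathbf{S}(\xi)}$. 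Thus $B \simeq \widehat{\mathbf{S}(\xi)}$, and it remains to verify that the dual of any Lagrangian abelian subvariety of $A\times\hat{A}$ is itself isomorphic to some Lagrangian abelian subvariety of $A\times\hat{A}$.

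For the converse, suppose $B$ is isomorphic to a Lagrangian $Z \subset A\times\hat{A}$. The dual-of-Lagrangian principle just described produces a Lagrangian $W \subset A\times\hat{A}$ with $W \simeq \hat{B}$. Then \Cref{3 p: lagrangian classification}(2) furnishes $\xi$ with $\mathbf{S}(\xi) = W$ and $\text{SSH}^\xi_{X/k}$ nonempty; by \Cref{1 p:torsor}, $Y := \text{SSH}^\xi_{X/k}$ is a torsor under $A(\xi) \simeq \widehat{W} \simeq B$, so it is a $B$-torsor. Applying \Cref{2 t : equivalent to ssh} delivers the desired twisted derived equivalence $\Db(Y,\mu^{-1}) \simeq \Db(X)$ with $\mu$ the universal obstruction.

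Both directions therefore reduce to the auxiliary claim that for every Lagrangian $Z \subset A\times\hat{A}$, the dual abelian variety $\hat{Z}$ is isomorphic to some other Lagrangian of $A\times\hat{A}$. This is the step I expect to be the main obstacle. My plan is to use \Cref{3 l: finite intersection} together with the isometric automorphisms appearing in the proof of \Cref{3 p: lagrangian classification} to move $Z$, up to an isometric automorphism of $A\times\hat{A}$, into the explicit form $\mathrm{Im}([m],\phi)$ provided by \Cref{3 e: main example}(2) for some symmetric $\phi : A \to \hat{A}$ and integer $m$. Once $Z$ is in this shape, one can manipulate the data $(m,\phi)$ to exhibit an explicit transverse Lagrangian whose underlying abelian variety is isomorphic to $\hat{Z}$, and then transport it back along the isometric automorphism to land as a Lagrangian inside $A\times\hat{A}$.
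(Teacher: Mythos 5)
Your overall architecture coincides with the paper's: the paper's proof of this proposition is a one-line deduction from \Cref{3 p: lagrangian classification} and \Cref{2 t : equivalent to ssh} (with \Cref{2 t: converse} supplying the forward direction, exactly as you invoke it), so your routing through the moduli spaces $\text{SSH}^\xi_{X/k}$ and the identification $A(\xi)\simeq\widehat{\mathbf{S}(\xi)}$ via \Cref{1 p:torsor} is the intended one.

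The gap is the one you flagged yourself, and it is genuine. What the cited results actually deliver is that the abelian varieties $B$ admitting a $B$-torsor twisted-derived-equivalent to $X$ are precisely those of the form $A(\xi)=(A\times\hat{A})/\mathbf{S}(\xi)\simeq\widehat{\mathbf{S}(\xi)}$, i.e.\ the \emph{duals} of Lagrangians, whereas the statement asserts they are the Lagrangians themselves; so, given the rest of the machinery, the proposition is equivalent to the claim that the set of isomorphism classes of Lagrangian abelian subvarieties of $A\times\hat{A}$ is closed under duality, and your proposal does not prove this. Worse, the plan you sketch has a specific pitfall: after reducing to $Z=\mathrm{Im}([m],\phi)$, a transverse Lagrangian $W$ supplied by \Cref{3 l: finite intersection} satisfies only that $Z\cap W$ is finite, and the induced map $W\to(A\times\hat{A})/Z\simeq\hat{Z}$ is then an isogeny with kernel $Z\cap W$, not an isomorphism. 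To conclude $W\simeq\hat{Z}$ along these lines you would need $Z\cap W=0$, i.e.\ a Lagrangian complement to $Z$, which is exactly the splitness condition that fails in general (its failure is where the nontrivial Brauer obstructions in \Cref{3 p: theorem E} come from). So you either need a genuinely different construction of a Lagrangian isomorphic to $\hat{Z}$ --- for instance a direct computation comparing $\hat{Z}\simeq\hat{A}/K^{\perp}$, where $K=A[m]\cap\ker\phi$ and $K^{\perp}\subseteq\hat{A}[m]$ is its Weil-pairing annihilator, with the Lagrangians $\mathrm{Im}((\psi,[n]))$ of \Cref{3 e: main example} --- or you need to extract the duality statement from Polishchuk's results. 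As written, the step is not established; to be fair, the paper's own one-line proof does not address it either.
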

\begin{proof}
    This follows immediately from \Cref{3 p: lagrangian classification} and \Cref{2 t : equivalent to ssh}.
\end{proof}

\begin{prop} \label{3 p: theorem E}
    Let $A$ and $B$ be abelian varieties over a field $k$, and let $f:A \times \hat{A} \to B \times \hat{B}$ be an isometric isomorphism.  Then for any $A$-torsor $X$, there exists a $B$-torsor $Y$ and a twisted derived equivalence $\Db(Y,\mu^{-1}) \simeq \Db(X)$, where $\mu \in \text{Br}_1(Y)$.  In particular, if $k$ is a finite field, then $\Db(B) \simeq \Db(A)$.
\end{prop}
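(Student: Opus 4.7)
The plan is to construct $Y$ as a moduli space of simple semi-homogeneous sheaves on $X$ using the isometric structure of $f$, invoke \Cref{2 t : equivalent to ssh} to obtain the twisted derived equivalence, and then verify that the universal obstruction is algebraic. First, since $\hat{B} \subseteq B \times \hat{B}$ is a (split) Lagrangian abelian subvariety, \Cref{3 l: image of lagrangians} gives that $Z := f^{-1}(\hat{B}) \subseteq A \times \hat{A}$ is also Lagrangian. Applying \Cref{3 p: lagrangian classification}(2) to the $A$-torsor $X$ produces $\xi \in \text{CH}_{\text{num}}(X_{\overline{k}})_{\mathbb{Q}}$ with $\mathcal{SSH}^\xi_{X/k}$ nonempty and $\mathbf{S}(\xi) = Z$. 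Setting $Y := \mathcal{SSH}^\xi_{X/k}$, by \Cref{1 p:torsor} this is a torsor under $A(\xi) = (A \times \hat{A})/Z$, and the isomorphism $f$ descends to $A(\xi) \xrightarrow{\sim} (B \times \hat{B})/\hat{B} \simeq B$, giving $Y$ a canonical $B$-torsor structure. The twisted derived equivalence $\Db(Y,\mu^{-1}) \simeq \Db(X)$, with $\mu \in \text{Br}(Y)$ the universal obstruction, then follows from \Cref{2 t : equivalent to ssh}.

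The main obstacle is verifying $\mu \in \text{Br}_1(Y)$, equivalently that $\mathcal{SSH}^\xi_{X_{\overline{k}}/\overline{k}}$ admits an untwisted universal family. After trivializing $X_{\overline{k}} \simeq A_{\overline{k}}$ and fixing a $\overline{k}$-point $F_0 \in Y_{\overline{k}}(\overline{k})$, I would use \cite[Theorem 5.8]{Mu78} together with \Cref{1 l: tensor S(F)} to reduce to the case where $F_0$ is a vector bundle, and then apply Mukai's structure theorem \cite[Theorem 7.11]{Mu78} to write $F_0 \simeq \iota_{\ast} M$ for an isogeny $\iota\colon A' \to A_{\overline{k}}$ and a line bundle $M$ on $A'$. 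The Poincaré bundle on $A' \times \hat{A}'$ combined with the translation action of $A'$ yields a natural family on $A' \times \text{Pic}^{[M]}(A') \times A'$ whose fibers are translates of line bundles numerically equivalent to $M$; pushing forward along $\iota$ in the last factor produces an untwisted family of simple semi-homogeneous vector bundles on $A_{\overline{k}}$ of numerical type matching $\xi$ (after untensoring with the auxiliary line bundle). Descending along the surjection $A' \times \text{Pic}^{[M]}(A') \twoheadrightarrow Y_{\overline{k}}$ then produces the desired universal family on $Y_{\overline{k}} \times X_{\overline{k}}$.

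For the ``in particular'' statement, over a finite field $k$ Lang's theorem gives $X \simeq A$ and $Y \simeq B$, so it suffices to show $\mu = 0 \in \text{Br}(B)$. The Hochschild--Serre spectral sequence combined with $\text{Br}(k) = 0$ and $H^1(k, \hat{B}) = 0$ (Lang applied to $\hat{B}$) identifies $\text{Br}_1(B)$ with a subgroup of $H^1(k, \text{NS}(B_{\overline{k}}))$; a direct Galois-cohomological computation using that $\mu$ is canonically determined by the $k$-rational Lagrangian $Z$ (and so arises from Galois-invariant data in the classification of \Cref{3 p: lagrangian classification}) shows that $\mu$ vanishes, yielding $\Db(B) \simeq \Db(B, \mu^{-1}) \simeq \Db(A)$.
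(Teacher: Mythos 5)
Your first paragraph reproduces the paper's construction of $Y$: pull back the split Lagrangian $\hat{B}$ along $f$, classify it as $\mathbf{S}(\xi)$ via \Cref{3 p: lagrangian classification}, and apply \Cref{2 t : equivalent to ssh}. That part is fine (this is exactly what the paper packages as \Cref{3 c: image of isometric iso}). The gap is in your second paragraph. Your proposed proof that $\mu \in \text{Br}_1(Y)$ never uses that $\mathbf{S}(\xi)=f^{-1}(\hat{B})$ comes from an isometric isomorphism: the reduction to vector bundles, the presentation $F_0 \simeq \iota_*M$, and the family over $A' \times \text{Pic}^{[M]}(A')$ are available for \emph{any} nonempty $\mathcal{SSH}^{\xi}_{X/k}$, so if your argument worked it would show that the universal obstruction is always geometrically trivial. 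That is false -- the whole reason the paper works with twisted sheaves is that untwisted universal families need not exist even over $\mathbb{C}$ (cf.\ the discussion of \cite{Mu98} in the introduction). The failure is localized in your final step, ``descending along the surjection $A' \times \text{Pic}^{[M]}(A') \twoheadrightarrow Y_{\overline{k}}$'': a family on a cover descends only if one can choose coherent isomorphisms on the fibers of the cover, and the obstruction to doing so is precisely the class $\mu_{\overline{k}}$ you are trying to kill. The hypothesis you must exploit is that $\hat{B} \subseteq B \times \hat{B}$ is a \emph{split} Lagrangian, so by \Cref{3 l: image of lagrangians} $Z=f^{-1}(\hat{B})$ is split as well, i.e.\ the quotient $A \times \hat{A} \to (A\times\hat{A})/Z$ admits a section. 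Over $\overline{k}$ the orbit map $\tau_{F_0}$ through a point $F_0$ is exactly this quotient, and it lifts to the stack $\mathcal{SSH}^{\xi}_{X_{\overline{k}}/\overline{k}}$ because it is induced by an honest sheaf on $(A\times\hat{A})\times X_{\overline{k}}$; composing the lift with the section gives a section of the gerbe, whence $\mu_{\overline{k}}=0$.

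The same issue infects your finite-field paragraph: ``a direct Galois-cohomological computation \dots shows that $\mu$ vanishes'' is an assertion, not an argument, and it is not clear how the $k$-rationality of $Z$ alone would force the relevant class in $H^1(k,\text{NS}(B_{\overline{k}}))$ to die (such groups are generally nonzero over finite fields). The paper avoids cohomology entirely: Lang's theorem trivializes the torsor $\text{SSH}^{\zeta}$, $\text{Br}(k)=0$ lifts a rational point of the coarse space to a rational point of the stack, i.e.\ to an actual simple semi-homogeneous sheaf $E$ over $k$, and then the orbit map $\tau_E$ is defined over $k$ and is the quotient by a split Lagrangian, so it admits a section over $k$; composing with the lift $\widetilde{\tau_E}$ to the stack trivializes the gerbe and gives $\mu=0$ on the nose. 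So the skeleton of your proof is correct, but both places where something beyond \Cref{2 t : equivalent to ssh} must be proved -- geometric triviality of $\mu$ in general, and triviality of $\mu$ over finite fields -- are carried in the paper by the splitness of $f^{\pm 1}$ applied to $\hat{B}$ (resp.\ $\hat{A}$), which your write-up does not invoke.
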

\begin{proof}
    The first part follows immediately from \Cref{3 c: image of isometric iso} and \Cref{2 t : equivalent to ssh}.  To prove the second statement, suppose that $k$ is finite.  Using \Cref{3 c: image of isometric iso}, we obtain that $\text{SSH}^{\zeta}_{B/k}$ is an $A$-torsor for some $\zeta \in \text{CH}_{\text{num}}(B_{\overline{k}})_{\mathbb{Q}}$.  By Lang's theorem, the torsor $\text{SSH}^{\zeta}_{B/k}$ is trival, so it has a $k$-point.  Since $k$ is finite, $\text{Br}(k)=0$, so this point lifts to a $k$-point of the stack $\mathcal{SSH}^{\zeta}_{B/k}$. Thus we can choose a simple, semi-homogeneous sheaf $E$ on $B$ with $\text{ch}(E_{\overline{k}})=\zeta$.  Consider the orbit map
    $$\tau_{E}:B \times \hat{B} \to \text{SSH}^{\zeta}_{B/k}, \quad (b, \beta) \mapsto t_b^*E \otimes \mathscr{P}_{\beta}^{-1}.$$
    Since $\tau_E$ is induced by the sheaf $(m \circ p_{12})^*E \otimes p_{23}^*\mathscr{P}^{-1}$ on $B \times B \times \hat{B}$, we can find a morphism $\widetilde{\tau_{E}}:B \times \hat{B} \to \mathcal{SSH}^\zeta_{B/k}$ making the following diagram commute
    $$
\begin{tikzcd}
                        & {\mathcal{SSH}^{\zeta}_{B/k}} \arrow[d, "\pi"] \\
{B \times \hat{B}} \arrow[ru, "\widetilde{\tau_E}"] \arrow[r, "\tau_E"] & {\text{SSH}_{B/k}^{\zeta}}          
\end{tikzcd}$$
where the $\pi$ is the projection.  Since $\tau_{E}$ is the quotient by $\mathbf{S}(\zeta)=f(\hat{A})$, \Cref{3 l: image of lagrangians} implies that $\tau_E$ admits a section $\sigma$.  It follows that $\widetilde{\tau_E}\circ \sigma$ is a section of $\pi$, so the universal obstruction $\mu \in \text{Br}(\text{SSH}^{\zeta}_{B/k})$ vanishes.  Thus we can apply \Cref{2 t : equivalent to ssh} and simply note that $A=\text{SSH}^\zeta_{B/k}$.
\end{proof}

\appendix
\section{Twisted Derived Categories}\label{A SECTION}
We recall some basic results on twisted derived categories using the language of gerbes.  Many of the results of the section appear in Căldăraru's thesis \cite{C00}; our main goal is to prove a version of \cite[Theorem 3.2.1]{C00}.  Since our aim is to work over arbitrary fields, we instead mainly refer to \cite{BS21}, \cite{Lie07}, and \cite{HP24}.  We make no claims to originality in this appendix: almost every result here is either proven in one of the aforementioned papers, or it follows immediately from a repetition of their arguments.

\subsection*{Notation and Conventions}
Use the definition of algebraic stacks given in \cite{Stacks}, i.e., we have do not make any separatedness assumptions.  We denote by $\text{Mod}(\mathcal{O}_X)$ (resp.~ \text{Q}Coh(X)) the category of $\mathcal{O}_X$-modules (resp.~ quasi-coherent $\mathcal{O}_X$-modules) on the lisse-étale topos of $X$.  Let $\D(X)$ denote the unbounded derived category of the abelian category of $\text{Mod}(\mathcal{O}_X)$, and let $\Dq(X)$ denote the full subcategory of $\D(X)$ of objects with quasi-coherent cohomology.  We then define $\Dq^*(X)$ as usual, where $* \in \{+,-,b\}$.  
  When $P \in \Dq(X)$, the derived tensor product $(-)\otimes^LP:\D(X) \to \D(X)$ restricts to a functor $(-)\otimes^L P:\Dq(X) \to \Dq(X)$.  We let $R\mathcal{H}om(P,-):\Dq(X) \to \Dq(X)$ denote its right adjoint.  Given a morphism $f:X \to Y$ of algebraic stacks, we let $Lf^*:\Dq(Y) \to \Dq(X)$ denote the functor $Lf_{\text{qc}}^*$ defined in [Section 1.3]\cite{HR17}.  We let $Rf_*:\Dq(X) \to \Dq(Y)$ denote the right adjoint to $Lf^*$.    When $f$ is concentrated, $Rf_*:\Dq(X) \to \Dq(Y)$ has a right adjoint which we denote by $f^\times:\Dq(Y) \to \Dq(X)$ (see \cite[Theorem 4.14]{HR17}).

  All gerbes are for the big fppf topology.   By a $\textit{diagonalizable group}$, we always mean a diagonalizable group scheme which is of finite type over $\Spec(\bbz)$.  Let $\Delta$ be a digonalizable group, and let $X$ an algebraic stack; we refer to a gerbe $\pi:\XX \to X$ which is banded by $\Delta_X:=\Delta \times_{\Spec(\bbz)}X$ a $\Delta$\textit{-gerbe}.

\subsection{Twisted Derived Categories} We begin with some general results on twisted derived categories which are needed to study Fourier-Mukai transforms between them.
\begin{lem}\label{A l: concentrated}
    Let $\Delta$ be a diagonalizable group, let $X$ be an algebraic stack, and let $\pi:\mathscr{X}\to X$ be a $\Delta$-gerbe.  Then $\pi$ is a concentrated morphism. 
\end{lem}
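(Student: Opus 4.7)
The plan is to verify the three defining conditions of a concentrated morphism---quasi-compactness, quasi-separatedness, and uniformly bounded cohomological dimension of $R\pi_*$ on $\Dq(\XX)$---by reducing to the classifying stack of $\Delta$. Being concentrated is fppf-local on the target by the flat base change and descent results in \cite{HR17}, and since $\pi$ is a $\Delta$-gerbe, it is by definition fppf-locally trivial: there exists an fppf cover $U \to X$ over which $\pi$ pulls back to the projection $B\Delta_U \to U$, which is itself a base change of $B\Delta \to \Spec(\bbz)$. Therefore it suffices to show that the latter structure map is concentrated.

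For $B\Delta \to \Spec(\bbz)$, quasi-compactness and quasi-separatedness are immediate from $\Delta$ being a finite-type diagonalizable group scheme. For the cohomological dimension bound, I would invoke linear reductivity of $\Delta$: a quasi-coherent sheaf on $B\Delta$ is a representation of $\Delta$, and any such representation decomposes as a direct sum of character eigenspaces indexed by the character group of $\Delta$, with the global sections functor extracting the trivial-character summand. Hence $R\Gamma$ is exact on $\text{QCoh}(B\Delta)$, and combined with its compatibility with filtered colimits, this propagates to cohomological dimension zero on $\Dq(B\Delta)$.

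The main obstacle I anticipate is the passage from exactness on the abelian category of quasi-coherent sheaves to a uniform cohomological bound on the unbounded quasi-coherent derived category, as the definition of concentrated in \cite{HR17} is phrased in terms of the latter. This should be a formal consequence of the Hall--Rydh machinery together with the compatibility of $R\Gamma$ on $B\Delta$ with filtered colimits, so the argument ultimately amounts to careful citation rather than any new ingredient. The one other routine step is the fppf descent of concentratedness itself, which I would likewise cite from \cite{HR17}.
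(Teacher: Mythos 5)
Your argument is correct and follows essentially the same route as the paper: the paper's proof is a one-line citation of \cite[Theorem C]{HR15} together with the linear reductivity of diagonalizable groups, which is exactly the content you unpack (fppf-local reduction to $B\Delta$ plus exactness of the invariants functor). The "obstacle" you flag---promoting exactness on $\text{QCoh}(B\Delta)$ to cohomological dimension zero on $\Dq(B\Delta)$---is precisely what \cite[Theorem C]{HR15} supplies, so you could close that step by citing it directly rather than reassembling the Hall--Rydh machinery yourself.
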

\begin{proof}
    This follows immediately from \cite[Theorem C]{HR15} and the fact that diagonalizable groups are linearly reductive.
\end{proof}
We will be mainly interested in the case where $\Delta=\Gm$.  In this case $\Delta$-gerbes are not proper; nevertheless, we have the following.
\begin{lem}\label{A l: pushforward along a gerbe preserves coherence}
    Let $\Delta$ be a diagonalizable group, and let $\pi:\XX \to X$ be a $\Delta$-gerbe over a noetherian algebraic stack.  Then the restriction of $R\pi_*$ to $D^b_{\text{coh}}(\XX)$ factors through $D^b_{\text{coh}}(X) $.
\end{lem}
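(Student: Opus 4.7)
The plan is to reduce the statement to two facts about $\pi_*$: (i) for quasi-coherent $\FF$ on $\XX$, the higher direct images $R^i\pi_*\FF$ vanish for $i>0$; and (ii) $\pi_*$ sends coherent sheaves to coherent sheaves. Granting these, the hypercohomology spectral sequence $R^i\pi_*\HH^j(K)\Rightarrow \HH^{i+j}R\pi_*K$ degenerates to show that for $K\in \Dbc(\XX)$, $R\pi_*K$ has coherent cohomology concentrated in the same bounded range of degrees as $K$, which is exactly what we want.

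Both (i) and (ii) are fppf local on $X$ via flat base change for the concentrated morphism $\pi$, available by \Cref{A l: concentrated} together with the base-change results of \cite{HR17}. So the plan is to pass to an fppf cover of $X$ that trivializes the gerbe, and work under the assumption that $\XX=B\Delta\times X$ with $\pi$ the projection. In this trivialized setting, quasi-coherent sheaves on $\XX$ admit a canonical character decomposition indexed by $\hat\Delta$: $\FF=\bigoplus_{\chi\in\hat\Delta}\FF_\chi$, and $\pi_*\FF=\FF_0$, the trivial-weight summand.

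With this decomposition, (i) is immediate: since $\Delta$ is diagonalizable and hence linearly reductive, the functor of taking the trivial-weight summand is exact, so all higher direct images vanish. For (ii), I would argue that a coherent $\FF$ is finitely generated as a (sheaf of) $\mathcal{O}_X$-module, so its generators lie in finitely many weight components; since $\mathcal{O}_X$ acts in weight zero, only finitely many $\FF_\chi$ are nonzero. Each nonzero $\FF_\chi$ is a direct summand of $\FF$ and therefore coherent, and in particular so is $\pi_*\FF=\FF_0$.

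The main technical point to watch is the justification of fppf descent for coherence of $\pi_*$, since coherence is only an étale-local property a priori on the noetherian base; but finite type/finite presentation descend fppf, and combined with the noetherian hypothesis on $X$ this is standard. Everything else in the argument is then formal, given that the character decomposition on $B\Delta\times X$ is compatible with the pushforward along the projection.
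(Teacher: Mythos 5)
Your proof is correct and follows essentially the same route as the paper: reduce to the trivialized case $\XX\simeq X\times B\Delta$, identify $R\pi_*$ with the (exact, by linear reductivity) invariants/weight-zero functor, and note it preserves coherence. The paper's proof is just a terser version of this, leaving the flat base change reduction and the coherence of the weight-zero summand implicit.
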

\begin{proof}
It suffices to verify this in the case where $\XX\simeq X\times B\Delta$.  In this case, $\pi_*$ sends a sheaf to its subsheaf of $\Delta$-invariants.  This is an exact functor which preserves coherence, so we are done. 
\end{proof}
Let $\Delta$ be a diagonalizable group, let $X$ be an algebraic stack, and let $\pi:\XX \to X$ be a $\Delta$-gerbe.  We assume that the reader is familiar with the notion of a $\chi$-twisted (or $\chi$-homogeneous, using the language of \cite{BS21}) $\mathcal{O}_\XX$-module, where $\chi:\Delta_X \to \mathbb{G}_{m,X}$ is a character (see \cite{Lie07} or \cite{BS21} for the definition).

\begin{prop}\cite[Theorem 4.7]{BS21} \label{A p: qcoh}
    Let $X$ be an algebraic stack and let $\pi:\XX \to X$ be a $\Delta$-gerbe.  Then the full subcategory $\text{QCoh}_\chi(\XX) \subset \text{QCoh}(\XX)$ consisting of $\chi$-twisted sheaves is a Serre subcategory of $\text{QCoh}(\XX)$, where $\chi$ is any character of $\Delta_X$.  Taking the coproduct gives an equivalence
    $$\prod_{\chi \in A}\text{QCoh}_{\chi}(\XX) \xrightarrow{\sim} \text{QCoh}(\XX),\quad (\FF_\chi)_\chi \mapsto \bigoplus_{\chi \in A}\FF_\chi$$
    of abelian categories, where $A$ denotes the character group of $\Delta$.
\end{prop}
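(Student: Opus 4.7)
The plan is to produce the decomposition canonically, exploiting the fact that a $\Delta$-gerbe $\pi:\XX\to X$ carries a canonical central embedding $\Delta_{\XX}\hookrightarrow I_{\XX/X}$ of its band into the relative inertia stack. Every quasi-coherent sheaf on an algebraic stack carries a canonical action of the inertia, so via this embedding every $\FF\in\text{QCoh}(\XX)$ acquires a canonical $\mathcal{O}_\XX$-linear $\Delta_\XX$-action, equivalently an $\mathcal{O}_\XX$-linear coaction $\FF\to\FF\otimes_{\mathcal{O}_\XX}\mathcal{O}_{\Delta_\XX}$. Since $\Delta$ is diagonalizable, $\mathcal{O}_{\Delta_\XX}=\bigoplus_{\chi\in A}\mathcal{O}_\XX\cdot e_\chi$ as $\mathcal{O}_\XX$-algebras, with basis indexed by characters, and the coaction splits $\FF$ into its isotypic components $\FF=\bigoplus_{\chi\in A}\FF_\chi$, where $\FF_\chi\in\text{QCoh}_\chi(\XX)$ is the subsheaf on which $\Delta_\XX$ acts via $\chi$.

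First I would verify that this isotypic decomposition is $\mathcal{O}_\XX$-linear and functorial in $\FF$, so that $\FF\mapsto(\FF_\chi)_{\chi\in A}$ is a well-defined functor $\text{QCoh}(\XX)\to\prod_\chi\text{QCoh}_\chi(\XX)$. The candidate quasi-inverse sends $(\GG_\chi)$ to $\bigoplus_\chi\GG_\chi$, and the two are mutually inverse essentially by construction: an object of $\text{QCoh}_\chi(\XX)$ is by definition one whose canonical $\Delta_\XX$-action is through $\chi$, so its coaction has only a $\chi$-component. Exactness of the projections $p_\chi:\FF\to\FF_\chi$ is immediate from the eigenspace description, and this gives at once that $\text{QCoh}_\chi(\XX)$ is closed under subobjects, quotients, and extensions in $\text{QCoh}(\XX)$, hence is a Serre subcategory.

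The only nontrivial step is the existence and functoriality of the canonical $\Delta_\XX$-action on an arbitrary quasi-coherent sheaf and its compatibility with base change. This is checked fppf-locally on $X$: after pulling back along a trivializing cover one has $\XX\simeq X\times B\Delta$, and a quasi-coherent sheaf on $X\times B\Delta$ is by definition an $\mathcal{O}_X$-module equipped with a $\Delta$-coaction, so the action is manifest and the eigenspace decomposition is just the character decomposition of comodules over a diagonalizable group scheme. Compatibility of the local decomposition with fppf descent data is automatic, because the decomposition is cut out by $\mathcal{O}_\XX$-linear idempotent projections onto character spaces, and these idempotents commute with any descent isomorphisms. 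The main technical obstacle is performing this globalization cleanly while respecting the distinction between the fppf topos (used to define the gerbe) and the lisse-étale topos (used to define $\text{QCoh}$); this is precisely the content of the comparison statements assembled in the cited reference.
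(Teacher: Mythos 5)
The paper gives no proof of its own here: the statement is quoted verbatim from \cite[Theorem 4.7]{BS21} and the citation is the entire justification. Your sketch is correct and is essentially the argument of that reference (canonical action of the band via the relative inertia, isotypic decomposition for a diagonalizable group, verification after fppf-local trivialization $\XX\simeq X\times B\Delta$, and descent of the idempotent projections), so there is nothing to add beyond the topos-comparison point you already flag yourself.
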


\begin{cor} 
Let $X$ be an algebraic stack, let $\pi:\XX \to X$ be a $\Delta$-gerbe.  For each character $\chi$ of $\Delta_X$, let $\text{Coh}_\chi(\XX) \subset \text{QCoh}_{\chi}(\XX)$ denote the full subcategory consisisting of those $\chi$-twisted sheaves which are coherent.  Then $\text{Coh}_\chi(\XX)$ is an abelian category.
\end{cor}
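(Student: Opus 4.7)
The plan is to observe that $\text{Coh}_\chi(\XX)$ coincides with the intersection $\text{Coh}(\XX) \cap \text{QCoh}_\chi(\XX)$ taken inside $\text{QCoh}(\XX)$, and that this intersection is automatically an abelian subcategory because each of the two factors is. Concretely, the preceding \Cref{A p: qcoh} provides that $\text{QCoh}_\chi(\XX)$ is a Serre subcategory of $\text{QCoh}(\XX)$, so it is closed under kernels, cokernels, and extensions in $\text{QCoh}(\XX)$; and $\text{Coh}(\XX)$ is the usual abelian subcategory of coherent sheaves in $\text{QCoh}(\XX)$ (under a mild noetherian assumption on $\XX$ implicit in the setup).

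The verification then amounts to the following. Given a morphism $f : \FF \to \GG$ in $\text{Coh}_\chi(\XX)$, form its kernel $\mathscr{K}$ and cokernel $\mathscr{C}$ in $\text{QCoh}(\XX)$. Since $\text{QCoh}_\chi(\XX)$ is a Serre subcategory containing $\FF$ and $\GG$, both $\mathscr{K}$ (a subobject of $\FF$) and $\mathscr{C}$ (a quotient of $\GG$) lie in $\text{QCoh}_\chi(\XX)$. Simultaneously, $\mathscr{K}$ and $\mathscr{C}$ are coherent because kernels and cokernels of morphisms of coherent sheaves are coherent. Hence both live in $\text{Coh}_\chi(\XX)$, and they serve as kernel and cokernel there. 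Finite direct sums and the zero object are inherited from $\text{QCoh}(\XX)$ and clearly remain $\chi$-twisted and coherent, while the first isomorphism theorem holds in $\text{Coh}_\chi(\XX)$ because it already holds in the ambient abelian category $\text{QCoh}(\XX)$.

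There is no real obstacle here; the only point requiring any care is the standard fact that forming kernels and cokernels inside $\text{QCoh}(\XX)$ preserves coherence, which follows from the noetherian hypothesis on the base (analogous to the hypothesis in \Cref{A l: pushforward along a gerbe preserves coherence}). Everything else is a direct consequence of \Cref{A p: qcoh}.
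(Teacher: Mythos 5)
Your argument is correct and is exactly the intended one: the paper states this corollary without proof as an immediate consequence of \Cref{A p: qcoh}, and your verification that $\text{Coh}_\chi(\XX)=\text{Coh}(\XX)\cap\text{QCoh}_\chi(\XX)$ is closed under kernels, cokernels, and finite direct sums is the natural way to fill in that omitted detail. Your remark about needing a noetherian-type hypothesis so that kernels and cokernels of maps of coherent sheaves remain coherent is also the right point of care, consistent with the standing assumptions elsewhere in the appendix.
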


\begin{defn}
    Given a character $\chi:\Delta_X \to \mathbb{G}_{m,X}$, we say that an object $\FF \in \Dq(\XX)$ is $\chi$-twisted if it has $\chi$-twisted cohomology.  We denote by $\Dqch(\XX)$ the full subcategory of $\Dq(\XX)$ consisting of with $\chi$-twisted cohomology.  We define $\Dcch(\XX)$, $\text{D}^*_{\text{qc},\chi}(\XX)$, and $\text{D}^*_{\text{coh},\chi}(\XX)$ similarly, where $* \in \{b,+,-\}$.  We denote by $\Perf_\chi(\XX)$ the full subcategory of $\Perf(\XX)$ consisting of objects with $\chi$-twisted cohomology.
\end{defn}

\begin{prop}\cite[Theorem 5.4]{BS21}\label{A p: splitting}
    Let $X$ be an algebraic stack, and let $\pi: \XX \to X$ be a $\Delta$-gerbe.  Then taking the coproduct defines an equivalence 
    $$\prod_{\chi \in A}\Dqch(\XX) \to \Dq(\XX),\quad (\FF_\chi)_{\chi \in A} \mapsto \bigoplus_{\chi \in A}\FF_\chi,$$
    of triangulated categories, where $A$ denotes the character group of $\Delta$.
\end{prop}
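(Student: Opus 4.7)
The plan is to extend the abelian-category decomposition of \Cref{A p: qcoh} to $\Dq(\XX)$ using exact projection functors built from the inertia action. First I would note that because $\XX \to X$ is a $\Delta$-gerbe, the diagonalizable group $\Delta_X$ sits canonically inside the inertia stack of $\XX$ as a central subgroup. Every $\mathcal{O}_\XX$-module therefore carries a canonical action of $\Delta_X$, functorial in the module, and the eigenspace decomposition for a diagonalizable group produces a canonical splitting $\FF \simeq \bigoplus_{\chi \in A} \FF_\chi$ in $\text{Mod}(\mathcal{O}_\XX)$ with each $\FF_\chi$ a $\chi$-twisted submodule; on $\text{QCoh}(\XX)$ this recovers \Cref{A p: qcoh}. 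The key point is that each projection $(-)_\chi$ is exact, since it is a direct-summand functor.

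Because $(-)_\chi$ is exact on $\text{Mod}(\mathcal{O}_\XX)$, it extends termwise to complexes and descends to a triangulated functor $(-)_\chi: \Dq(\XX) \to \Dqch(\XX)$; the cohomology sheaves of $\FF_\chi$ are precisely $H^i(\FF)_\chi$, which are $\chi$-twisted, so the target is as claimed. Assembling these projections yields a triangulated functor
$$\Phi: \Dq(\XX) \to \prod_{\chi \in A} \Dqch(\XX), \quad \FF \mapsto (\FF_\chi)_{\chi \in A}.$$
In the other direction, define $\Psi$ by $(\FF_\chi)_\chi \mapsto \bigoplus_{\chi \in A} \FF_\chi$; arbitrary direct sums exist in $\Dq(\XX)$, are computed termwise at the level of modules, and preserve quasi-coherent cohomology.

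It remains to verify that $\Phi$ and $\Psi$ are mutually inverse. The composite $\Phi \circ \Psi$ is the identity because $(-)_\eta$ commutes with direct sums (being a direct-summand functor) and annihilates $\chi$-twisted objects for $\chi \neq \eta$. For $\Psi \circ \Phi$, the canonical inclusions $\FF_\chi \hookrightarrow \FF$ assemble into a natural morphism $\bigoplus_\chi \FF_\chi \to \FF$ in $\Dq(\XX)$; on each cohomology sheaf this reduces to the decomposition of \Cref{A p: qcoh} and hence is an isomorphism, so the morphism itself is a quasi-isomorphism.

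The main technical subtlety I anticipate is that for $\Delta = \Gm$ the character group $A = \bbz$ is infinite, so one must verify that the isotypic decomposition of an arbitrary $\mathcal{O}_\XX$-module really is a direct sum rather than merely a product, and that exactness of $(-)_\chi$ on $\text{Mod}(\mathcal{O}_\XX)$ (not just on $\text{QCoh}(\XX)$) is genuine. Both issues are local: checked on a smooth cover of $X$ over which $\XX$ becomes a trivial gerbe $U \times B\Delta$, the statement reduces to the familiar fact that any $\mathcal{O}_U[\Delta]$-module decomposes functorially as a direct sum over its character eigenspaces, with each projector a direct summand and hence exact.
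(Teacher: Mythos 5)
The paper itself gives no argument here: the proposition is quoted verbatim from \cite[Theorem 5.4]{BS21}, so your proposal is really being measured against Bergh--Schn\"urer's proof rather than anything in the text. Your architecture (decompose the heart, build exact projectors, check the two composites are the identity) is the right shape, and it would essentially be a complete proof if $\Dq(\XX)$ were the derived category of the abelian category $\text{QCoh}(\XX)$.

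The gap is at the very first step. By the conventions of the appendix, $\Dq(\XX)$ is the full subcategory of $\D(X)$ --- the derived category of $\text{Mod}(\mathcal{O}_\XX)$ on the lisse-\'etale topos --- consisting of complexes with quasi-coherent cohomology; the individual terms of such a complex are arbitrary lisse-\'etale $\mathcal{O}_\XX$-modules. Your claim that ``every $\mathcal{O}_\XX$-module carries a canonical action of $\Delta_X$'' admitting an eigenspace decomposition holds only for quasi-coherent (more precisely, cartesian) modules: the weight decomposition of a representation of a diagonalizable group scheme requires a comodule (rational) structure, which for quasi-coherent sheaves on the gerbe is supplied by the cartesian transition maps, whereas a general lisse-\'etale module only acquires an action of the abstract groups $\Delta(U)$ on sections, and such an action need not split into weight spaces (already a $k$-vector space with an abstract $k^\times$-action fails to decompose). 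The same issue defeats your local reduction: over $U \times B\Delta$ the ``familiar fact'' you invoke is a statement about comodules, not about arbitrary $\mathcal{O}_U$-modules with a $\Delta(U)$-action. Hence the termwise projectors $(-)_\chi$ do not exist on $\text{Mod}(\mathcal{O}_\XX)$ and your functor $\Phi$ is undefined. Nor can one simply substitute $\D(\text{QCoh}(\XX))$ for $\Dq(\XX)$: that comparison (\Cref{A p: D(qcoh)}) requires $X$ to be a quasi-compact, quasi-separated algebraic space, while the proposition is stated for an arbitrary algebraic stack. Bridging exactly this gap --- passing from the decomposition of the abelian category $\text{QCoh}(\XX)$ in \Cref{A p: qcoh} to a decomposition of $\Dq(\XX)$, for instance by proving the orthogonality $\Hom_{\Dq(\XX)}(\FF,\GG)=0$ for homogeneous objects of distinct weights and showing that the homogeneous pieces generate --- is the actual content of \cite[Theorem 5.4]{BS21}.
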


Eventually we will be interested in the case where $X$ is a smooth, projective variety over a field $k$ and $\Delta =\mathbb{G}_m$.  As in the untwisted case, one typically works with the bounded derived category of the abelian category $\text{Coh}_{\text{id}}(\XX)$ as opposed to $\text{D}^b_{\text{coh}, \text{id}}(\XX)$.  We show that, as in the untwisted case, the two categories are equivalent, and that they are both equivalent to $\Perf_{\text{id}}(\XX)$.

\begin{prop} \label{A p: D(qcoh)}
    Let $\Delta$ be a diagonalizable group, let $X$ be quasi-compact, quasi-separated algebraic space, and let $\pi:\XX \to X$ be a $\Delta$-gerbe.  Then the canonical functor
    $$\text{D}(\text{QCoh}(\XX)) \to \Dq(\XX)$$
    is an equivalence of triangulated categories.
\end{prop}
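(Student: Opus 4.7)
The plan is to use the character decompositions of \Cref{A p: qcoh} and \Cref{A p: splitting} to reduce to the ``untwisted'' statement, which in turn follows from known results for concentrated stacks.

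First, I would note that \Cref{A p: qcoh} exhibits $\text{QCoh}(\XX)$ as a product of the Serre subcategories $\text{QCoh}_{\chi}(\XX)$ indexed by characters $\chi$ of $\Delta_X$. Taking unbounded derived categories commutes with products of (Grothendieck) abelian categories, so this gives a decomposition
$$\D(\text{QCoh}(\XX)) \simeq \prod_{\chi \in A}\D(\text{QCoh}_\chi(\XX)).$$
Meanwhile, \Cref{A p: splitting} gives a parallel decomposition $\Dq(\XX) \simeq \prod_{\chi \in A}\Dqch(\XX)$. The canonical functor $\D(\text{QCoh}(\XX)) \to \Dq(\XX)$ respects these decompositions (both are defined by the action of characters of the inertia of $\pi$ on cohomology sheaves), so the problem reduces to showing that $\D(\text{QCoh}_\chi(\XX)) \to \Dqch(\XX)$ is an equivalence for each character $\chi$.

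Next, I would invoke the result of Hall--Rydh that for a concentrated algebraic stack with affine stabilizers, the canonical functor $\D(\text{QCoh}) \to \Dq$ is an equivalence. This applies to $\XX$: by \Cref{A l: concentrated}, $\pi$ is concentrated, and since $X$ is a qcqs algebraic space (hence concentrated over $\Spec(\bbz)$), so is $\XX$; the stabilizers of $\XX$ are copies of the affine group $\Delta$. Restricting the resulting equivalence $\D(\text{QCoh}(\XX)) \simeq \Dq(\XX)$ to the $\chi$-isotypic summands then finishes the proof. Alternatively, one could descend: choose an fppf cover $U \to X$ trivializing $\pi$, so that $\XX_U \simeq U \times B\Delta$ and $\text{QCoh}_\chi(\XX_U) \simeq \text{QCoh}(U)$ via untwisting by the universal $\chi^{-1}$-twisted line bundle; the result over $U$ is the classical theorem of Bökstedt--Neeman for qcqs algebraic spaces, and one descends along $U \to X$.

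The main technical obstacle is the handling of the unbounded derived category: for bounded complexes the argument is nearly formal, but to get the full unbounded equivalence one needs existence of K-injective (or suitable homotopy-limit-preserving) resolutions in $\text{QCoh}_\chi(\XX)$. This is exactly what the concentration hypothesis of Hall--Rydh is designed to supply, and it is the reason for recording \Cref{A l: concentrated} in advance.
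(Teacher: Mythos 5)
Your proposal is correct and follows essentially the same route as the paper, whose entire proof is to cite \cite[Proposition 6.14(1)]{AHR23} together with \cite[Theorem 1.2]{HNR19} — i.e., to invoke the general Hall--Neeman--Rydh equivalence $\D(\text{QCoh}(\XX)) \simeq \Dq(\XX)$ for suitably nice stacks, the gerbe $\XX$ qualifying because its band is diagonalizable (hence linearly reductive and affine) and $X$ is a quasi-compact, quasi-separated algebraic space. Two small remarks: the character decomposition at the start is superfluous and circular as a reduction (you prove the $\chi$-graded statement by restricting the global equivalence you are trying to establish), and ``concentrated with affine stabilizers'' is looser than the hypotheses of the published positive results — which is precisely why the paper supplements \cite{HNR19} with the étale local structure input from \cite{AHR23}.
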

\begin{proof}
    Apply \cite[Proposition 6.14(1)]{AHR23} and \cite[Theorem 1.2]{HNR19}.
\end{proof}

\begin{rem}
    When $\Delta=\Gm$ this is proven for $X$ a quasi-compact algebraic stack with quasi-finite and separated diagonal in \cite[Example 9.3]{HR17}.
\end{rem}

\begin{cor} \label{A c: chi twisted D(qcoh)}
Let $\Delta$ and $\pi$ be as in \Cref{A p: D(qcoh)}.  Let $\chi$ be a character for $\Delta$.  Then the canonical functors
$$\text{D}(\text{QCoh}_{\chi}(\XX)) \to \text{D}_\chi(\text{QCoh}(\XX)) \to \Dqch(\XX) $$
are equivalences of triangulated categories,
where $\text{D}_\chi(\text{QCoh}(\XX))$ denotes the full subcategory of $\text{D}(\text{QCoh}(\XX))$ consisting of objects with $\chi$-twisted cohomology.
\end{cor}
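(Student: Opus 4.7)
My plan is to treat the two functors in the composition separately, using \Cref{A p: D(qcoh)} to handle the second one and the splitting of \Cref{A p: qcoh} to handle the first. Throughout, write $\iota_\chi:\text{QCoh}_\chi(\XX) \hookrightarrow \text{QCoh}(\XX)$ for the inclusion and $p_\chi:\text{QCoh}(\XX) \to \text{QCoh}_\chi(\XX)$ for the projection onto the $\chi$-isotypic component provided by \Cref{A p: qcoh}; both are exact (the latter is an exact projector in the product decomposition), satisfy $p_\chi \circ \iota_\chi = \text{id}$, and therefore descend to triangulated functors on unbounded derived categories.

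For the second functor $D_\chi(\text{QCoh}(\XX)) \to \Dqch(\XX)$, I would invoke \Cref{A p: D(qcoh)}, which asserts that $\Phi:\text{D}(\text{QCoh}(\XX)) \to \Dq(\XX)$ is an equivalence. The crucial point is that $\Phi$ is induced from the inclusion $\text{QCoh}(\XX) \hookrightarrow \text{Mod}(\mathcal{O}_\XX)$ on hearts, so it intertwines the cohomology functors on both sides. Since the $\chi$-twisted property is a property of each cohomology sheaf (membership in the Serre subcategory $\text{QCoh}_\chi(\XX) \subset \text{QCoh}(\XX)$), it is preserved and reflected by $\Phi$. Hence $\Phi$ restricts to the desired equivalence between the respective full subcategories cut out by $\chi$-twisted cohomology.

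For the first functor $\text{D}(\text{QCoh}_\chi(\XX)) \to \text{D}_\chi(\text{QCoh}(\XX))$, I would verify that the exact functors $\iota_\chi$ and $p_\chi$ induce quasi-inverse equivalences after restriction. The derived inclusion $R\iota_\chi = \iota_\chi$ (since $\iota_\chi$ is exact) clearly lands in $\text{D}_\chi(\text{QCoh}(\XX))$. Conversely, $p_\chi$ descends to a triangulated functor $\text{D}(\text{QCoh}(\XX)) \to \text{D}(\text{QCoh}_\chi(\XX))$, and on an object $F \in \text{D}_\chi(\text{QCoh}(\XX))$ the natural map $\iota_\chi p_\chi F \to F$ is a quasi-isomorphism: both functors are exact, so taking cohomology reduces to the identity $\iota_\chi p_\chi H^i(F) = H^i(F)$, which holds because $H^i(F)$ is $\chi$-twisted by hypothesis. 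The identity $p_\chi \iota_\chi = \text{id}$ on $\text{QCoh}_\chi(\XX)$ gives the equivalence in the other direction.

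I do not anticipate a genuine obstacle beyond bookkeeping: the content is already packaged in \Cref{A p: qcoh}, \Cref{A p: splitting}, and \Cref{A p: D(qcoh)}. The only mildly delicate point is making sure that, when the character group $A$ of $\Delta$ is infinite, the exactness of $p_\chi$ (and hence its commutation with cohomology) is used rather than trying to identify $\text{D}(\text{QCoh}(\XX))$ with an infinite product of derived categories. Working one character at a time via the exact projector $p_\chi$ bypasses this issue entirely.
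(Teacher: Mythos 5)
Your proposal is correct and follows essentially the same route as the paper: the second functor is handled by restricting the equivalence of \Cref{A p: D(qcoh)} (which preserves and reflects the $\chi$-twistedness of cohomology), and the first by applying the exact $\chi$-isotypic projector of \Cref{A p: qcoh} termwise and observing that the natural map $\iota_\chi p_\chi F \to F$ is a quasi-isomorphism when the cohomology of $F$ is $\chi$-twisted. The paper's proof is just a terser version of the same argument.
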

\begin{proof}
    The last equivalence is an immediate consequence of \Cref{A p: D(qcoh)}.  Let $\GG^\bullet$ be a complex of quasi-coherent sheaves with $\chi$-twisted cohomology.  For each $\GG^n$, let $\GG^n_\chi$ be as in \Cref{A p: qcoh}.  There is a natural map $\GG^\bullet_\chi \to \GG^\bullet$ which is clearly a quasi-isomorphism.
\end{proof}
\begin{cor} \label{A c: D(coh)}
Let $\Delta$, $\pi$, and $\chi$ be as in \Cref{A c: chi twisted D(qcoh)}, and suppose that $X$ is noetherian.  Then the canonical functors
$$\text{D}^b(\text{Coh}_{\chi}(\XX)) \to \Dbc(\text{QCoh}_{\chi}(\XX)) \to \Dbcch(\text{QCoh}(\XX) \to \Dbcch(\XX)$$
are equivalences of triangulated categories,
where $\Dbc(\text{QCoh}_{\chi}(\XX))$ denotes the full subcategory of $\text{D}^b(\text{QCoh}_{\chi}(\XX))$ consisting of objects with coherent cohomology.
\end{cor}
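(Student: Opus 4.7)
The plan is to prove each of the three canonical functors is an equivalence individually, chaining together structural results already established in the appendix. I will address the middle and rightmost arrows first, then tackle the leftmost, which carries the real content.

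For the middle arrow $\Dbc(\text{QCoh}_\chi(\XX)) \to \Dbcch(\text{QCoh}(\XX))$, I would use \Cref{A p: qcoh}: the decomposition $\text{QCoh}(\XX) \simeq \prod_\chi \text{QCoh}_\chi(\XX)$ of abelian categories induces a termwise decomposition of complexes and chain maps, hence an equivalence $D^b(\text{QCoh}(\XX)) \simeq \bigoplus_\chi D^b(\text{QCoh}_\chi(\XX))$. This makes $D^b(\text{QCoh}_\chi(\XX)) \hookrightarrow D^b(\text{QCoh}(\XX))$ fully faithful with essential image the complexes having $\chi$-twisted cohomology sheaves; restricting to coherent cohomology on both sides yields the middle equivalence. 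For the rightmost arrow $\Dbcch(\text{QCoh}(\XX)) \to \Dbcch(\XX)$, I would restrict the equivalence $\text{D}_\chi(\text{QCoh}(\XX)) \simeq \Dqch(\XX)$ of \Cref{A c: chi twisted D(qcoh)}. That equivalence is the restriction of the canonical functor $\text{K}(\text{QCoh}(\XX)) \to \text{D}(\mathcal{O}_\XX\text{-Mod})$, so it preserves cohomology sheaves and hence restricts to an equivalence on the subcategories cut out by bounded coherent cohomology.

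The leftmost arrow $\text{D}^b(\text{Coh}_\chi(\XX)) \to \Dbc(\text{QCoh}_\chi(\XX))$ is the substantive point. Full faithfulness is formal because, by \Cref{A p: qcoh}, $\text{Coh}_\chi(\XX) \subset \text{QCoh}_\chi(\XX)$ is a Serre subcategory. For essential surjectivity I plan to use the standard coherent-approximation argument: the key input is that every $\chi$-twisted quasi-coherent sheaf on the noetherian stack $\XX$ is the filtered colimit of its coherent $\chi$-twisted subsheaves, which I would deduce by applying the splitting \Cref{A p: qcoh} to the analogous untwisted fact on the noetherian algebraic stack $\XX$. Given a bounded complex of quasi-coherent $\chi$-twisted sheaves with coherent cohomology, one then inductively replaces the top term by a coherent subsheaf large enough to surject onto the top cohomology sheaf and contain the image of the incoming differential, descending through the complex. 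A clean truncation at the bottom then yields the desired bounded complex of coherent $\chi$-twisted sheaves.

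The main obstacle is justifying the coherent-approximation step for the noetherian algebraic stack $\XX$ rather than a scheme. The cleanest workaround is to reduce to the untwisted assertion $D^b(\text{Coh}(\XX)) \simeq \Dbc(\text{QCoh}(\XX))$ via the direct-summand structure on $\text{Coh}(\XX)$ mirroring \Cref{A p: qcoh}, so that only the untwisted analogue for a noetherian algebraic stack is needed; this can be cited from the framework of \cite{HNR19} and \cite{AHR23} already used in the proof of \Cref{A p: D(qcoh)}.
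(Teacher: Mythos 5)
Your proposal matches the paper's proof: the paper likewise disposes of the second and third arrows by citing \Cref{A c: chi twisted D(qcoh)} and proves the first arrow by the Huybrechts-style descending replacement of terms by coherent subsheaves, where the key input --- that a surjection from a quasi-coherent sheaf onto a coherent sheaf on the noetherian stack $\XX$ factors through a coherent subsheaf --- is isolated as \Cref{A l : surjections} and proved exactly via the filtered-colimit property from \cite{LMB00} that you invoke (together with the observation that quasi-coherent subsheaves of $\chi$-twisted sheaves are $\chi$-twisted). The one caveat is that full faithfulness of $\text{D}^b(\text{Coh}_{\chi}(\XX)) \to \Dbc(\text{QCoh}_{\chi}(\XX))$ is not formal from $\text{Coh}_{\chi}(\XX)$ being a Serre subcategory; it follows from the same coherent-approximation property (cofinality of quasi-isomorphisms from bounded complexes of coherent sheaves), which the machinery you set up supplies in any case.
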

\begin{proof}
    By \Cref{A c: chi twisted D(qcoh)}, we only need to establish the first equivalence.  We argue exactly as in \cite[Propoosition 3.5]{Huy06}.  Let $\GG^\bullet$ be a bounded complex of $\chi$-twisted quasi-coherent $\mathcal{O}_{\XX}$-modules
    $$\dots \rightarrow 0 \rightarrow \GG^n \xrightarrow{d^n} \dots \xrightarrow{d^{m-1}} \GG^m \rightarrow 0 \rightarrow \dots$$
    with coherent $\chi$-twisted cohomology $\mathcal{H}^i$.  Apply \Cref{A l : surjections} below to the surjections
    $$d^j:\GG^j \twoheadrightarrow \text{Im}(d^j) \subset \GG^{j+1}, \quad \text{ker}(d^j) \twoheadrightarrow \mathcal{H}^j$$
    to obtain coherent subsheaves $\GG^j_1$ and $\GG^j_2$, respectively.  Now replace $\GG^j$ be the  coherent sheaf generated by $\GG^j_i$, $i=1,2$, and $\GG^{j-1}$ by the pre-image of the new $\GG^j$ under $\GG^{j-1} \to \GG^j$.  The inclusion defines a quasi-isomorphism, and now $\GG^i$ is coherent for $i \geq j$.  Note that by \Cref{A p: qcoh}, any quasi-coherent subsheaf of a $\chi$-twisted quasi-coherent sheaf is $\chi$-twisted.
\end{proof}

\begin{lem}\label{A l : surjections}
    Let $\XX$ be a noetherian algebraic stack, let $\FF$ be a quasi-coherent $\mathcal{O}_\XX$-module, let $\GG$ be a coherent $\mathcal{O}_\XX$-module, and let $\theta:\FF \twoheadrightarrow \GG$ be a surjective morphism of $\mathcal{O}_\XX$-modules.  Then there exists a coherent $\mathcal{O}_\XX$-submodule $\FF' \subseteq \FF$ such that the restriction of $\theta$ to $\FF'$ is surjective.
\end{lem}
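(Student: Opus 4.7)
The plan is to realize $\FF$ as a filtered colimit of its coherent submodules and then exploit that $\GG$ is a noetherian object. The key input is the standard fact that on a noetherian algebraic stack $\XX$ every quasi-coherent module is the directed union of its coherent submodules; equivalently, $\text{QCoh}(\XX)$ is a locally noetherian Grothendieck abelian category whose finitely generated objects are precisely the coherent sheaves.

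Accepting this, I would write $\FF = \varinjlim_{i \in I}\FF_i$ as such a directed colimit with each $\FF_i \hookrightarrow \FF$ a coherent submodule, and set $\GG_i := \theta(\FF_i) \subseteq \GG$. Each $\GG_i$ is a quotient of the coherent module $\FF_i$, hence coherent, and since filtered colimits are exact in $\text{QCoh}(\XX)$ we have $\bigcup_{i \in I}\GG_i = \theta(\FF) = \GG$. Now $\GG$ is coherent on the noetherian stack $\XX$, hence a noetherian object of $\text{QCoh}(\XX)$, so the ascending chain $\{\GG_i\}_{i \in I}$ of coherent submodules of $\GG$ stabilizes at some index $i_0 \in I$, forcing $\GG_{i_0} = \GG$. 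The coherent submodule $\FF' := \FF_{i_0}$ then satisfies $\theta(\FF') = \GG$, as required.

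The only nontrivial ingredient is the filtered-colimit decomposition above; once this is granted, no real obstacle arises. If one preferred to avoid citing it directly on a stack, one could instead try to reduce to the scheme case via a smooth cover $U \to \XX$, but pulling back destroys the ``submodule of $\FF$'' condition and one would have to descend the resulting coherent approximation, which is strictly more delicate than simply invoking the stack-level statement. I would therefore cite the filtered-colimit fact directly from the Stacks Project and present the two-line argument above.
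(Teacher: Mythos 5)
Your proof is correct, and it shares the paper's key input --- the fact (LMB, Proposition 15.4, which is exactly what the paper cites) that on a noetherian algebraic stack a quasi-coherent module is the filtered colimit of its coherent submodules --- but it finishes differently. The paper passes to a smooth surjective $\phi:U=\Spec(A)\to\XX$ on which $\phi^*\GG$ is generated by finitely many global sections, uses the colimit decomposition to find, for each generator, coherent subsheaves of $\FF$ hitting it after a further covering, and takes $\FF'$ to be the coherent subsheaf generated by these finitely many pieces. You instead apply $\theta$ to the colimit, observe that the images $\GG_i=\theta(\FF_i)$ form a directed family of (coherent) subobjects of $\GG$ with union $\GG$, and invoke that $\GG$ is a noetherian object of $\text{QCoh}(\XX)$ to conclude the family attains the value $\GG$. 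Your route is shorter and stays entirely on the stack, at the cost of leaning on the abstract statement that coherent sheaves on a noetherian stack are noetherian objects (equivalently, that quasi-coherent subsheaves of coherent sheaves are coherent); the paper's route is more hands-on and only uses global generation after a cover. One small imprecision: the $\GG_i$ form a directed family rather than an ascending chain, so you should argue via the existence of a maximal element among the $\GG_i$ (which equals $\GG$ by directedness) rather than literal chain stabilization; this is cosmetic and does not affect correctness.
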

\begin{proof}
We can find a smooth surjective map $\phi:U=\Spec(A) \to \XX$ such that $\phi^*\GG$ is generated by global sections $s_1,\dots,s_n$.  It suffices to show that there exists a coherent subsheaf $\FF'\subseteq \FF$ such that $\phi^*\FF' \to \phi^*\GG$ is surjective. By \cite[Proposition 15.4]{LMB00}, $\FF$ is the filtered colimit of its coherent subsheaves.  It follows that for each $1 \leq i  \leq n$, we can choose a finite covering $\psi_{j}:V_{ij} \to U$ and coherent subsheaves $\FF_{ij} \subseteq \FF$ such that $\psi_{j}^*s_{i}$ is in the image of $\FF_{ij}(V_{ij}) \to \GG(V_{ij})$.  We can take $\FF'$ to be the subsheaf of $\FF$ generated by all of the $\FF_{ij}$. 
\end{proof}

\begin{lem}\label{A l: same} Let $X$ be a regular noetherian quasi-separated algebraic space, let $\Delta$ be a diagonalizable group, let $\pi:\XX \to X$ be a $\Delta$-gerbe, and let $\chi$ be a character of $\Delta$.  Then the canonical functor
$$\Perf_\chi(\XX) \to \Dbcch(\XX) \simeq D^b(\text{Coh}_{\chi}(\XX))$$
is an equivalence of triangulated categories.
\end{lem}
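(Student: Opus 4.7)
The plan is to reduce to a descent argument using the regularity of $X$. Full faithfulness of the canonical functor $\Perf_\chi(\XX) \to \Dbcch(\XX)$ is automatic, since both sides are strictly full subcategories of $\Dq(\XX)$, so I only need to verify essential surjectivity: every $\FF \in \Dbcch(\XX)$ is perfect. Since the character decomposition of \Cref{A p: splitting} respects both perfectness and boundedness with coherent cohomology, this follows once we establish the stronger (untwisted) assertion that $\Dbc(\XX) = \Perf(\XX)$, and this is what I would aim for.

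The key geometric input is that $\pi:\XX \to X$ is a smooth morphism, since $\Delta$ is diagonalizable and hence smooth over $\Spec(\bbz)$. I would choose a smooth surjection $u:U \to \XX$ from a scheme $U$; the composition $U \to X$ is then smooth, so $U$ is a regular noetherian scheme. For any $\FF \in \Dbc(\XX)$, the pullback $u^*\FF$ lies in $\Dbc(U)$, and since $U$ is regular noetherian, every object of $\Dbc(U)$ admits a finite resolution by locally free sheaves of finite rank (because the local rings of $U$ have finite global dimension by Serre's theorem). Thus $u^*\FF \in \Perf(U)$.

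The final step is to descend perfectness from $U$ back to $\XX$, using that perfectness is local for the fppf topology and that the smooth cover $u$ is in particular fppf. Together with the equivalence $\Dbcch(\XX) \simeq D^b(\text{Coh}_\chi(\XX))$ provided by \Cref{A c: D(coh)}, this yields the desired chain of equivalences. The main obstacle is this last descent step; although it is standard for schemes and for reasonable algebraic stacks, some care is needed in the lisse-\'etale setup considered here, in particular to confirm that the notion of perfect complex used in this paper agrees with the one that descends cleanly along the smooth cover $u$.
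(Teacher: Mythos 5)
Your argument is correct, but it takes a genuinely different route from the paper's. The paper first trivializes the gerbe: since $\pi$ is smooth it admits sections étale-locally on $X$, so one reduces to $\XX \simeq X \times B\Delta$; it then tensors with the line bundle attached to $\chi$ to move from $\Dbcch(\XX)$ to $\text{D}^b_{\text{coh},0}(\XX)$, and uses the section $X \to X \times B\Delta$ to identify the latter with $\Dbc(X)$, so that the classical fact ($\Dbc=\Perf$ for regular noetherian schemes) is ultimately invoked on an étale cover of $X$ itself. You bypass both the trivialization and the twisting: you choose a smooth presentation $u:U\to\XX$ by a scheme, note that $U$ is regular because $\pi$ and $u$ are smooth and $X$ is regular, and check perfectness of $u^*\FF$ there. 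The descent step you flag as the main obstacle is in fact harmless in the framework the paper works in: for the perfect complexes of \cite{HR17}, perfectness of an object of $\Dq(\XX)$ is tested precisely on smooth maps from affine schemes, so it is smooth-local by construction. What your route buys is brevity and the stronger untwisted statement $\Dbc(\XX)=\Perf(\XX)$ in one stroke, with the character decomposition handling the $\chi$-twisted refinement for free; what the paper's route buys is that the stacky input is confined to the trivialization and the twisting equivalence of \Cref{A l:tensor}, with the classical statement quoted only for honest schemes. One small caveat in your write-up: on a general regular noetherian scheme an object of $\Dbc(U)$ need not admit a \emph{global} finite locally free resolution (that requires the resolution property), but it is still perfect because perfectness is a local condition, so your conclusion stands.
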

\begin{proof}
  It suffices to show that that any object of $\Dbcch(\XX)$ is perfect.  Since $\pi:\XX \to X$ is smooth, it admits sections étale locally on $X$, and therefore the gerbe $\XX \to X$ is locally trivial in the lisse-étale topology.  So we may reduce to the case where $\XX \simeq X \times B\Delta$.  Let $\mathscr{L}$ denote the pullback of the standard representation of $\Gm$ along $B\chi:X \times B\Delta \to X \times B\Gm$.  By \Cref{A l:tensor}, tensoring with $\mathscr{L}$ gives an equivalence of categories $\text{D}^b_{\text{coh}, 0} (\XX) \xrightarrow{\sim}\Dbcch(\XX)$, where we denote the trivial character $\Delta$ by $0$.  It suffices to verify that a complex is perfect after tensoring with line bundle, so we may reduce to the case where $\chi=0$.  Pulling back along the section $X \to \XX$ gives an equivalence of categories $\Dbc(X) \xrightarrow{\sim} \text{D}^b_{\text{coh},0}(\XX)$, so it suffices to establish the result for $X$.  In fact, it suffices to establish the result for any quasi-compact scheme $\tilde{X}$ with a surjective étale map $\tilde{X} \to X$.  In this case the result is well-known (e.g., see \cite[Proposition 3.26]{Huy06}).
\end{proof}

\begin{rem}\label{A r: notation}
In the case where $\Delta=\Gm$, $\XX$ is a $\Delta$-gerbe, and $\chi=\text{id}$, we will typically refer to $\chi$-twisted sheaves as $[\XX]$-twisted sheaves and write $\Db(X,[\XX])$ instead of $\Db(\text{Coh}_{\text{id}}(\XX))$, where $[\XX] \in H^2(X,\Gm)$ denotes the cohomology class corresponding to $\XX$.
\end{rem}
 Next we recall how some of the standard derived functors interact with the decomposition given in \Cref{A p: decomp}.  Here we only explain what is needed to work with twisted Fourier-Mukai transforms (e.g., we do not define the tensor product between two twisted sheaves living on different gerbes); we refer the interested reader to \cite{BS21} for more details.

\begin{lem} \cite[Theorem 5.4]{BS21} \label{A l:tensor}
    Let $\Delta$ be a diagonalizable group, and let $\pi:\XX \to X$ be a $\Delta$-gerbe.   Suppose that $\FF_\chi$ and $\GG_\psi$ are objects in $\Dq(\XX)$ which are homogeneous for the characters $\chi$ and $\psi$.  Then the objects
    $$\FF_\chi\otimes^L\GG_\psi,\quad R\mathcal{H}om(\GG_\psi,\FF_\chi)$$
    are $\chi\psi$- and $\chi\psi^{-1}$-homogeneous, respectively.
\end{lem}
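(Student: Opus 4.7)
The plan is to reduce to the case of a trivial gerbe $\XX \simeq X \times B\Delta$ and then verify the assertion by direct calculation in each isotypic component. Since $\chi$-homogeneity can be checked lisse-étale locally on $X$, and the formation of $\otimes^L$ and $R\HHom$ commutes with flat base change, I first use that $\pi:\XX\to X$ is smooth (because $\Delta$ is smooth) and hence admits sections after an étale cover of $X$; this lets me assume $\XX \simeq X \times B\Delta$.

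In the trivial case, I would write $\mathscr{L}_\chi$ for the pullback of the standard representation of $\Gm$ along $B\chi: X \times B\Delta \to X \times B\Gm$; this is a $\chi$-twisted line bundle on $\XX$. By \Cref{A p: qcoh}, tensoring with $\mathscr{L}_\chi$ gives an equivalence between $\text{QCoh}(X) \simeq \text{QCoh}_0(\XX)$ and $\text{QCoh}_\chi(\XX)$, so every object $\FF_\chi \in \Dqch(\XX)$ has the form $\pi^*F \otimes \mathscr{L}_\chi$ for a unique $F \in \Dq(X)$, and likewise $\GG_\psi \simeq \pi^*G \otimes \mathscr{L}_\psi$ for some $G \in \Dq(X)$. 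Since $\mathscr{L}_\chi \otimes \mathscr{L}_\psi \simeq \mathscr{L}_{\chi\psi}$ and $R\HHom(\mathscr{L}_\psi, \mathscr{L}_\chi) \simeq \mathscr{L}_{\chi\psi^{-1}}$, the projection formula and the fact that $\mathscr{L}_\chi$ is invertible give
$$\FF_\chi \otimes^L \GG_\psi \simeq \pi^*(F \otimes^L G) \otimes \mathscr{L}_{\chi\psi}, \qquad R\HHom(\GG_\psi, \FF_\chi) \simeq \pi^* R\HHom(G, F) \otimes \mathscr{L}_{\chi\psi^{-1}},$$
which are $\chi\psi$- and $\chi\psi^{-1}$-homogeneous, respectively.

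The main obstacle is ensuring that the derived functors $\otimes^L$ and $R\HHom$ are compatible with the isotypic decomposition of \Cref{A p: splitting}, since strictly speaking one must compute them via K-flat or K-injective resolutions in $\text{QCoh}(\XX)$ rather than componentwise. This is handled by \Cref{A c: chi twisted D(qcoh)}, which identifies $\D(\text{QCoh}_\chi(\XX))$ with $\Dqch(\XX)$, together with \Cref{A p: qcoh}, which exhibits $\text{QCoh}_\chi(\XX)$ as an exact direct summand of $\text{QCoh}(\XX)$ with projection as both left and right adjoint to the inclusion. Consequently any K-flat (resp.\ K-injective) resolution in the ambient category restricts to a resolution of the same type inside the $\chi$-component, which justifies the componentwise computation above.
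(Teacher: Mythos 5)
The paper offers no argument for this lemma at all: it is quoted directly from \cite[Theorem 5.4]{BS21}, so any proof you write is by definition a different route. Your strategy --- trivialize the gerbe locally, identify each isotypic piece with $\Dq(X)$ via an invertible twisted line bundle $\mathscr{L}_\chi$, and compute --- is the natural direct argument and is essentially how Bergh--Schn\"urer proceed; for the derived tensor product it works once two points are repaired. First, the lemma does not assume $\Delta$ smooth: diagonalizable groups here include $\mu_n$, so $\pi$ need not be smooth and need not admit sections \'etale-locally. Since all gerbes in this paper are fppf gerbes, you should trivialize after an fppf cover instead; homogeneity is an fppf-local condition, $L$-pullback is monoidal, and flat pullback commutes with taking cohomology sheaves, so the reduction survives verbatim. (Relatedly, \Cref{A c: chi twisted D(qcoh)} is only stated for $X$ a quasi-compact, quasi-separated algebraic space, while the lemma allows an arbitrary algebraic stack; in the trivialized situation you should lean only on \Cref{A p: qcoh} and your resolution remark, which do not need those hypotheses.)

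Second, and more substantively, the local-to-global reduction is not justified for $R\mathcal{H}om$: the paper defines $R\mathcal{H}om(P,-)$ on $\Dq$ as the right adjoint of $(-)\otimes^L P$, and this quasi-coherent internal Hom does not commute with flat base change for arbitrary $P\in\Dq$ (one needs pseudocoherence or boundedness hypotheses), so you cannot simply check its homogeneity after passing to a cover. The robust way to get the second assertion is purely formal: once you know that $(-)\otimes^L\GG_\psi$ carries $\text{D}_{\text{qc},\lambda}(\XX)$ into $\text{D}_{\text{qc},\lambda\psi}(\XX)$ for every character $\lambda$, i.e.\ permutes the factors of the decomposition in \Cref{A p: splitting} by multiplication by $\psi$, its right adjoint $R\mathcal{H}om(\GG_\psi,-)$ must permute them by $\psi^{-1}$ --- the same uniqueness-of-adjoints device the paper uses for $g^\times$ in \Cref{A p: decomp}. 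With the fppf fix and this adjunction argument for the Hom half, your proof is complete.
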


\begin{prop}\label{A p: decomp}
    Let $X$ be an algebraic stack, and let $\pi:\XX \to X$ be a gerbe banded by a diagonalizable group $\Delta$.  Let $f:Y \to X$ be a morphism, and let $g:\YY \to \XX$ denote the pullback of $f$ along $\pi$.  Then the functors
    $$Lg^*:\Dq(\XX)\to \Dq(\YY),\quad Rg_*:\Dq(\YY)\to\Dq(\XX)$$
    respect the decompositions of $\Dq(\XX)$ and $\Dq(\YY)$ given in \Cref{A p: splitting}.  If $f$ is concentrated, then $g^\times$ exists and it respects these decompositions.
\end{prop}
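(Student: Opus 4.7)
The plan is to establish the claim for $Lg^*$ first, then deduce the statements for $Rg_*$ and $g^\times$ formally from adjunction together with the orthogonality of the decomposition in \Cref{A p: splitting}.

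For $Lg^*$, I would note that $\YY = \XX \times_X Y$ is itself a $\Delta$-gerbe, with banding pulled back from that of $\XX$ along $g$. Consequently, the underived pullback $g^*$ of a $\chi$-twisted quasi-coherent sheaf is again $\chi$-twisted, since being $\chi$-twisted is a compatibility condition with the $\Delta$-action and $g$ intertwines the two $\Delta$-actions; this is a smooth-local check which reduces to the case of the trivial gerbe $X \times B\Delta$. Since $\text{QCoh}_\chi(\XX)$ is a Serre subcategory by \Cref{A p: qcoh}, the derived statement follows by checking on cohomology sheaves: $\mathcal{H}^i(Lg^*\FF)$ is $\chi$-twisted for all $i$ whenever $\FF \in \Dqch(\XX)$.

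For $Rg_*$, I would use that the decomposition of \Cref{A p: splitting} is a product of triangulated categories, so the factors $\Dqch(\XX)$ are pairwise orthogonal. Given $\GG \in \Dqch(\YY)$ and $\HH \in \text{D}_{\text{qc},\chi'}(\XX)$ with $\chi' \neq \chi$, the adjunction $(Lg^*, Rg_*)$ gives
$$\text{Hom}(\HH, Rg_*\GG) \simeq \text{Hom}(Lg^*\HH, \GG) = 0,$$
since $Lg^*\HH \in \text{D}_{\text{qc},\chi'}(\YY)$ by the case already handled. Hence every component of $Rg_*\GG$ in $\text{D}_{\text{qc},\chi'}(\XX)$ for $\chi' \neq \chi$ vanishes, so $Rg_*\GG \in \Dqch(\XX)$. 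When $f$ is concentrated, $g$ is also concentrated (concentrated morphisms are stable under base change), so $g^\times$ exists by \cite[Theorem 4.14]{HR17}; the dual argument using the adjunction $(Rg_*, g^\times)$ and the preservation just proved for $Rg_*$ shows that $g^\times$ also respects the decomposition.

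The only step with genuine content is the statement for $Lg^*$, and the only real obstacle there is verifying that underived pullback preserves $\chi$-twisting, which is straightforward once one trivializes the gerbes smooth-locally. Everything else is formal from the orthogonality of the decomposition and the standard adjunctions.
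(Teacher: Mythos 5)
Your proposal is correct, but it takes a more self-contained route than the paper. For the statements about $Lg^*$ and $Rg_*$ the paper simply cites \cite[Proposition 5.6]{BS21}, whereas you essentially reprove that result: a direct local verification that pullback preserves $\chi$-twisting, followed by a clean adjunction-plus-orthogonality argument for $Rg_*$ (taking $\HH$ to be the $\chi'$-component of $Rg_*\GG$ forces its identity morphism to vanish). For $g^\times$ both arguments are the same in substance: the paper invokes uniqueness of right adjoints of functors respecting the product decomposition, which is exactly your dual adjunction computation; both correctly reduce existence to \cite[Theorem 4.14]{HR17} after noting that concentrated morphisms are stable under base change. One small imprecision in your $Lg^*$ step: since $g$ need not be flat, $Lg^*$ is not exact, so "checking on cohomology sheaves" is not literally $\mathcal{H}^i(Lg^*\FF) = g^*\mathcal{H}^i(\FF)$; you need either the spectral sequence $L_pg^*\mathcal{H}^q(\FF) \Rightarrow \mathcal{H}^{p+q}(Lg^*\FF)$ together with the fact that each $L_pg^*$ preserves $\chi$-twisting and that $\text{QCoh}_\chi$ is a Serre subcategory (\Cref{A p: qcoh}), or, more simply, the observation that the abelian-level decomposition lets one compute $Lg^*$ of a $\chi$-twisted complex using a resolution lying entirely in the $\chi$-component. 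With that refinement (and the usual care in passing from bounded-above to unbounded complexes, which is what the citation to \cite{BS21} is really buying), your argument is complete. The trade-off is the expected one: your version makes the proposition independent of \cite{BS21} at the cost of redoing a local computation the reference already packages.
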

\begin{proof}
    The statement about $Lg^*$ and $Rg_*$ is \cite[Propoisiton 5.6]{BS21}.  The existence of $g^\times$ is \cite[Theorem 4.14(1)]{HR17}.  $g^\times$ respects the decomposition of \Cref{A p: splitting} because $g_*$ does and by uniqueness of right adjoints.
\end{proof}
We are interested in the following special case:
\begin{cor}\label{A c: push}
    Let $S$ be an algebraic stack, and let $X$ and $Y$ be algebraic stacks over $S$.  Let $\Delta$ be a diagonalizable group, let $\pi_\XX:\XX \to X$ and $\pi_\YY:\YY \to Y$ be $\Delta$-gerbes, and let $\chi$ and $\psi$ be characters.  Then $\XX \times_S \YY \to X\times_S Y$ is a $\Delta\times \Delta$-gerbe, and the restriction of $Rp_{\XX,*}:\Dq(\XX\times_S \YY) \to \Dq(\XX)$ to $\text{D}_{\text{qc},(\chi,\psi)}(\XX)$ factors through $\Dqch(\XX)$.  In fact, if $\psi \neq 0$, and $\FF \in \text{D}_{\text{qc},(\chi,\psi)}$, then $Rp_{\XX,*}\FF \simeq 0$.
\end{cor}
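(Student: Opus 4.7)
The plan is to factor $p_\XX$ through the intermediate stack $\XX \times_S Y$ and analyze the two pieces separately using \Cref{A p: decomp} and a vanishing lemma for gerbe pushforwards. For the first assertion, I would note that $\XX \times_S \YY \to X \times_S Y$ decomposes as
$$\XX \times_S \YY \xrightarrow{\pi} \XX \times_S Y \to X \times_S Y,$$
where $\pi$ is the base change of $\pi_\YY$ along $\XX \times_S Y \to Y$ and the second map is the base change of $\pi_\XX$ along $X \times_S Y \to X$. Both are $\Delta$-gerbes, and the composition of a $\Delta$-gerbe with a $\Delta$-gerbe is a $\Delta \times \Delta$-gerbe with inertia splitting accordingly.

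For the pushforward statement, let $q : \XX \times_S Y \to \XX$ denote the projection, so that $p_\XX = q \circ \pi$. The morphism $q$ is the pullback of $X \times_S Y \to X$ along $\pi_\XX$, so \Cref{A p: decomp} applies directly and tells us that $Rq_*$ preserves the character decomposition of \Cref{A p: splitting}; in particular it sends $\chi$-twisted objects to $\chi$-twisted objects. The analysis of $R\pi_*$ rests on the following key vanishing lemma, which I would prove as an intermediate step: for any $\Delta$-gerbe $\rho : \ZZ \to Z$ and any character $\psi$ of $\Delta$, the functor $R\rho_*$ annihilates $\text{D}_{\text{qc},\psi}(\ZZ)$ if $\psi \neq 0$ and restricts to an equivalence $\text{D}_{\text{qc},0}(\ZZ) \xrightarrow{\sim} \Dq(Z)$ otherwise. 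This lemma is proven by smooth descent: since $\rho$ is concentrated by \Cref{A l: concentrated}, flat base change is available and reduces us to the trivial gerbe $Z \times B\Delta \to Z$, where $R\rho_*$ computes $\Delta$-invariants. A diagonalizable group acting through a nontrivial character has no nonzero invariants, giving the vanishing, while the trivial character recovers the untwisted module.

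Combining: for $\FF$ that is $(\chi,\psi)$-twisted on $\XX \times_S \YY$, view it as $\psi$-twisted for the $\Delta$-gerbe $\pi$. If $\psi \neq 0$, the vanishing lemma gives $R\pi_*\FF = 0$ and hence $Rp_{\XX,*}\FF = 0$. If $\psi = 0$, then $R\pi_*\FF$ is identified with an object on $\XX \times_S Y$ that retains the $\chi$-twisted structure from the first $\Delta$-factor (the one banding $\XX \times_S Y \to X \times_S Y$), and applying $Rq_*$ via \Cref{A p: decomp} yields a $\chi$-twisted object on $\XX$, as required.

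The main obstacle I anticipate is not the vanishing itself but rather verifying that the $\chi$-twisting from the first $\Delta$-factor is genuinely preserved through the $\psi = 0$ case of the vanishing lemma. This requires showing that the splitting of \Cref{A p: splitting} for the composite $\Delta \times \Delta$-gerbe is compatible with the iterated splittings along the two intermediate $\Delta$-gerbes, which itself reduces by smooth descent to the trivial-gerbe case and should be routine.
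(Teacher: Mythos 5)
Your proposal matches the paper's proof in all essentials: the same factorization of $p_\XX$ through $\XX\times_S Y$, the same appeal to \Cref{A p: decomp} for the projection, and the same vanishing statement for pushforward along a $\Delta$-gerbe of a $\psi$-homogeneous object with $\psi\neq 0$ (the paper isolates this as \Cref{A l : push}, proved exactly as you describe by reducing to the trivial gerbe and taking invariants). The compatibility step you flag as the remaining obstacle is handled in the paper by a short inertia computation (restricting the $\Gm\times\Gm$-action on a $(\chi,\psi)$-homogeneous sheaf to the sub-inertia of the first factor), and your proposed reduction to the trivial-gerbe case is a correct way to see the same thing.
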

\begin{proof}
    We have a cartesian square
$$\begin{tikzcd}
{\XX\times_S \YY} \arrow[r, "\pi_\XX \times \text{id}"] \arrow[d, swap, "\text{id}\times \pi_\YY"] & {X \times_S \YY} \arrow[d, "\text{id}\times \pi_\YY"] \\
{\XX \times_S Y} \arrow[r, "\pi_\XX \times \text{id}"] \arrow[d, swap, "\text{pr}_\XX"] & {X \times_S Y} \arrow[d, "\text{pr}_{X}"] \\
{\XX} \arrow[r, "\pi_\XX"]           & {X}          
\end{tikzcd}$$
so by the \Cref{A p: decomp}, it suffices to show that any $(\chi,\psi)$-homogeneous object is $\chi$-homogeneous for the action of $I_{\XX\times \YY/X \times \YY}$.  It follows from the definitions that it suffices to consider the case of a sheaf.  Let $\FF$ be a $(\chi,\psi)$-homogeneous sheaf.  Then we have a commutative diagram
$$\begin{tikzcd}
{I_{\XX\times \YY / X \times Y} \times \FF} \arrow[r] \arrow[d, swap, "(\chi\text{,}\psi) \times \text{id}"] & {\FF} \arrow[d, "\text{id}"] \\
{\Gm \times \FF} \arrow[r]           & {\FF}          
\end{tikzcd}$$
where the horizontal arrows are the usual actions.  Via the banding $I_{\XX\times \YY / X \times Y}\simeq \Gm \times \Gm$, $I_{\XX \times \YY/X \times \YY}$ is identified with the first copy of $\Gm$.  Hence, restricting the above diagram to $I_{\XX \times \YY/X \times \YY}$ we get a commuting square
$$\begin{tikzcd}
{I_{\XX\times \YY / X \times \YY} \times \FF} \arrow[r] \arrow[d, swap, "\chi \times \text{id}"] & {\FF} \arrow[d, "\text{id}"] \\
{\Gm \times \FF} \arrow[r]           & {\FF}          
\end{tikzcd}$$
which tells us that $\FF$ is $\chi$-homogeneous, as desired.  For the second part, note that it suffices to show that $R(\text{id} \times \pi_\YY)_*\FF \simeq 0$.  This follows from \Cref{A l : push}.
\end{proof}

\begin{lem} \label{A l : push}
    Let $\pi:\XX \to X$ be a $\Delta$-gerbe.  Let $P \in \Dqch(\XX)$.  If $\chi \neq 0$, then $R\pi_*P \simeq 0$.
\end{lem}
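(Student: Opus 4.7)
The plan is to use the adjunction $(L\pi^*, R\pi_*)$ together with the orthogonality between different twisted subcategories of $\Dq(\XX)$ given by the splitting in \Cref{A p: splitting}.

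I would first observe that for any $Q \in \Dq(X)$, the pullback $L\pi^*Q$ lies in $\text{D}_{\text{qc},0}(\XX)$, i.e., is untwisted. This is because the natural action of the relative inertia $I_{\XX/X} \cong \Delta_\XX$ on $L\pi^*Q$ is trivial; equivalently, $L\pi^*$ identifies $\Dq(X)$ with the untwisted component $\text{D}_{\text{qc},0}(\XX)$ of the splitting. This can be verified fppf-locally on $X$ after trivializing the gerbe, where it reduces to the fact that pulling back along $X \times B\Delta \to X$ produces $\Delta$-representations of weight $0$.

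Next, adjunction gives, for every $Q \in \Dq(X)$,
\[
\text{Hom}_{\Dq(X)}(Q, R\pi_*P) \;\simeq\; \text{Hom}_{\Dq(\XX)}(L\pi^*Q, P).
\]
Since $L\pi^*Q$ sits in $\text{D}_{\text{qc},0}(\XX)$ while $P$ sits in $\Dqch(\XX)$ with $\chi \neq 0$, they lie in different factors of the direct product decomposition of \Cref{A p: splitting}. Distinct components of a categorical product are pairwise orthogonal, so $\text{Hom}(L\pi^*Q, P) = 0$. Specializing $Q = R\pi_*P$, this forces $\text{id}_{R\pi_*P} = 0$, hence $R\pi_*P \simeq 0$.

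The one delicate point is the use of \Cref{A p: splitting} in its full strength, namely that it provides a direct \emph{product} decomposition of categories rather than merely a direct sum, from which the vanishing of $\Hom$ between different components follows. If one preferred to avoid this observation, an alternative route is to trivialize the gerbe fppf-locally on $X$: since $\pi$ is concentrated by \Cref{A l: concentrated}, $R\pi_*$ commutes with flat base change, and on the trivial gerbe $X \times B\Delta \to X$ the functor $\pi_*$ becomes the $\Delta$-invariants functor, which is exact by linear reductivity of $\Delta$ and manifestly annihilates $\chi$-twisted quasi-coherent sheaves when $\chi \neq 0$.
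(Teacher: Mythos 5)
Your argument is correct, and your primary route is genuinely different from the paper's. The paper proves the lemma by direct computation: it uses the hypercohomology spectral sequence $R^p\pi_*\mathcal{H}^q(\FF)\Rightarrow R^{p+q}\pi_*\FF$ (valid since $\pi$ is concentrated) to reduce to the case of a single sheaf, then flat base change to reduce to the trivial gerbe $X \times B\Delta$, where $R\pi_*$ is the invariants functor and the vanishing is immediate --- essentially your ``alternative route,'' except that the paper reduces to sheaves via the spectral sequence rather than invoking exactness of invariants on complexes (both work). Your main argument instead deduces the vanishing formally: $L\pi^*$ lands in $\text{D}_{\text{qc},0}(\XX)$ (a fact the paper itself uses in the proof of \Cref{A l: duality 1}), the product decomposition of \Cref{A p: splitting} forces $\Hom(L\pi^*Q,P)=0$ for $\chi\neq 0$ since fully faithfulness of the equivalence $\prod_\chi \Dqch(\XX)\to\Dq(\XX)$ gives orthogonality of distinct components, and then adjunction plus $Q=R\pi_*P$ kills the identity of $R\pi_*P$. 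This is slicker --- no spectral sequence, no base change, no local trivialization --- at the cost of leaning on the full strength of the Bergh--Schn\"urer decomposition and on the weight-$0$ image of $L\pi^*$; the paper's computation is more elementary and self-contained but less conceptual. Both are complete proofs.
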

\begin{proof}
    Since $\pi$ is concentrated, we may apply \cite[Theorem 2.6(2)]{HR17} to compute $R\pi_*\FF$ via the hypercohomology spectral sequence
    $$R^p\pi_*\mathcal{H}^q(\FF)\Rightarrow R^{p+q}\pi_*\FF.$$
    In particular, we may reduce to the case where $\FF$ is a sheaf.  By flat base change \cite[Theorem 2.6(4)]{HR17}, we may reduce to the case where $\XX \simeq X \times B\Delta$.  In this case, $R\pi_*$ coincides with the fixed points functor, and the result is immediate.
\end{proof}

\begin{prop}\cite[Proposition 5.7]{BS21}\label{A p: bands}
    Let $\XX$ and $\XX'$ be gerbes over $X$ banded by diagonalizable groups $\Delta_X$ and $\Delta'_X$ respectively, and let $\rho:\XX \to \XX'$ be a morphism over $X$ with induced morphism $\phi:\Delta \to \Delta'$ on the bands.  Fix a character $\chi':\Delta' \to \Gm$, and let $\chi=\chi' \circ \phi$.  Then the pull-back functor $L\rho^*$ takes $\chi'$-homogeneous objects to $\chi$-homogeneous objects and induces an equivalence
    $$L\rho^*:\text{D}_{\text{qc},\chi'}(\XX') \to \Dqch(\XX)$$
    of triangulated categories.
\end{prop}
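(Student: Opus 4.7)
The plan is to split the claim into preservation of $\chi$-homogeneity and the equivalence itself, handling each by reducing to a split local model in which $\XX$ and $\XX'$ are trivial gerbes.

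For preservation, I would work directly with inertia: under the bandings, the natural morphism of relative inertia $I_{\XX/X} \to \rho^* I_{\XX'/X}$ over $\XX$ is identified with $\phi \times \text{id}: \Delta_\XX \to \Delta'_\XX$. Given $F \in \text{D}_{\text{qc},\chi'}(\XX')$, the action of $I_{\XX/X}$ on $L\rho^* F$ is obtained by restricting the action of $\rho^* I_{\XX'/X}$ along this morphism, so if $F$ is acted on by $\chi'$ then $L\rho^* F$ is acted on by $\chi' \circ \phi = \chi$. This shows $L\rho^*$ lands in $\Dqch(\XX)$.

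For the equivalence, I would reduce to the split case by flat descent. The formation of $L\rho^*$, $R\rho_*$, and the homogeneous subcategories all commute with flat base change along maps into $X$ by \Cref{A p: decomp} together with \cite[Theorem 2.6(4)]{HR17}. Since any gerbe banded by a diagonalizable group is smooth locally trivial, we may pass to a smooth cover of $X$ over which $\XX \simeq X \times B\Delta$, $\XX' \simeq X \times B\Delta'$, and $\rho \simeq \text{id}_X \times B\phi$. In this split setting a $\chi$-twisted quasi-coherent sheaf on $X \times B\Delta$ is precisely an $\mathcal{O}_X$-module equipped with the unique $\Delta$-action prescribed by $\chi$, giving a canonical identification $\Dqch(X \times B\Delta) \simeq \Dq(X)$, and likewise on the primed side; under these identifications, $L\rho^*$ restricted to the $\chi'$-subcategory is simply the identity functor on $\Dq(X)$, hence an equivalence. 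Globally, the unit and counit of the adjunction $(L\rho^*, R\rho_*)$ restricted to the homogeneous subcategories become isomorphisms because they do so smooth-locally.

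The main obstacle is formalizing the descent step: one must verify that the local identifications $\Dqch(\XX) \simeq \Dq(X)$ assemble compatibly with $L\rho^*$ across the cover. Phrasing the equivalence through the adjunction $(L\rho^*, R\rho_*)$ rather than through an explicit trivialization avoids choosing coherent identifications and reduces the global claim to the smooth-local computation carried out above.
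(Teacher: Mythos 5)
The paper offers no proof of this statement; it is quoted verbatim from \cite[Proposition 5.7]{BS21}, so I am judging your argument on its own merits. Your first two steps are sound and are the standard route: the inertia argument for preservation of homogeneity is exactly the kind of computation the paper itself performs in the proof of \Cref{A c: push}, and reducing the equivalence to the split model $X\times B\Delta \to X\times B\Delta'$, where both homogeneous subcategories are identified with $\Dq(X)$, is the right local computation (modulo the routine check that $\rho$ is concentrated, as in \Cref{A l: concentrated}, so that $R\rho_*$ satisfies flat base change).

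The genuine gap is in the last step. The right adjoint of the \emph{restricted} functor $L\rho^*:\text{D}_{\text{qc},\chi'}(\XX')\to \Dqch(\XX)$ is not $R\rho_*$ but $p_{\chi'}\circ R\rho_*$, where $p_{\chi'}$ is the projection coming from \Cref{A p: splitting}: the pushforward $R\rho_*$ does not preserve homogeneity, because it spreads a $\chi$-homogeneous object over \emph{all} characters $\chi''$ of $\Delta'$ with $\chi''\circ\phi=\chi$. Consequently the unit of the adjunction $(L\rho^*,R\rho_*)$ fails to be an isomorphism even smooth-locally, so the statement "the unit and counit become isomorphisms because they do so smooth-locally" is false as written. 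Concretely, take $X=\Spec(k)$, $\Delta=\mu_n\hookrightarrow\Gm=\Delta'$, $\chi'=\text{id}$: for a weight-one representation $W$ of $\Gm$ one has $R\rho_*L\rho^*W=\bigoplus_{m\equiv 1\,(n)}W(m)$, and the unit is the inclusion of the $m=1$ summand. The repair is straightforward: work with the induced adjunction whose right adjoint is $p_{\chi'}\circ R\rho_*$, verify in the split model that its unit and counit are isomorphisms (they are, by the representation-theoretic computation you describe), and note that $p_{\chi'}$ commutes with the flat base change used in the descent because the decomposition of \Cref{A p: splitting} is compatible with pullback by \Cref{A p: decomp}. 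With that correction, and with the observation that pullback along a smooth cover of $X$ is conservative on $\Dq$, your argument goes through.
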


\begin{cor} \label{A c : pullback}
    Let $S$ be an algebraic stack, and let $\XX$ and $\YY$ be algebraic stacks over $S$.  Let $\Delta$ be a diagonalizable group, let $\pi_\XX:\XX \to X$ and $\pi_\YY:\YY \to Y$ be $\Delta$-gerbes, and let $\chi$ be a character.  Then the restriction of $Lp^*_{\XX}:\Dq(\XX)\to \Dq(\XX \times_S \YY)$ to $\Dqch(\XX)$ factors through $\text{D}_{\text{qc},(\chi,0)}(\XX \times_S \YY)$.
\end{cor}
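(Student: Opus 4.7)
The plan is to mimic the argument used to prove \Cref{A c: push}, factoring $p_\XX: \XX \times_S \YY \to \XX$ through the intermediate stack $\XX \times_S Y$ via the left column of the cartesian diagram displayed there. Concretely, write $p_\XX = \text{pr}_\XX \circ (\id \times \pi_\YY)$, where
$$\id \times \pi_\YY : \XX \times_S \YY \to \XX \times_S Y$$
is a $\Delta$-gerbe and $\text{pr}_\XX : \XX \times_S Y \to \XX$ is the projection. Since each piece is a familiar sort of morphism, the task reduces to a successive application of \Cref{A p: decomp} and \Cref{A p: bands}.

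For the first step, note that $\text{pr}_\XX$ is obtained by pulling back $\pi_\XX : \XX \to X$ along $X \times_S Y \to X$, so the pulled-back gerbe is $\pi_\XX \times \id : \XX \times_S Y \to X \times_S Y$. By \Cref{A p: decomp}, $L\text{pr}_\XX^*$ respects the decompositions given in \Cref{A p: splitting}, so for $P \in \Dqch(\XX)$ the pullback $L\text{pr}_\XX^* P$ is a $\chi$-homogeneous object on $\XX \times_S Y$ with respect to its gerbe structure over $X \times_S Y$.

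For the second step, view $\id \times \pi_\YY$ as a morphism of gerbes over $X \times_S Y$ from the $\Delta \times \Delta$-gerbe $\XX \times_S \YY$ to the $\Delta$-gerbe $\XX \times_S Y$. The induced map on bands $\Delta \times \Delta \to \Delta$ is the projection onto the first factor, since an automorphism of an object of $\XX \times_S \YY$ over a point of $X \times_S Y$ is a pair $(a_1,a_2)$ of automorphisms (one from each factor) and the morphism $\id \times \pi_\YY$ remembers only $a_1$. Applying \Cref{A p: bands} with $\chi' = \chi$ and $\phi = \text{pr}_1$, we see that $L(\id \times \pi_\YY)^*$ sends $\chi$-homogeneous objects to $(\chi, 0)$-homogeneous objects. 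Composing the two steps yields that $Lp_\XX^* P$ is $(\chi, 0)$-homogeneous, as desired.

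The only subtlety in the argument is identifying the map induced on bands by $\id \times \pi_\YY$, but this is essentially tautological once one spells out the inertia groups on both sides. Thus no real obstacle arises: the corollary is a direct consequence of \Cref{A p: decomp} and \Cref{A p: bands} applied along the factorization above, precisely dual to the proof of \Cref{A c: push}.
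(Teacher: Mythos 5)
Your proposal is correct and is exactly the paper's argument: the paper's proof cites the same three-level cartesian diagram and applies \Cref{A p: decomp} to the bottom square and \Cref{A p: bands} to the top, which is precisely your factorization $p_\XX = \text{pr}_\XX \circ (\id \times \pi_\YY)$ with the band map identified as the first projection. Your write-up just spells out the details the paper leaves implicit.
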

\begin{proof}
    Apply \Cref{A p: decomp} and \Cref{A p: bands} to the cartesian square
    $$\begin{tikzcd}
{\XX\times_S \YY} \arrow[r, "\pi_\XX \times \text{id}"] \arrow[d, swap, "\text{id}\times \pi_\YY"] & {X \times_S \YY} \arrow[d, "\text{id}\times \pi_\YY"] \\
{\XX \times_S Y} \arrow[r, "\pi_\XX \times \text{id}"] \arrow[d, swap, "\text{pr}_\XX"] & {X \times_S Y} \arrow[d, "\text{pr}_{X}"] \\
{\XX} \arrow[r, "\pi_\XX"]           & {X}          
\end{tikzcd}.$$
\end{proof}

\begin{lem} \label{A l: duality 1}
   Let $\pi:\XX \to X$ be a $\Delta$-gerbe.   Then, $L\pi^*$ is left and right adjoint to $R\pi_*$.  In particular, if $X$ is locally noetherian, then the restriction of $\pi^\times=L\pi^*$ to $\Dbc(X)$ factors through $\Dbc(\XX)$.
\end{lem}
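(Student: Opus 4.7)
The first claim, that $L\pi^*$ is left adjoint to $R\pi_*$, holds by construction: $R\pi_*$ is defined as the right adjoint of $L\pi^*$ in the setup of the section. For the right adjoint assertion, the plan is to exploit the character decomposition from \Cref{A p: splitting}. Let $A$ denote the character group of $\Delta$; that proposition identifies $\Dq(\XX)$ with the product category $\prod_{\chi \in A} \Dqch(\XX)$. By \Cref{A l : push}, $R\pi_*$ vanishes on every summand with non-trivial character, so it factors through the projection $p_0:\Dq(\XX) \to \text{D}_{\text{qc},0}(\XX)$. Viewing $X$ itself as a trivial gerbe over $X$ banded by the trivial group, and $\pi$ as a morphism of gerbes over $X$ in the sense of \Cref{A p: bands}, that proposition identifies $L\pi^*$ with an equivalence $\Dq(X) \xrightarrow{\sim} \text{D}_{\text{qc},0}(\XX)$ whose quasi-inverse is the restriction of $R\pi_*$.

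Consequently, under this decomposition, $R\pi_*$ is the composite of the projection $p_0$ with the equivalence $(L\pi^*)^{-1}$ onto $\text{D}_{\text{qc},0}(\XX)$, while $L\pi^*$ is the composite of that equivalence with the inclusion $\iota_0$ of the trivial summand into the product. In any product of additive categories, the inclusion of a factor is both the left and the right adjoint of the projection onto it: direct inspection yields $\Hom_{\Dq(\XX)}(\iota_0 x, (y_\chi)_\chi) = \Hom(x, y_0)$ and $\Hom_{\Dq(\XX)}((y_\chi)_\chi, \iota_0 x) = \Hom(y_0, x)$, because morphisms in a product are computed termwise and $\Hom(-,0) = \Hom(0,-) = 0$ in additive categories. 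Composing this two-sided adjunction with the equivalence shows that $L\pi^*$ is simultaneously a left and a right adjoint of $R\pi_*$, and by uniqueness of right adjoints, $\pi^\times = L\pi^*$.

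For the final assertion, suppose $X$ is locally noetherian. A $\Delta$-gerbe is a smooth morphism, so $L\pi^* = \pi^*$ is exact. It preserves coherence because coherence can be checked after pulling back to a smooth atlas and flat pullback preserves coherence on locally noetherian schemes. Exactness then ensures that $\pi^*$ restricts to a functor $\Dbc(X) \to \Dbc(\XX)$, completing the proof. The one conceptually delicate point is the application of \Cref{A p: bands} to $\pi:\XX \to X$, since one must recognize $X$ as the trivial gerbe over itself banded by the trivial group in order to see that $L\pi^*$ pulls the trivial character on $\{1\}$ back to the trivial character on $\Delta$; once this identification is in place, the remainder of the argument is a formal manipulation of the product decomposition.
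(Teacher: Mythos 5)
Your proof is correct and takes essentially the same approach as the paper's: both rest on the character decomposition of \Cref{A p: splitting}, the identification of $L\pi^*$ as an equivalence onto the degree-zero summand with $R\pi_*$ factoring through the projection onto it, and the fact that inclusion and projection of a direct summand are two-sided adjoints. The paper packages this as a formal chain of Hom-isomorphisms using $p_0 \dashv i_0$ and uniqueness of adjoints rather than explicitly invoking \Cref{A p: bands} and \Cref{A l : push}, but the content is the same.
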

\begin{proof}
    It is clear that $L\pi^*$ is left adjoint to $R\pi_*$.  To prove that it's also right adjoint, we first establish some notation.  Let $i_0:\text{D}_{\text{qc},0}(\XX) \to \Dq(\XX)$ and $p_0:\Dq(\XX)\to\text{D}_{\text{qc},0}(\XX)$ denote the inclusion and projection via the decomposition of \Cref{A p: splitting}.  Since $L\pi^*$ factors through $\D_{\text{qc},0}(\XX)$, we have a natural isomorphism $i_0p_0L\pi^* \simeq L\pi^*$.  For any $\FF\in \Dq(\XX)$ and any $\GG \in \Dq(\XX)$, we have isomorphisms
$$\Hom_{\Dq(\XX)}(\FF,R\pi_*i_0p_0\GG) \simeq \Hom_{\text{D}_{\text{qc},0}(\XX)}(p_0L\pi^*\FF,p_0\GG)$$
$$\simeq \Hom_{\Dq(\XX)}(L\pi^*\FF,i_0p_0\GG)\simeq \Hom_{\Dq(\XX)}(L\pi^*\FF,\GG),$$
where the first isomorphism follow from the adjunctions $L\pi^* \vdash R\pi_*$ and $p_0 \dashv i_0$, the second follows from applying the fully faithful functor $i_0$ and using the isomorphism $i_0p_0L\pi^*\simeq L\pi^*$, and the third follows from noting that $L\pi^*$ factors through $\text{D}_{\text{qc},0}(\XX)$ and using the decomposition in \Cref{A p: splitting}.  The formation of the above isomorphisms are natural in $\FF$ and $\GG$, so $R\pi_*i_0p_0$ is right adjoint to $\pi^*$ and is therefore isomorphic to $R\pi_*$.  Thus we have natural isomorphisms
$$\Hom_{\Dq(\XX)}(R\pi_*\FF,\GG)\simeq \Hom_{\Dq(\XX)}(R\pi_*i_0p_0\FF,\GG)\simeq \Hom_{\text{D}_{\text{qc},0}(\XX)}(p_0\FF,p_0L\pi^*\GG)$$
$$\simeq \Hom_{\Dq(\XX)}(\FF,i_0p_0L\pi^*\GG)\simeq \Hom_{\Dq(\XX)}(\FF,L\pi^*\GG),$$
so that $L\pi^*$ is right adjoint to $R\pi_*$.  It's clear that $L\pi^*$ preserves coherence.  $\pi$ is flat, so $L\pi^*$ preserves boundedness.
\end{proof}

\begin{cor}\label{A c: dualizing complex} Let $f:X \to S$ be a concentrated morphism of algebraic stacks, and let $\pi:\XX \to X$ be a $\Delta$-gerbe.  Then the dualizing complex of $\XX \to S$ is the pullback of $f^\times\mathcal{O}_S$.    
\end{cor}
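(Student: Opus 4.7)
The plan is to combine \Cref{A l: concentrated} with \Cref{A l: duality 1} and the uniqueness of right adjoints. The dualizing complex of $\XX \to S$ is, by definition, $(f \circ \pi)^\times \mathcal{O}_S$, so the statement amounts to producing a natural isomorphism $(f \circ \pi)^\times \mathcal{O}_S \simeq L\pi^* f^\times \mathcal{O}_S$.

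First I would verify that $(f \circ \pi)^\times$ exists. By \Cref{A l: concentrated}, $\pi$ is concentrated; combined with the hypothesis that $f$ is concentrated, and the fact that concentrated morphisms are stable under composition (see \cite[Lemma 2.5(3)]{HR17}), we get that $f \circ \pi$ is concentrated. Hence $(f \circ \pi)^\times$ exists by \cite[Theorem 4.14]{HR17}.

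Next I would identify $(f \circ \pi)^\times$ with $\pi^\times \circ f^\times$. Since $R(f \circ \pi)_* \simeq Rf_* \circ R\pi_*$, passing to right adjoints gives a canonical isomorphism of functors $(f\circ \pi)^\times \simeq \pi^\times \circ f^\times$. Now apply \Cref{A l: duality 1}, which identifies $\pi^\times$ with $L\pi^*$; evaluating at $\mathcal{O}_S$ yields the desired isomorphism
\[
(f\circ \pi)^\times \mathcal{O}_S \simeq \pi^\times f^\times \mathcal{O}_S \simeq L\pi^* f^\times \mathcal{O}_S.
\]
There is no real obstacle here; the only mildly subtle input is the identification $\pi^\times \simeq L\pi^*$ already established in \Cref{A l: duality 1}, so this corollary is essentially a formal consequence of that lemma together with the adjoint functor formalism.
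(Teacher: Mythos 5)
Your proposal is correct and is exactly the argument the paper intends: the paper's proof is literally ``Immediate,'' relying on \Cref{A l: duality 1} and the standard identification $(f\circ\pi)^\times \simeq \pi^\times \circ f^\times$ via uniqueness of right adjoints. Your write-up simply makes the implicit steps explicit.
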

\begin{proof}
    Immediate.
\end{proof}

\begin{prop}\label{A p: duality}
    Let $\Delta$ be a diagonalizable group, let $X$ be a noetherian and concentrated algebraic stack, and let $\pi:\XX \to X$ be a $\Delta$-gerbe.  Let $f:Y \to X$ be a proper and tame morphism of algebraic stacks of finite tor-dimension, and let $g:\YY \to \XX$ denote the pullback of $f$ along $\pi$.  Then the restriction of $g^\times:\Dq(\XX) \to \Dq(\YY)$ to $\Dbc(\XX)$ factors through $\Dbc(\YY)$, and for any character $\chi$ of $\Delta$, the restriction of $g^\times$ to $\Dbcch(\XX)$ factors through $\Dbcch(\YY)$.
\end{prop}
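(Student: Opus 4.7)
The plan is to reduce to the case of a trivial $\Delta$-gerbe via flat descent, and then use a projection formula together with tor-independent base change to express $g^\times$ on a $\chi$-twisted object in terms of the classical $f^\times$ applied to an object of $\Dbc(X)$.

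First I would deduce the first assertion from the second: any $\mathcal{F}\in\Dbc(\XX)$ has only finitely many coherent cohomology sheaves, each decomposing into finitely many $\chi$-homogeneous summands by \Cref{A p: qcoh}, so $\mathcal{F}$ lies in a finite direct sum of the subcategories $\Dbcch(\XX)$ and it suffices to prove the twisted statement. Next I would argue that the twisted statement may be checked smooth-locally on $X$. Let $U\to X$ be a smooth surjective morphism trivializing the gerbe $\XX$, and write $u:\XX_U\to\XX$, $v:\YY_U\to\YY$, and $g_U:\YY_U\to\XX_U$ for the induced morphisms. The square defining $g_U$ is tor-independent since $u$ is flat, so tor-independent base change for $(-)^\times$ for concentrated morphisms yields a natural isomorphism $Lv^*g^\times\simeq g_U^\times Lu^*$. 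Since $Lu^*$ preserves $\Dbcch$ and $Lv^*$ reflects boundedness, coherence, and $\chi$-twistedness by faithfully flat descent, it suffices to treat $g_U$; I may therefore assume $\XX\simeq X\times B\Delta$, so that $\pi$ admits a section.

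In this split case, let $\mathcal{L}_\chi$ denote the line bundle on $\XX$ pulled back from the one-dimensional representation of $\Delta$ associated to $\chi$. By \Cref{A l:tensor}, tensoring with $\mathcal{L}_\chi$ is an equivalence $\text{D}_{\text{qc},0}(\XX)\xrightarrow{\sim}\Dqch(\XX)$, while \Cref{A l: duality 1} gives that $L\pi^*:\Dq(X)\xrightarrow{\sim}\text{D}_{\text{qc},0}(\XX)$ is an equivalence with quasi-inverse $R\pi_*$. Hence any $\mathcal{F}\in\Dbcch(\XX)$ may be written as $\mathcal{F}\simeq L\pi^*\mathcal{G}\otimes^L\mathcal{L}_\chi$ for $\mathcal{G}:=R\pi_*(\mathcal{F}\otimes\mathcal{L}_\chi^{-1})$, and $\mathcal{G}\in\Dbc(X)$ by \Cref{A l: pushforward along a gerbe preserves coherence}. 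The projection formula for $g^\times$ with respect to the perfect complex $\mathcal{L}_\chi$---a formal consequence of the projection formula for $Rg_*$, which holds since $g$ is concentrated---combined with tor-independent base change along the flat map $\pi$ yields
\[
g^\times\mathcal{F}\;\simeq\;g^\times(L\pi^*\mathcal{G})\otimes^L Lg^*\mathcal{L}_\chi\;\simeq\;L\pi_Y^*\,f^\times\mathcal{G}\otimes^L Lg^*\mathcal{L}_\chi.
\]

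To finish, I would invoke Grothendieck duality for $f$: since $f$ is proper, tame, and of finite tor-dimension over the noetherian stack $X$, the functor $f^\times$ sends $\Dbc(X)$ to $\Dbc(Y)$. Hence $L\pi_Y^*f^\times\mathcal{G}\in\Dbc(\YY)$, and it is $0$-twisted as it is pulled back along $\pi_Y$; since $Lg^*\mathcal{L}_\chi$ is a $\chi$-twisted line bundle by \Cref{A p: decomp}, the tensor product lies in $\Dbcch(\YY)$ by \Cref{A l:tensor}. The step I expect to require the most care is verifying the projection formula and tor-independent base change for $g^\times$ in the stacky setting; both follow from the analogous properties of $Rg_*$ for concentrated morphisms by adjunction, but the hypotheses---perfectness of $\mathcal{L}_\chi$, flatness of $\pi$, and tor-independence of the square---must be checked explicitly.
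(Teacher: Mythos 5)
Your argument is essentially correct, but it is a much longer road than the one the paper takes, and the two routes are worth contrasting. The paper's proof is two citations: for the first assertion, observe that $g$ is itself proper, tame, and of finite tor-dimension (these are all stable under base change along $\pi$) over the noetherian, concentrated stack $\XX$, so \cite[Theorem 3.1(ii)]{HP24} applies \emph{directly to $g$} and gives preservation of $\Dbc$; for the second assertion, the compatibility with the character decomposition is exactly \Cref{A p: decomp}, which is purely formal --- $Rg_*$ respects the decomposition of \Cref{A p: splitting}, hence so does its right adjoint $g^\times$ by uniqueness of adjoints. Your proof instead descends everything to the coarse morphism $f$ and reassembles, which forces you to establish flat base change for $(-)^\times$ (both along the trivializing cover and along $\pi$) and a projection formula for $g^\times$ against the perfect complex $\mathcal{L}_\chi$; these do hold in the relevant generality (the objects lie in $\Dpq$ and the base changes are flat, and the needed structural results are in \cite{HP24} and \cite{HR17}), but you correctly flag them as the delicate steps, and they are entirely avoidable. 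Two smaller points: the reduction to the split gerbe requires an fppf cover rather than a smooth one in general, since a $\Delta$-gerbe for non-smooth diagonalizable $\Delta$ (e.g.\ $\mu_p$ in characteristic $p$) need not admit sections smooth-locally; and your opening reduction of the untwisted statement to the twisted one is fine but unnecessary, since the untwisted statement is the one that comes for free from Grothendieck duality for $g$. What your approach buys is an explicit formula $g^\times\mathcal{F}\simeq L\pi_Y^*f^\times\mathcal{G}\otimes^L Lg^*\mathcal{L}_\chi$ expressing the twisted duality functor in terms of the classical one, which is more information than the proposition asserts; what it costs is the full apparatus of base change for twisted inverse images.
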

\begin{proof}
        The first part is a follows from \cite[Theorem 3.1(ii)]{HP24}.  The second part follows from \Cref{A p: decomp}.
    \end{proof}
\begin{cor} \label{A c: duality}
    Let $S$ be a noetherian, concentrated algebraic stack, let $X$ be a noetherian algebraic stack which is concentrated over $S$, and let $Y$ be an algebraic stack which is proper, tame and of finite tor-dimension over $S$.  Let $\Delta$ be a diagonalizable group, let $\XX \to X$ and $\YY \to Y$ be $\Delta$-gerbes, and let $\chi$ be a character.  If we denote by $p_\XX:\XX \times_S \YY \to \XX$ the projection, then the restriction of $p_\XX^\times:\Dq(\XX) \to \Dq(\XX\times_S\YY)$ to $\Dbcch(\XX)$ factors through $\text{D}^b_{\text{coh},(\chi,0)}(\XX\times_S\YY)$.  Moreover, $p_{\XX}^\times(-)\simeq (p_{\XX}^\times\mathcal{O}_\XX)\otimes Lp_{\XX}^*(-)$.
\end{cor}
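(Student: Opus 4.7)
The plan is to factor $p_\XX$ and apply the preceding results to each factor. Write $p_\XX = g \circ \rho$ with
\[
\XX \times_S \YY \xrightarrow{\rho} \XX \times_S Y \xrightarrow{g} \XX,
\]
where $\rho = \text{id}_\XX \times \pi_\YY$ is the pullback of the $\Delta$-gerbe $\pi_\YY : \YY \to Y$, and $g$ is the pullback of $Y \to S$ along $\XX \to S$. Since properness, tameness, and finite tor-dimension are preserved by base change, $X \times_S Y \to X$ is proper, tame, and of finite tor-dimension, and $g$ is its pullback along the $\Delta$-gerbe $\pi_\XX : \XX \to X$. Thus \Cref{A p: duality} applies and shows that $g^\times$ restricts to a functor $\Dbcch(\XX) \to \Dbcch(\XX \times_S Y)$, where $\chi$ is viewed as a character of the band of the gerbe $\XX \times_S Y \to X \times_S Y$.

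Next I would analyze $\rho$ directly. Since $\rho$ is itself a $\Delta$-gerbe, \Cref{A l: duality 1} gives $\rho^\times = L\rho^*$. Viewing $\XX \times_S \YY$ as a $\Delta \times \Delta$-gerbe over $X \times_S Y$, a direct inspection of inertia shows that the band map induced by $\rho$ is the first projection $\Delta \times \Delta \to \Delta$, so the pullback of a character $\chi$ of $\Delta$ along this map is $(\chi, 0)$. By \Cref{A p: bands}, $L\rho^*$ therefore sends $\chi$-homogeneous objects on $\XX \times_S Y$ to $(\chi, 0)$-homogeneous objects on $\XX \times_S \YY$. Composing with the previous step gives that $p_\XX^\times = L\rho^* \circ g^\times$ carries $\Dbcch(\XX)$ into $\text{D}^b_{\text{coh}, (\chi, 0)}(\XX \times_S \YY)$, establishing the first assertion.

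For the projection formula, the key input is its analogue for $g$: since $g$ is proper of finite tor-dimension, standard Grothendieck--Serre duality (for algebraic stacks, in the Hall--Rydh framework underlying \Cref{A p: duality}) gives a natural isomorphism $g^\times F \simeq g^\times \mathcal{O}_\XX \otimes^L Lg^* F$ for every $F \in \Dq(\XX)$. Applying the symmetric monoidal functor $L\rho^* = \rho^\times$ and using $L(g\rho)^* = L\rho^* Lg^*$ then yields
\[
p_\XX^\times F \;\simeq\; L\rho^*(g^\times \mathcal{O}_\XX) \otimes^L L\rho^*(Lg^* F) \;\simeq\; p_\XX^\times \mathcal{O}_\XX \otimes^L Lp_\XX^* F,
\]
as desired.

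The main technical obstacle is pinning down a clean reference for the projection formula for $g^\times$ at this level of generality, namely tame, non-representable proper morphisms of algebraic stacks of finite tor-dimension. Once this is in hand, the rest is a formal synthesis of \Cref{A p: duality}, \Cref{A l: duality 1}, and \Cref{A p: bands}.
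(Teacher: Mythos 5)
Your proposal is correct and follows essentially the same route as the paper: the same factorization of $p_\XX$ through $\XX \times_S Y$, with \Cref{A p: duality} applied to the projection, \Cref{A l: duality 1} and \Cref{A p: bands} applied to the gerbe $\mathrm{id}\times\pi_\YY$, and the projection formula for the representable factor. The reference you were missing for that last step is exactly what the paper cites, namely \cite[Theorem 3.1(3)]{HP24}.
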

\begin{proof}
     We have a cartesian square
$$\begin{tikzcd}
{\XX\times_S \YY} \arrow[r, "\pi_\XX \times \text{id}"] \arrow[d, swap, "\text{id}\times \pi_\YY"] & {X \times_S \YY} \arrow[d, "\text{id}\times \pi_\YY"] \\
{\XX \times_S Y} \arrow[r, "\pi_\XX \times \text{id}"] \arrow[d, swap, "\text{pr}_\XX"] & {X \times_S Y} \arrow[d, "\text{pr}_{X}"] \\
{\XX} \arrow[r, "\pi_\XX"]           & {X}          
\end{tikzcd}$$
The result follows from functoriality, \Cref{A p: duality}, \Cref{A l: duality 1}, \Cref{A p: bands}, and \cite[Theorem 3.1(3)]{HP24}.
\end{proof}
\subsection{Twisted Fourier-Mukai Transforms}  Throughout this section, we fix a diagonalizable group $\Delta$, a cartesian square of algebraic stacks
$$\begin{tikzcd}
              & {X \times_S Y} \arrow[ld, swap, "p"] \arrow[rd, "q"] &               \\
{X} \arrow[rd, swap, "f"] &                          & {Y} \arrow[ld, "g"] \\
              & {S}                       &              
\end{tikzcd},$$
$\Delta$-gerbes $\pi_\XX:\XX \to X$ and $\pi_\YY:\YY \to Y$, and characters $\chi$ and $\psi$ of $\Delta$.  Let $p_\XX:\XX \times_S\YY \to \XX$ and $q_\YY:\XX \times_S\YY \to \YY$ denote the projections. 

Recall that for any object $K \in  \Dq(\XX \times_S \YY)$, one can define the \textit{Fourier-Mukai transform} with kernel $K$
$$\Phi_{K}:=Rq_{*}(K\otimes^L Lp^*(-)):\Dq(\XX) \to \Dq(\YY).$$
If $h:Z \to S$ is another morphism, $\pi_\ZZ:\ZZ \to Z$ is a $\Delta$-gerbe, and $L \in \Dq(\YY\times_S \ZZ)$, let $\pi_{12}:\XX \times_S \YY \times_S \ZZ \to \XX \times_S \YY$, and similarly for $\pi_{13}$, $\pi_{23}$, be the projections.  Let $r:\YY \times_S \ZZ \to \YY$ and $s:\YY \times_S \ZZ \to \ZZ$ be the projections.  Then we have the following:
\begin{lem}\cite[Section 5]{HP24}
    If $r$ and $q$ are tor-independent and $f,g,h$ are concentrated, then $\Phi_{L} \circ \Phi_K$ is a Fourier-Mukai transform with kernel
    $$R\pi_{13,*}(L\pi_{23}^* L \otimes^L L\pi^*_{12}K).$$
\end{lem}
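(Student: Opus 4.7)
The plan is to rewrite $\Phi_L \circ \Phi_K$ step-by-step using the projection formula, tor-independent base change, and functoriality of pushforward, arriving at $\Phi_M$ with $M = R\pi_{13,*}(L\pi_{23}^*L \otimes^L L\pi_{12}^*K)$. This is the standard argument used in the untwisted, scheme-theoretic case (see, e.g., \cite[Proposition 5.10]{Huy06}), adapted to the present setting via the six-functor formalism for concentrated morphisms of algebraic stacks developed in \cite{HR17}. The twisted $(\chi,\psi)$-structure plays no direct role, since everything in sight respects the decomposition of \Cref{A p: splitting} by \Cref{A p: decomp}, \Cref{A p: bands}, and \Cref{A l:tensor}.

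First I would unpack the composition as
$$\Phi_L \circ \Phi_K(E) \;=\; Rs_*\bigl(L \otimes^L Lr^* Rq_{\YY,*}(K \otimes^L Lp_\XX^* E)\bigr).$$
The first key step is to apply tor-independent base change to the cartesian square
$$\begin{tikzcd}
\XX \times_S \YY \times_S \ZZ \arrow[r, "\pi_{23}"] \arrow[d, swap, "\pi_{12}"] & \YY \times_S \ZZ \arrow[d, "r"] \\
\XX \times_S \YY \arrow[r, swap, "q_\YY"] & \YY.
\end{tikzcd}$$
The hypothesized tor-independence of $r$ and $q$, together with the concentrated hypothesis on $g$, yields the base change isomorphism $Lr^* Rq_{\YY,*} \simeq R\pi_{23,*} L\pi_{12}^*$ from the formalism of \cite{HR17}. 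Combining this with the identity $L\pi_{12}^* Lp_\XX^* E \simeq L\pi_1^* E$ (where $\pi_i$ denotes the projection from the triple product to the $i$-th factor) transports the inner expression onto the triple fiber product.

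Next, the projection formula lets me pull $L$ inside $R\pi_{23,*}$, producing
$$Rs_* R\pi_{23,*}\bigl(L\pi_{23}^* L \otimes^L L\pi_{12}^* K \otimes^L L\pi_1^* E\bigr).$$
I would then combine the two outer pushforwards via functoriality and factor the composite $s \circ \pi_{23} = q'_\ZZ \circ \pi_{13}$, where $q'_\ZZ \colon \XX \times_S \ZZ \to \ZZ$ denotes the projection. A second application of the projection formula along $\pi_{13}$—now using $L\pi_1^* E \simeq L\pi_{13}^* L p'^{\,*}_\XX E$ with $p'_\XX \colon \XX \times_S \ZZ \to \XX$ the projection—then extracts $Lp'^{\,*}_\XX E$ from under the pushforward, yielding
$$Rq'_{\ZZ,*}\bigl(M \otimes^L Lp'^{\,*}_\XX E\bigr) \;=\; \Phi_M(E).$$

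The main obstacle is bookkeeping: one must carefully set up several cartesian squares and apply base change and the projection formula twice each, verifying at every stage that the relevant tor-independence and concentratedness hypotheses are preserved under base change. None of the individual steps is novel in the stacky context thanks to \cite{HR17}, and the twist adds no extra complication because the six functors all preserve the $\chi$-grading by \Cref{A p: decomp} and \Cref{A l:tensor}; in particular, the kernel $M$ automatically lies in $\text{D}_{\text{qc},(\chi,\psi)}(\XX \times_S \ZZ)$ as expected.
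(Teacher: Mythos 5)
Your argument is correct and is exactly the standard convolution-of-kernels computation (tor-independent base change across the square with vertices $\XX\times_S\YY\times_S\ZZ$, $\YY\times_S\ZZ$, $\XX\times_S\YY$, $\YY$, followed by two applications of the projection formula and functoriality of pushforward) that the paper does not spell out but delegates to \cite[Section 5]{HP24}. The only quibble is bookkeeping: the concentratedness needed for the base change isomorphism $Lr^*Rq_{\YY,*}\simeq R\pi_{23,*}L\pi_{12}^*$ is that of $q_\YY$, which is a base change of $\XX\to S$ and hence comes from $f$ together with \Cref{A l: concentrated}, not from $g$ (whose concentratedness is instead what makes $\pi_{13}$ concentrated for the second projection formula); since $f$, $g$, $h$ are all assumed concentrated, this is harmless.
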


Fourier-Mukai transforms interact well with the decompositions of \Cref{A p: splitting}:
\begin{lem} \label{A l: twitsed fm}
    If $K \in \text{D}_{\qc, (\chi^{-1},\psi)}(\XX \times_S \YY)$, then the restriction of $\Phi_K$ to $\text{D}_{\qc,\chi^{-1}}(\XX)$ factors through $\text{D}_{\qc,\psi}(\YY)$.
\end{lem}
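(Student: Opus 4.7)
The plan is a character chase through the three operations defining $\Phi_K = Rq_{\YY,*}(K \otimes^L Lp_\XX^*(-))$, using the three results on homogeneous components developed above. No new geometric input is needed; the entire proof reduces to invoking the decomposition results in sequence and confirming that the characters combine as they should.

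Starting from $F \in \text{D}_{\qc,\chi^{-1}}(\XX)$, I would first apply \Cref{A c : pullback} to conclude that $Lp_\XX^* F \in \text{D}_{\qc,(\chi^{-1},0)}(\XX \times_S \YY)$, since pulling back along $p_\XX$ forces trivial homogeneity on the inertia coming from $\YY$ while preserving the $\chi^{-1}$-twist on the $\XX$-factor. Second, \Cref{A l:tensor} tells us that the derived tensor product $K \otimes^L Lp_\XX^* F$ is homogeneous for the product of the characters of its two factors; the first-factor character of $K$ has been arranged precisely so that its product with the $\chi^{-1}$ coming from the pullback is trivial, placing $K \otimes^L Lp_\XX^* F$ in $\text{D}_{\qc,(0,\psi)}(\XX \times_S \YY)$. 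Third, the $Rq_{\YY,*}$ analogue of \Cref{A c: push} (which follows by symmetry from the very same argument) guarantees that this pushforward preserves $\psi$-homogeneity on the second factor and collapses the (now trivial) first-factor character, so that $\Phi_K(F) \in \text{D}_{\qc,\psi}(\YY)$, as claimed.

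The argument is genuinely just a three-step chase, and the only real ``obstacle'' is careful bookkeeping of the characters of $\Delta \times \Delta$ under the conventions of \Cref{A p: splitting} and \Cref{A l:tensor}; once those are pinned down, the lemma is a formal consequence of the preceding machinery. In particular, there is no need to re-examine the construction of $\Phi_K$ at the level of cohomology sheaves or to appeal to anything not already in the appendix.
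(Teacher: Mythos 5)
Your proof is the paper's proof: the author's entire argument is ``Apply \Cref{A c : pullback}, \Cref{A l:tensor}, and \Cref{A c: push},'' and your three-step character chase is exactly that, with the (correct) understanding that the $Rq_{\YY,*}$ version of \Cref{A c: push} follows by symmetry. One bookkeeping point deserves a flag, though: with the source category as literally stated, $F$ is $\chi^{-1}$-twisted, so $K\otimes^L Lp_\XX^*F$ is $(\chi^{-1}\cdot\chi^{-1},\psi)=(\chi^{-2},\psi)$-homogeneous, not $(0,\psi)$-homogeneous as you assert --- the first-factor characters do not cancel. The stated conclusion still holds, but only via the second clause of \Cref{A c: push}: the pushforward of a $(\chi^{-2},\psi)$-homogeneous object is $\psi$-homogeneous because it is zero unless $\chi^2$ is trivial. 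The computation you actually perform is the right one for the reading that is surely intended (and that is used in \Cref{A c :twisted adjoints} and \Cref{A t: criterion}), namely that $\Phi_K$ carries $\text{D}_{\qc,\chi}(\XX)$ to $\text{D}_{\qc,\psi}(\YY)$; in a referee's report I would point out the apparent typo in the source category rather than fault your argument.
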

\begin{proof}
    Apply \Cref{A c : pullback}, \Cref{A l:tensor}, and \Cref{A c: push}
\end{proof}
(Twisted) Fourier-Mukai transforms also often have adjoints:
\begin{prop}\label{A p: adjoints}Let $K \in \Dq(\XX \times_S \YY)$. \begin{enumerate}
    \item Suppose that $Y$ is noetherian and concentrated.  If $f$ is proper and tame of finite tor-dimension, then $\Phi_K:\Dq(\XX) \to \Dq(\YY)$ admits a right adjoint $H$.  If $K$ is perfect, then 
    $$H\simeq \Phi_{K^\vee \otimes^L q_\YY^\times \mathcal{O}_\YY}:\Dq(\YY) \to \Dq(\XX).$$
    \item Assume that $X$ is noetherian and concentrated and $K$ is perfect.  If $g$ is proper, tame, and flat with geometrically Gorenstein fibers of dimension $d$, then 
    $$G:=\Phi_{K^\vee \otimes^L p_\XX^\times\mathcal{O}_\XX}:\Dq(\YY) \to \Dq(\XX)$$
    is left adjoint to $\Phi_K$.
    \end{enumerate}
\end{prop}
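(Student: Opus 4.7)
The plan is to decompose $\Phi_K$ as $Rq_{\YY,*} \circ (K \otimes^L (-)) \circ Lp_\XX^*$ and then obtain the adjoints by composing the adjoints of each factor in reverse order. A preliminary step common to both parts is to verify that $p_\XX$ and $q_\YY$ are concentrated morphisms, so that their twisted inverse images $p_\XX^\times$ and $q_\YY^\times$ exist by \cite[Theorem 4.14]{HR17}. This is straightforward: each factors as a $\Delta$-gerbe (concentrated by \Cref{A l: concentrated}) composed with a base change of $f$ or $g$, and proper tame morphisms are concentrated.

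For part (1), combining the adjunctions $Lp_\XX^* \dashv Rp_{\XX,*}$, $K \otimes^L(-) \dashv R\mathcal{H}om(K, -)$, and $Rq_{\YY,*} \dashv q_\YY^\times$ produces a right adjoint $H(G) = Rp_{\XX,*} R\mathcal{H}om(K, q_\YY^\times G)$ for $\Phi_K$. When $K$ is perfect, $R\mathcal{H}om(K, -) \simeq K^\vee \otimes^L(-)$. Then invoking the projection formula $q_\YY^\times G \simeq q_\YY^\times \mathcal{O}_\YY \otimes^L Lq_\YY^* G$ furnished by \cite[Theorem 3.1(3)]{HP24} (applicable since $q_\YY$ is the composition of a $\Delta$-gerbe and a base change of the proper tame $f$ of finite tor-dimension) rewrites $H$ in the desired Fourier--Mukai form $\Phi_{K^\vee \otimes^L q_\YY^\times \mathcal{O}_\YY}$.

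For part (2), the strategy is dualized: I compose left adjoints. Here $Rq_{\YY,*}$ has left adjoint $Lq_\YY^*$, and perfectness of $K$ ensures that $K \otimes^L(-)$ admits the left adjoint $K^\vee \otimes^L(-)$ (since for perfect $K$ the endofunctor $(-)\otimes^L K^\vee$ is simultaneously the left and the right adjoint of $(-)\otimes^L K$). The missing ingredient is a left adjoint for $Lp_\XX^*$. Under the Gorenstein hypothesis on $g$, the base change $p_\XX$ is proper, tame, and flat with Gorenstein fibers of dimension $d$, so by \Cref{A c: dualizing complex} the relative dualizing complex $p_\XX^\times \mathcal{O}_\XX$ is a shifted line bundle. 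Grothendieck duality then gives $p_\XX^\times (-) \simeq p_\XX^\times \mathcal{O}_\XX \otimes^L Lp_\XX^* (-)$, and the invertibility of $p_\XX^\times \mathcal{O}_\XX$ allows us to read off $Rp_{\XX,*}\bigl((-) \otimes^L p_\XX^\times \mathcal{O}_\XX\bigr)$ as the left adjoint of $Lp_\XX^*$. Composing the three left adjoints yields $G(E) = Rp_{\XX,*}\bigl((K^\vee \otimes^L p_\XX^\times \mathcal{O}_\XX) \otimes^L Lq_\YY^* E\bigr) = \Phi_{K^\vee \otimes^L p_\XX^\times \mathcal{O}_\XX}(E)$.

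The main subtlety is not conceptual but organizational: ensuring that the base change, projection formula, and Grothendieck duality results of \cite{HR17} and \cite{HP24} apply in the gerbe setting with the correct hypotheses. I expect this to reduce, via the decomposition results of \Cref{A p: decomp}, \Cref{A p: bands}, and \Cref{A c: dualizing complex}, to verifications on the underlying algebraic stacks. The delicate point is checking that each required property (concentrated, noetherian, proper, tame, finite tor-dimension, Gorenstein fibers) is preserved under forming the relevant gerbes and fiber products.
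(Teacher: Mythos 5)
Your argument is correct and follows essentially the same route as the paper: both proofs reduce to the twisted duality formulas $q_\YY^\times(-)\simeq Lq_\YY^*(-)\otimes^L q_\YY^\times\mathcal{O}_\YY$ and $p_\XX^\times(-)\simeq Lp_\XX^*(-)\otimes^L p_\XX^\times\mathcal{O}_\XX$, obtained by factoring the projections through the gerbe map and the projection of underlying spaces, with invertibility of $p_\XX^\times\mathcal{O}_\XX$ supplying the left adjoint in part (2). The only nitpick is that the invertibility of $p_\XX^\times\mathcal{O}_\XX$ should be sourced from \cite[Corollary 3.2(2)]{HP24} applied to the Gorenstein hypothesis on $g$ rather than from \Cref{A c: dualizing complex}, which only identifies the dualizing complex of a gerbe as a pullback.
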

\begin{proof}
We begin with (1). It suffices to show that $q_\YY^\times(-) \simeq Lq_\YY^*(-)\otimes q_{\YY}^\times\mathcal{O}_\YY$.  We have a commuting triangle
$$
\begin{tikzcd}
{\XX \times_S \YY} \arrow[r, "\text{id} \times \pi_\YY"] \arrow[d, swap, "q_\YY"] & {X \times_S \YY} \arrow[ld, "\text{pr}_\YY"] \\
{\YY}                     &              
\end{tikzcd}$$
By \cite[Theorem 3.2(3)]{HP24} and our assumptions on $f$, $\text{pr}_\YY^\times$ takes the desired form.  By \Cref{A l: duality 1}, $(\text{id} \times \pi_\YY)$ does too, so we are done.

For (2), we begin by repeating the above argument and showing that $p_\XX^\times(-) \simeq Lp_\XX^*(-)\otimes p_\XX^\times\mathcal{O}_\XX$.  It remains to show that $p^\times_\XX\mathcal{O}_\XX$ is an invertible object of $\Dq(\XX \times_S \YY)$.  Forming a commuting triangle as above, we reduce to proving that $\text{pr}_{\XX}^\times \mathcal{O}_\XX \in \Dq(\XX \times_S Y)$ is invertible.  We finish by applying \cite[Corollary 3.2(2)]{HP24}.
\end{proof}
\begin{cor}\label{A c :twisted adjoints}
    Let $K \in \text{D}_{\text{qc},(\chi^{-1},\psi)}(\XX \times_S \YY)$.
    \begin{enumerate}
        \item Suppose that $Y$ is noetherian, that $f$ is proper and tame of finite tor-dimension, and that $K$ is perfect.  Let $H$ be as in \Cref{A p: adjoints}.  Then the restriction of $H$ to $\text{D}_{\text{qc},\psi}(\YY)$ factors through $\Dqch(\XX)$.
        \item Suppose that $X$ is noetherian and concentrated, that $g$ is proper, tame, and flat with geometrically Gorenstein fibers, and that $K$ is perfect.  Let $G$ be as in \Cref{A p: adjoints}.  Then the restrction of $G$ to $\text{D}_{\text{qc},\psi}(\YY)$ factors through $\Dqch(\XX)$.
        \end{enumerate}
\end{cor}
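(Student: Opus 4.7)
The plan is to identify $H$ and $G$ explicitly as Fourier-Mukai transforms (this is precisely \Cref{A p: adjoints}), track the characters of their kernels through the operations $R\mathcal{H}om$ and $(-)^\times$, and then invoke \Cref{A l: twitsed fm} applied in the direction $\YY \to \XX$. Throughout we view $\XX \times_S \YY$ as a $\Delta \times \Delta$-gerbe over $X \times_S Y$, so that \Cref{A l:tensor} can be applied componentwise in each factor.

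For part (1), \Cref{A p: adjoints}(1) gives
\[
H \;\simeq\; \Phi_{K^\vee \otimes^L q_\YY^\times \mathcal{O}_\YY}.
\]
Since $K \in \text{D}_{\text{qc},(\chi^{-1},\psi)}(\XX \times_S \YY)$, applying \Cref{A l:tensor} in each factor shows that $K^\vee = R\mathcal{H}om(K,\mathcal{O})$ is $(\chi,\psi^{-1})$-twisted. Because $f$ is proper, tame, and of finite tor-dimension, the version of \Cref{A c: duality} with the roles of $X$ and $Y$ interchanged shows that $q_\YY^\times \mathcal{O}_\YY$ is $(0,0)$-twisted. Hence, by \Cref{A l:tensor} once more, the kernel $K^\vee \otimes^L q_\YY^\times \mathcal{O}_\YY$ is $(\chi,\psi^{-1})$-twisted, and viewing it on $\YY \times_S \XX$ via the obvious swap of factors it becomes $(\psi^{-1},\chi)$-twisted. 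Applying \Cref{A l: twitsed fm} then shows that $H$ sends $\text{D}_{\text{qc},\psi}(\YY)$ to $\text{D}_{\text{qc},\chi}(\XX)$, as desired.

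Part (2) is identical in form. By \Cref{A p: adjoints}(2), $G \simeq \Phi_{K^\vee \otimes^L p_\XX^\times \mathcal{O}_\XX}$. The only change is that $p_\XX^\times \mathcal{O}_\XX$ is $(0,0)$-twisted by a direct application of \Cref{A c: duality} (using that $g$ is proper, tame, and flat with Gorenstein fibers, as required by \Cref{A p: adjoints}(2)). The remainder of the argument is verbatim.

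I do not expect a real obstacle here. The proof is essentially a careful bookkeeping of characters, and the one place where care is needed is remembering which copy of $\Delta$ in $\Delta \times \Delta$ corresponds to $\XX$ versus $\YY$, and consequently how the twist of a Fourier-Mukai kernel relates (via the inverse of the first coordinate) to the twist of the input.
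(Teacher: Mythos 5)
Your proposal is correct and follows essentially the same route as the paper: the paper's proof is the one-line observation that, by \Cref{A l:tensor}, the kernels of $H$ and $G$ are $(\psi^{-1},\chi)$-twisted, so \Cref{A l: twitsed fm} applies. Your version simply spells out the character bookkeeping (the twist of $K^\vee$, the triviality of the twist of $q_\YY^\times\mathcal{O}_\YY$ and $p_\XX^\times\mathcal{O}_\XX$, and the swap of factors) that the paper leaves implicit.
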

\begin{proof}
    By \Cref{A l:tensor}, the corresponding Fourier-Mukai kernels are $(\psi^{-1},\chi)$-twisted, so may apply \Cref{A l: twitsed fm}.
\end{proof}
Under certain reasonable hypotheses, coherence is also preserved under (twisted) Fourier-Mukai transforms.

\begin{defn}
    Let $f:W \to T$ be a flat and locally of finite presentation morphism between quasi-compact algebraic stacks.  We say that $P \in \Dq(W)$ is $T$-\textit{perfect} if it is pseudocoherent on $W$ and if $P\otimes_{\mathcal{O}_W}^L f^*M$ is bounded for any $M \in \Dbq(T)$.
\end{defn}

\begin{rem}
    Our notion of our relatively perfect differs from the one used in \cite{HP24} \cite{Stacks}; this notion is stronger and easier for us to work with.  We feel that it is beyond the scope of this appedix to compare them.  The analogous notion was defined for schemes in \cite{ALS23}, where it was shown that under certain circumstances the two notions coincide \cite[Corollary 4.2]{ALS23}.
\end{rem}

\begin{lem}\label{A l: perfection}Let $f:W \to T$ be a flat and locally of finite presentation morphism between quasi-compact algebraic stacks.
    \begin{enumerate}
        \item If $T$ is noetherian, then $P \in \Dq(W)$ is pseudocoherent if and only if $P \in \Dmc(W)$.
        \item if $T=\Spec(k)$ is the spectrum of a field and $W$ is quasi-compact, then $P \in \Dq(W)$ is $T$-perfect if and only if $P \in \Dbc(W)$.
        \item if $P \in \Dq(W)$ is perfect, then $P$ is $T$-perfect.
    \end{enumerate}
\end{lem}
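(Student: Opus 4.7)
I would handle the three parts in the natural order, using (1) to derive (2); part (3) is a separate observation from the definitions. For (1), I reduce to the case of a noetherian scheme by choosing a smooth surjection $\phi:\tilde{W}\to W$ from a noetherian affine scheme. Both properties---being pseudocoherent and lying in $\Dmc$---are smooth-local on $W$ and are preserved under smooth pullback. On a noetherian scheme this equivalence is standard: pseudocoherence forces the cohomology sheaves to be finitely presented, hence coherent over a noetherian base, while conversely any bounded-above complex with coherent cohomology is Zariski-locally represented by a bounded-above complex of finite free modules.

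For (2), I first note that $W$ is noetherian, since quasi-compact and locally of finite presentation over a field gives finite type over a field. Over $T=\Spec(k)$, any $M \in \Dbq(T)$ splits as $\bigoplus_i H^i(M)[-i]$ with each $H^i(M)$ a $k$-vector space, so
\[
Lf^*M \;\simeq\; \bigoplus_i \bigl(H^i(M)\otimes_k \mathcal{O}_W\bigr)[-i]
\]
is a bounded complex with free terms. Tensoring gives $P\otimes^L_{\mathcal{O}_W} Lf^*M \simeq \bigoplus_i (P\otimes_k H^i(M))[-i]$, a bounded direct sum of shifts of copies of $P$; this is bounded if and only if $P$ itself is bounded. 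Combined with (1), $T$-perfectness is equivalent to being pseudocoherent and bounded, i.e., to lying in $\Dbc(W)$.

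For (3), a perfect complex is by definition locally quasi-isomorphic to a bounded complex of finite free modules, so it is pseudocoherent and locally of finite tor-amplitude. Because $W$ is quasi-compact, the local tor-amplitude bound is uniform, so for any $M \in \Dbq(T)$ the tensor product $P\otimes^L Lf^*M$ has cohomological amplitude bounded by the sum of the tor-amplitude of $P$ and the amplitude of $Lf^*M$, hence is bounded. The main obstacle in the whole proof is the stack-theoretic bookkeeping in (1), but since pseudocoherence is designed to be smooth-local, the reduction to the noetherian scheme case is clean.
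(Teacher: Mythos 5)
Your proposal is correct, and it matches the paper's (terse) treatment: the paper simply cites \cite[Section 2]{HP24} for part (1) and declares (2) and (3) obvious, so your write-up just supplies the routine details — the smooth-local reduction for pseudocoherence, the splitting of complexes over a field for (2), and the uniform tor-amplitude bound for (3). Nothing in your argument diverges from or goes beyond what the paper intends.
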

\begin{proof}
    (1) is in \cite[Section 2]{HP24}.  (2) and (3) are obvious.
\end{proof}
\begin{lem}\label{A l: coherence}
    Suppose that $X$ and $Y$ are noetherian, $q$ is proper, flat, and tame $p$ is flat and of finite type.  If $K \in \Dq(\XX \times_S \YY)$ is $\XX$-perfect, then the restriction of $\Phi_K:\Dq(\XX) \to \Dq(\YY)$ to $\Dbc(\XX)$ factors through $\Dbc(\YY)$.  If, in addition, $K \in \text{D}_{\text{qc},(\chi^{-1},\psi)}$, then the restriction of $\Phi_K$ to $\Dbcch(\XX)$ factors through $\text{D}^b_{\text{coh},\psi}(\YY)$.
\end{lem}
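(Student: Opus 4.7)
The plan is to trace $F \in \Dbc(\XX)$ through the three operations making up $\Phi_K(F) = Rq_{\YY,*}(K \otimes^L Lp_\XX^* F)$, verifying that coherence and boundedness are preserved at each stage, and in the twisted case tracking the $\Delta \times \Delta$-weights via the results of the previous subsection. First, since $p$ is flat of finite type and $X$ is noetherian, the base change $p_\XX$ is flat of finite type and $\XX \times_S \YY$ is noetherian. In particular $Lp_\XX^* F = p_\XX^* F \in \Dbc(\XX \times_S \YY)$ for any $F \in \Dbc(\XX)$, and if $F$ is $\chi$-twisted then \Cref{A c : pullback} gives $Lp_\XX^* F \in \text{D}^b_{\text{qc},(\chi,0)}(\XX \times_S \YY)$.

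For the tensor step, pseudocoherence of $K$ combined with \Cref{A l: perfection}(1) places $K$ in $\Dmc(\XX \times_S \YY)$. Working étale-locally, $K$ may be represented by a bounded-above complex of finite free modules; tensoring with the bounded complex of coherent modules $p_\XX^* F$ then yields a bounded-above complex of coherent modules, so $K \otimes^L Lp_\XX^* F \in \Dmc(\XX \times_S \YY)$. The $\XX$-perfection of $K$ applied to $F \in \Dbq(\XX)$ provides the matching boundedness, so in fact $K \otimes^L Lp_\XX^* F \in \Dbc(\XX \times_S \YY)$. Under the twisted hypothesis, \Cref{A l:tensor} shows that this tensor product is $(\chi^{-1}\cdot \chi, \psi \cdot 0) = (0,\psi)$-twisted.

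Finally, $q_\YY$ is proper, flat, and tame as a base change of $q$, so $Rq_{\YY,*}$ has finite cohomological dimension on quasi-coherent sheaves and preserves $\Dbc$ between noetherian stacks; hence $\Phi_K(F) \in \Dbc(\YY)$. In the twisted setting, the symmetric analogue of \Cref{A c: push} (with the roles of the two projections swapped) shows that the pushforward of a $(0,\psi)$-twisted complex is $\psi$-twisted, placing $\Phi_K(F)$ in $\text{D}^b_{\text{coh},\psi}(\YY)$. The main point to watch is the second step: pseudocoherence alone does not obviously survive derived tensor product with an arbitrary bounded coherent complex, and one must combine the local finite-free presentation of $K$ with the $\XX$-perfection hypothesis to upgrade the resulting $\Dmc$-membership to $\Dbc$.
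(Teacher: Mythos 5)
Your proof is correct and follows essentially the same route as the paper's, which likewise reduces everything to the observation that $\XX$-perfection of $K$ is exactly what guarantees boundedness of $K\otimes^L Lp_\XX^*(-)$, the remaining steps being routine applications of \Cref{A c : pullback}, \Cref{A l:tensor}, and \Cref{A c: push}. One small imprecision: $q_\YY$ is not itself proper, nor is it a base change of $q$ (the gerbe $\XX\times_S\YY\to X\times_S\YY$ fails to be proper when $\Delta=\Gm$); to see that $Rq_{\YY,*}$ preserves $\Dbc$ you should factor it through $X\times_S\YY$ and combine \Cref{A l: pushforward along a gerbe preserves coherence} with properness of the base change of $q$ along $\pi_\YY$.
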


\begin{proof}
    By \Cref{A l: twitsed fm}, it suffices to check that being a bounded complex of coherent sheaves is preserved under such a Fourier-Mukai transform.  The only nontrivial part is to check that $K\otimes^L p_\XX^*(-)$ preserves boundedness.  This follows from the fact that $K$ is $\XX$-perfect.
    \end{proof}

\begin{prop}\label{A p: coherence and adjoints}
    Let $K \in \Dq(\XX \times_S \YY)$ be perfect.  
    \begin{enumerate}
        \item  Suppose that $f$ is proper tame and flat.  Let $H:\Dq(\YY) \to \Dq(\XX)$ be the right adjoint to $\Phi_K$ as in \Cref{A p: adjoints}.  Then the restriction of $H$ to $\Dbc(\YY)$ factors through $\Dbc(\XX)$.
        \item Suppose that $g$ is proper, tame, and flat with geometrically Gorenstein fibers.  Let $G:\Dq(\YY ) \to \Dq(\XX)$ be the left adjoint to $\Phi_K$ as in \Cref{A p: adjoints}.  Then the restriction of $G$ to $\Dbc(\YY)$ factors through $\Dbcch(\XX)$. 
    \end{enumerate}
\end{prop}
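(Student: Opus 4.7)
The plan is to use \Cref{A p: adjoints} to identify each adjoint as a Fourier-Mukai transform with an explicit kernel, and then to verify that the resulting kernel is relatively perfect so that \Cref{A l: coherence} applies in the appropriate direction.

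Part (2) is the cleaner case. By \Cref{A p: adjoints}(2), $G \simeq \Phi_{K^\vee \otimes^L p_\XX^\times \mathcal{O}_\XX}$, viewed as a Fourier-Mukai transform from $\YY$ to $\XX$. The hypothesis that $g$ is proper, tame, and flat with geometrically Gorenstein fibers of dimension $d$ propagates to $p_\XX$ by base change, and Grothendieck duality for such morphisms shows that $p_\XX^\times \mathcal{O}_\XX$ is a shift of an invertible sheaf and in particular perfect. Hence $K^\vee \otimes^L p_\XX^\times \mathcal{O}_\XX$ is perfect, and a fortiori $\YY$-perfect by \Cref{A l: perfection}(3). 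The push direction $p_\XX$ is proper, tame, and flat (base change of $g$), while the pull direction $q_\YY$ is flat and of finite type (base change of $f$), so the hypotheses of \Cref{A l: coherence} are satisfied, and $G$ sends $\Dbc(\YY)$ into $\Dbc(\XX)$.

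For (1), by \Cref{A p: adjoints}(1) we have $H \simeq \Phi_{K^\vee \otimes^L q_\YY^\times \mathcal{O}_\YY}$. Without a Gorenstein assumption the complex $q_\YY^\times \mathcal{O}_\YY$ is merely in $\Dbc(\XX \times_S \YY)$ rather than perfect. Using the factorization $q_\YY = \text{pr}_\YY \circ (\pi_\XX \times \text{id})$ together with the compatibility of $(-)^\times$ with gerbes (\Cref{A l: duality 1}, \Cref{A c: dualizing complex}) and with base change (\Cref{A p: duality}), one identifies $q_\YY^\times \mathcal{O}_\YY$ as the pullback through the gerbes of $f^\times \mathcal{O}_S \in \Dbc(X)$. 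The flatness of $f$ then bounds the tor-amplitude of this complex relative to $\YY$, so that $q_\YY^\times \mathcal{O}_\YY$ is $\YY$-perfect; tensoring with the perfect complex $K^\vee$ preserves this, and one applies \Cref{A l: coherence} to conclude as in (2).

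The main obstacle is part (1): establishing $\YY$-perfectness of $q_\YY^\times \mathcal{O}_\YY$ in the absence of a Gorenstein hypothesis requires a careful tor-amplitude analysis along the factorization of $q_\YY$ as a $\Delta$-gerbe followed by a base change of $f$. This is the one step that goes meaningfully beyond a direct citation of the appendix's earlier results; everything else is a routine invocation of \Cref{A p: adjoints} and \Cref{A l: coherence}.
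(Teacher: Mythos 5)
Your part (2) is essentially the paper's argument (the paper proves (1) and declares (2) similar): the Gorenstein hypothesis makes $p_\XX^\times\mathcal{O}_\XX$ invertible, so the kernel of $G$ is perfect and everything is routine.

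For part (1) you take a genuinely different, and strictly harder, route. The paper never separates $q_\YY^\times\mathcal{O}_\YY$ from the pullback: writing $H(F)\simeq Rp_{\XX,*}(K^\vee\otimes^L q_\YY^\times\mathcal{O}_\YY\otimes^L Lq_\YY^*F)$ and recombining the last two factors as $q_\YY^\times F$, it reduces to the statement that $q_\YY^\times$ preserves $\Dbc$, which is already available as \Cref{A p: duality} (i.e.\ \cite[Theorem 3.1(2,3)]{HP24}) applied through the triangle $q_\YY=\text{pr}_\YY\circ(\pi_\XX\times\text{id})$ together with \Cref{A l: duality 1}; no relative perfectness of the dualizing complex is ever invoked. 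Your route instead requires showing that $q_\YY^\times\mathcal{O}_\YY$ is $\YY$-perfect so that \Cref{A l: coherence} applies, and the justification you offer is not yet a proof: flatness of $f$ does not by itself bound the tor-amplitude of $f^\times\mathcal{O}_S$ over $S$ --- the derived tensor product of the bounded coherent complex $q_\YY^\times\mathcal{O}_\YY$ with $Lq_\YY^*M$ could a priori be unbounded. What you need is the theorem that the relative dualizing complex of a proper, flat, finitely presented (tame) morphism is relatively perfect over the base; this is true, but it is a substantive result in coherent duality (proved in \cite{Stacks} for schemes and implicitly packaged into \cite[Theorem 3.1]{HP24} here), not a formal consequence of flatness. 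Since the appendix already contains \Cref{A p: duality}, the economical fix is to drop the relative-perfectness detour and argue as the paper does. One smaller point: when you invoke \Cref{A l: coherence} you use that $q_\YY$ is ``flat and of finite type (base change of $f$)'' in part (2) and that $p_\XX$ is well behaved in part (1), but neither part of the proposition hypothesizes anything about the other leg of the square; these are standing assumptions that hold in every application but should be flagged.
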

\begin{proof}
We only prove (1); (2) is proven similarly.  The only nontrivial part is showing that tensoring with $q_\YY^\times\mathcal{O}_\YY$ sends $\Dbc(\YY)$ to $\Dbc(\XX\times_S \YY)$.  This follows from considering the commuting triangle
$$
\begin{tikzcd}
{\XX \times_S \YY} \arrow[r, "\text{id} \times \pi_\YY"] \arrow[d, swap, "q_\YY"] & {X \times_S \YY} \arrow[ld, "\text{pr}_\YY"] \\
{\YY}                     &              
\end{tikzcd}$$

and using functoriality, \cite[Theorem 3.1(2,3)]{HP24}, and \Cref{A l: duality 1}.
\end{proof}
\begin{cor}\label{A c: coherent twisted adjoints}
    Let $K \in \text{D}_{\text{qc},(\chi^{-1},\psi)}(\XX \times_S \YY)$ be perfect.
    \begin{enumerate}
        \item Suppose that $f$ is proper tame and flat.  Let $H:\Dq(\YY) \to \Dq(\XX)$ be the right adjoint to $\Phi_K$ as in \Cref{A p: adjoints}.  Then the restriction of $H$ to $\text{D}^b_{\text{coh},\psi}(\YY)$ factors through $\Dbcch(\XX)$.
        \item Suppose that $g$ is proper, tame, and flat with geometrically Gorenstein fibers.  Let $G:\Dq(\YY ) \to \Dq(\XX)$ be the left adjoint to $\Phi_K$ as in \Cref{A p: adjoints}.  Then the restriction of $G$ to $\text{D}^b_{\text{coh},\psi}(\YY)$ factors through $\Dbcch(\XX)$.
    \end{enumerate}
\end{cor}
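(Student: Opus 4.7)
The plan is to simply combine the two preceding results, \Cref{A c :twisted adjoints} and \Cref{A p: coherence and adjoints}. The key observation is that $\Dbcch(\XX)$ is the intersection of $\Dbc(\XX)$ and $\Dqch(\XX)$ inside $\Dq(\XX)$: an object of $\Dq(\XX)$ has bounded coherent $\chi$-twisted cohomology precisely when it has bounded coherent cohomology and $\chi$-twisted cohomology. So it suffices to verify these two containments separately for the object $H(F)$ (respectively $G(F)$).

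For part (1), let $F \in \text{D}^b_{\text{coh},\psi}(\YY)$. Under the hypothesis that $f$ is proper, tame, and flat and that $K$ is perfect (noting that flat implies finite tor-dimension), \Cref{A c :twisted adjoints}(1) applied to $F$ viewed as an object of $\text{D}_{\text{qc},\psi}(\YY)$ yields $H(F) \in \Dqch(\XX)$, while \Cref{A p: coherence and adjoints}(1) applied to $F$ viewed as an object of $\Dbc(\YY)$ yields $H(F) \in \Dbc(\XX)$. Intersecting gives $H(F) \in \Dbcch(\XX)$. Part (2) is proved in exactly the same way, using \Cref{A c :twisted adjoints}(2) and \Cref{A p: coherence and adjoints}(2) in place of their part (1) versions; the hypothesis that $g$ is proper, tame, and flat with geometrically Gorenstein fibers is precisely what both cited results require.

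There is essentially no obstacle here: the statement is purely a packaging of the two prior corollaries. The only subtlety worth double-checking is that in each case the two cited results are genuinely referring to the same adjoint functor, so that the two containments really are containments for the same object. This is guaranteed by the explicit formulas from \Cref{A p: adjoints}, namely $H \simeq \Phi_{K^\vee \otimes^L q_\YY^\times \mathcal{O}_\YY}$ and $G \simeq \Phi_{K^\vee \otimes^L p_\XX^\times \mathcal{O}_\XX}$, which are precisely the Fourier-Mukai functors whose restrictions are studied in both \Cref{A c :twisted adjoints} and \Cref{A p: coherence and adjoints}.
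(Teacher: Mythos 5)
Your proof is correct and follows exactly the paper's argument: the paper also deduces the corollary immediately by combining \Cref{A c :twisted adjoints} and \Cref{A p: coherence and adjoints}. Your extra remarks on intersecting the two subcategories and on the two cited results referring to the same explicit Fourier--Mukai kernel are sound but just make explicit what the paper leaves implicit.
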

\begin{proof}
This follows immediately from \Cref{A c :twisted adjoints} and \Cref{A p: coherence and adjoints}.   
\end{proof}
Finally, we are ready to start dealing with twisted derived equivalence criteria.  We begin with two lemmas that enable us to reduce to working over an algebraically closed field.
\begin{lem} \label{A l: base change}
    Suppose that
    \begin{enumerate}
        \item $S$ is noetherian and concentrated.
        \item $f$ and $g$ are proper, flat, and tame.
        \item $K \in \text{D}^b_{\text{coh},(-\chi,\psi)}(\XX \times_S\ \YY)$ is perfect.
    \end{enumerate}
If $\Phi_{K_{s}}:\text{D}^b_{\text{coh},\chi}(\XX_{s}) \to \text{D}^b_{\text{coh},\psi}(\YY_{s})$ is fully faithful (resp.~ an equivalence) for every closed point $s$ of $S$, then $\Phi_K:\Dbcch(\XX) \to \text{D}^b_{\text{coh},\psi}(\YY)$ is fully faithful (resp.~ an equivalence).
\end{lem}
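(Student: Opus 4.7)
The plan is to use the adjoints of $\Phi_K$ together with a fiberwise rigidity argument on the Fourier--Mukai kernels. By \Cref{A c: coherent twisted adjoints}, since $g$ is proper, flat, and tame and $K$ is perfect, $\Phi_K$ admits a right adjoint $H = \Phi_{K^\vee \otimes^L q_\YY^\times \mathcal{O}_\YY}$, and analogously a left adjoint $G$, each preserving the twisted coherent subcategories. Full faithfulness of $\Phi_K$ is equivalent to the unit $\eta: \id \to H \circ \Phi_K$ being an isomorphism of endofunctors of $\Dbcch(\XX)$, while $\Phi_K$ being an equivalence additionally requires the counit $\Phi_K \circ G \to \id$ to be an isomorphism; both statements can be encoded at the level of Fourier--Mukai kernels.

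Concretely, $H \circ \Phi_K$ is itself a Fourier--Mukai transform with perfect, $(-\chi, \chi)$-twisted kernel $L \in \Dbc(\XX \times_S \XX)$, obtained by convolution of $K$ with $K^\vee \otimes^L q_\YY^\times \mathcal{O}_\YY$ over $\YY$. The unit $\eta$ corresponds to a canonical morphism $\iota : M \to L$, where $M$ is the Fourier--Mukai kernel representing the identity on $\Dbcch(\XX)$ (supported along the relative diagonal $\XX \hookrightarrow \XX \times_S \XX$ with its natural $(-\chi,\chi)$-twist). Showing that $\Phi_K$ is fully faithful therefore reduces to showing that the cone $C$ of $\iota$, a bounded coherent complex on $\XX \times_S \XX$, is zero.

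To verify $C = 0$, I would restrict to fibers of $\XX \times_S \XX \to S$ over closed points of the noetherian base $S$. Flatness and tameness of $f$ and $g$, together with \Cref{A l: concentrated}, supply the tor-independence and base change needed for the formation of $L$, $M$, and $\iota$ to commute with pullback along $\Spec(k(s)) \to S$; consequently $\iota|_{\XX_s \times \XX_s}$ is the unit morphism for the fiberwise transform $\Phi_{K_s}$, which is an isomorphism by hypothesis, so $C|_{\XX_s \times \XX_s} \simeq 0$ for every closed $s \in S$. Since $\XX \times_S \XX \to S$ is proper and $C$ is a bounded complex of coherent sheaves, a Nakayama-style vanishing concludes: take the top nonvanishing cohomology sheaf $\mathcal{H}^i(C)$ and let $Z$ be its support; if $Z \neq \emptyset$, then $\pi(Z) \subset S$ is a nonempty closed subset, hence contains a closed point $s$, and $\mathcal{H}^i(L i_s^* C) = i_s^* \mathcal{H}^i(C)$ is nonzero on $\XX_s \times \XX_s$, contradicting $C|_{\XX_s \times \XX_s} \simeq 0$. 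Iterating downward in degree forces $C = 0$, giving full faithfulness. The equivalence case is handled identically, applied to the cone of the counit $\Phi_K \circ G \to \id$, whose fiberwise vanishing is guaranteed by the fiberwise equivalence hypothesis.

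The hard part will be making the base change rigorous in the twisted setting. Specifically, one must: verify tor-independence of the projections from the triple fiber product $\XX \times_S \YY \times_S \XX$ that appears in the convolution formula for $L$; check that the formation of the relative dualizing object $q_\YY^\times \mathcal{O}_\YY$ commutes with restriction to a closed fiber (this follows from flatness and tameness of $g$ via \Cref{A p: duality} and \Cref{A c: duality}); and identify the restriction of the twisted diagonal kernel $M$ on $\XX \times_S \XX$ with the twisted diagonal kernel on $\XX_s \times_{k(s)} \XX_s$. Once these compatibilities are established, the Nakayama step and the deduction for the counit are formal.
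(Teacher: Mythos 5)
Your argument is correct and is essentially the proof the paper intends: the paper's own proof is just a citation to \cite[Proposition 5.7]{HP24}, which is exactly this kernel-level argument — convolve to get the kernel of $H\circ\Phi_K$, compare it with the (twisted) diagonal kernel, restrict the cone to closed fibers where it vanishes by hypothesis, and conclude by a Nakayama argument over the noetherian base. One small correction: for the equivalence case the natural transformation to check is the counit $\Phi_K\circ H\to\mathrm{id}$ of the adjunction $\Phi_K\dashv H$ (a morphism of kernels on $\YY\times_S\YY$), or equivalently the unit $\mathrm{id}\to\Phi_K\circ G$ of $G\dashv\Phi_K$; there is no canonical map $\Phi_K\circ G\to\mathrm{id}$.
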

\begin{proof}
    Argue as in \cite[Proposition 5.7]{HP24}.
\end{proof}

\begin{lem}\label{A l: base change for fields}
    Let $f,g$ and $K$ be as in \Cref{A l: base change}.  Suppose that $S=\text{Spec}(k)$, where $k$ is a field.  Then $\Phi_{\EE}$ is full faithful (resp.~ an equivalence) if and only if $\Phi_{\EE_{\overline{k}}}$ is.
\end{lem}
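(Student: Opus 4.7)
The plan is to reduce both implications to statements about the unit of adjunction for $\Phi_\EE$ and its base change along $\Spec(\overline{k}) \to \Spec(k)$. By \Cref{A c: coherent twisted adjoints}, $\Phi_\EE$ admits a right adjoint $H$, itself a Fourier-Mukai transform with a perfect kernel. Using the base change compatibilities from \Cref{A p: decomp} and \Cref{A p: adjoints}, together with proper tame base change \cite[Theorem 3.1]{HP24}, the formation of $\Phi_\EE$, of $H$, and of the unit $\eta : \id \to H\Phi_\EE$ all commute with the flat base change $\Spec(\overline{k}) \to \Spec(k)$; in particular, $H_{\overline{k}}$ is the right adjoint of $\Phi_{\EE_{\overline{k}}}$ and $\eta_{\overline{k}}$ is its unit. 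Recall that $\Phi_\EE$ is fully faithful if and only if $\eta$ is a natural isomorphism, and an equivalence if and only if both $\eta$ and the counit $\varepsilon$ are natural isomorphisms.

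For the ``if'' direction, assume $\Phi_{\EE_{\overline{k}}}$ is fully faithful. Then $(\eta_F)_{\overline{k}} = \eta'_{F_{\overline{k}}}$ is an isomorphism for every $F \in \Dbcch(\XX)$, where $\eta'$ denotes the unit for $\Phi_{\EE_{\overline{k}}}$. The cone of $\eta_F$ is thus a bounded coherent complex on $\XX$ whose base change to $\XX_{\overline{k}}$ vanishes; by faithful flatness of $k \to \overline{k}$, this cone vanishes, making $\eta_F$ an isomorphism. Hence $\Phi_\EE$ is fully faithful. The equivalence case follows by applying the same faithfully flat descent argument to the counit, using the left adjoint of $\Phi_\EE$ from \Cref{A c: coherent twisted adjoints}.

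For the ``only if'' direction, assume $\Phi_\EE$ is fully faithful. Then $\eta'_{F_{\overline{k}}}$ is an isomorphism for every $F \in \Dbcch(\XX)$. To extend this to all of $\Dbcch(\XX_{\overline{k}})$, I would choose a classical generator $G \in \Dbcch(\XX)$, constructed from positive powers of an ample line bundle on $X$ together with a $[\XX]$-twisted summand (available since $X$ is smooth projective in the paper's applications). Its base change $G_{\overline{k}}$ is a classical generator of $\Dbcch(\XX_{\overline{k}})$, and since $\eta'$ is a natural transformation of exact triangulated functors that is an isomorphism on $G_{\overline{k}}$, it is an isomorphism on the entire thick triangulated subcategory it generates, namely $\Dbcch(\XX_{\overline{k}})$. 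Thus $\Phi_{\EE_{\overline{k}}}$ is fully faithful, and the equivalence case follows by applying the same argument to the counit using a classical generator of $\text{D}^b_{\text{coh},\psi}(\YY)$.

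The main obstacle is bookkeeping for the base change compatibilities of units and counits in the twisted, stacky setting; this reduces to combining the appendix's base change results for $Lg^*$, $Rg_*$, and $g^\times$ with the explicit descriptions of adjoint kernels in \Cref{A p: adjoints}. A secondary subtlety in the forward direction is the need for a classical generator that base changes to a classical generator, which is straightforward for $\Gm$-gerbes over smooth projective varieties but would require extra care in the full generality of \Cref{A l: base change}.
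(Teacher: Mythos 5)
Your proof is correct and follows essentially the same approach as the paper: reduce to the unit and counit of the adjunction being isomorphisms and check the vanishing of their cones after the faithfully flat base change $k \to \overline{k}$. The paper's own proof is a terse version of your ``if'' direction; your classical-generator argument for the ``only if'' direction supplies a detail the paper leaves implicit (objects over $\overline{k}$ need not descend to $k$, so one cannot test the unit only on base-changed objects), and is a worthwhile addition.
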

\begin{proof}
Let $\eta$ and $\epsilon$ denote the unit and counit of the adjunction $\Phi_{\EE} \dashv H$.  $\Phi_{\EE}$ is fully faithful (resp.~ an equivalence) if and only if for $F \in \text{D}^b_{\text{coh},\psi}(Y)$, $\eta(F)$ (resp.~ $\eta(F)$ and $\epsilon(F)$) are isomorphisms.  This is condition that the cone of $\eta(F)$ (resp.~ $\eta(F)$ and $\epsilon(F)$) be zero.  This can be checked after base change.
\end{proof}

We recall the following definition.
\begin{defn}
    Let $\mathcal{T}$ be a triangulated category. A collection $\Omega$ of objects of $\mathcal{T}$ is called a \textit{spanning class} if the following two conditions hold:
    \begin{enumerate}
        \item If $t \in \mathcal{T}$ and $\Hom(t,\omega[n])=0$ for all $\omega \in \Omega$ and $n \in \mathbb{Z}$, then $t\simeq 0$,
        \item If $t \in \mathcal{T}$ and $\Hom(\omega[n],t)=0$ for all $\omega \in \Omega$ and $n \in \mathbb{Z}$, then $t\simeq 0$.
    \end{enumerate}
\end{defn}
The following is a useful example of a spanning class
\begin{defn} \label{A d: twisted skyscraper sheaves}
    Let $X$ be an algebraic space locally of finite presentation over an algebraically closed field $k$, let $\pi:\XX \to X$ be a $\Delta$-gerbe, and let $x \in X(k)$ be a $k$-point.  Denote by $j_x:\XX_x \to \XX$ the inclusion of the fiber over $x$.  For each character $\chi$ of $\Delta$, let $k(x,\chi)$ denote the pushforward of the unique simple object of $\text{QCoh}_\chi(\XX_x)$.  We refer to $k(x,\chi)$ as the $\chi$\textit{-twisted skyscraper sheaf supported at} $x$.  If $\Delta=\Gm$ and $\chi=\text{id}$, we simply write $k(x)$.
\end{defn}

\begin{rem} Note that any trivialization $\XX_x \simeq B\Delta$ identifies the unique simple object of $\text{QCoh}_{\chi}(\XX)$ with the line bundle on $B\Delta$ corresponding to the character $\chi:\Delta \to \Gm$.    
\end{rem}

\begin{prop}
    Suppose that $X$ is an algebraic space of finite type over an algebraically closed field $k$, let $\pi:\XX \to X$ be a $\Delta$-gerbe, and let $\chi$ be a character of $\Delta$.  Then the set
    $$\Omega=\text{$\{k(x,\chi): x \in X$ is a closed point$\}$ }$$
    is a spanning class for $\Dbcch(\XX)$.
\end{prop}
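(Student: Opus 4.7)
The plan is to verify both spanning-class conditions by reducing, via adjunctions, the vanishing of $\Hom$-sets involving the twisted skyscraper $k(x,\chi)$ to the vanishing of derived restrictions of $F$ to the gerbe fibers $\XX_x := \pi^{-1}(x)$.

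Since $k$ is algebraically closed, each fiber $\XX_x$ is (non-canonically) isomorphic to $B\Delta$, and by \Cref{A p: splitting} the category $\Dbcch(\XX_x)$ is equivalent to $\Db(\text{Vect}_k)$ with $\mathcal{O}_\chi$ as a compact generator. Using the adjunctions $Lj_x^* \dashv j_{x,*} \dashv j_x^!$ (the last existing because $j_x$ is concentrated by \Cref{A l: concentrated}), I get natural isomorphisms
\begin{align*}
\Hom_\XX(F, k(x,\chi)[n]) &\simeq \Hom_{\XX_x}(Lj_x^* F, \mathcal{O}_\chi[n]), \\
\Hom_\XX(k(x,\chi)[n], F) &\simeq \Hom_{\XX_x}(\mathcal{O}_\chi[n], j_x^! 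F),
\end{align*}
which, in view of the structure of $\Dbcch(\XX_x)$, translate the two spanning conditions into: if $Lj_x^* F = 0$ (respectively $j_x^! F = 0$) for every closed point $x$, then $F = 0$.

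For the first condition, I would let $q$ be the largest integer with $\mathcal{H}^q(F) \neq 0$. The truncation triangle $\tau_{<q}F \to F \to \mathcal{H}^q(F)[-q]$, combined with the fact that $Lj_x^*$ is right $t$-exact, gives $\mathcal{H}^q(Lj_x^* F) \simeq j_x^* \mathcal{H}^q(F)$, which is nonzero for any closed point $x$ in the $X$-support of $\mathcal{H}^q(F)$ by Nakayama's lemma, yielding a contradiction.

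For the second condition, I would reduce to the first via Grothendieck--Serre duality. Since $X$ is of finite type over a field, it admits a dualizing complex $\omega^\bullet_X$, and by \Cref{A c: dualizing complex} the pullback $\omega^\bullet_\XX := L\pi^* \omega^\bullet_X$ is a dualizing complex for $\XX$. The resulting anti-autoequivalence $D := R\HHom(-,\omega^\bullet_\XX)$ carries $\Dbcch(\XX)$ to $\text{D}^b_{\text{coh},\chi^{-1}}(\XX)$ (by \Cref{A l:tensor}), satisfies $D \circ D \simeq \id$, and by Grothendieck duality for $j_x$ together with the splitting of $\Dbc(B\Delta)$ into isotypic components, identifies $D(k(x,\chi))$ with a finite direct sum of shifts of $k(x,\chi^{-1})$. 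Thus the vanishing of $\Hom(k(x,\chi)[n], F)$ for all $x, n$ translates to the vanishing of $\Hom(DF, k(x,\chi^{-1})[m])$ for all $x, m$, which by the first part forces $DF = 0$ and hence $F = 0$. The main technical point is the verification that $D$ is an anti-autoequivalence on the $\chi$-twisted bounded coherent derived category and computes as claimed on twisted skyscrapers; both follow from the results in this appendix combined with standard Grothendieck duality for the closed immersion $j_x$.
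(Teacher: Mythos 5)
Your argument is correct, but it takes a genuinely different route from the paper. The paper's proof is essentially a reduction to known results: it invokes the splitting of \Cref{A p: splitting} to isolate the $\chi$-isotypic component and then cites \cite[Proposition 6.14]{AHR23} and \cite[Corollary 7.17]{HP24}, where the spanning-class statement for skyscrapers at closed points is already established. You instead verify the two conditions directly: condition (1) by the adjunction $Lj_x^* \dashv j_{x,*}$, the identification of the $\chi$-twisted part of the fiber category with vector spaces, and Nakayama applied to the top cohomology sheaf (using that $X$ is Jacobson so the support of $\mathcal{H}^q(F)$ contains a closed point); condition (2) by Grothendieck--Serre duality, converting it into condition (1) for $DF$ in the $\chi^{-1}$-twisted category. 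Both steps are sound, and in fact $D(k(x,\chi))$ is a single shift of $k(x,\chi^{-1})$, computed via duality for the closed immersion $j_x$ and the fact that the dualizing complex of $\XX_x \simeq B\Delta$ in the untwisted component is a shift of the structure sheaf. What your approach buys is a self-contained argument that makes visible exactly where each hypothesis enters; what it costs is the overhead of setting up duality on the gerbe, including the claim that $R\HHom(-,\omega^\bullet_\XX)$ is an involutive anti-equivalence of $\Dbc(\XX)$ exchanging $\chi$- and $\chi^{-1}$-twisted parts --- true, and checkable étale-locally via the splitting, but not literally what \Cref{A c: dualizing complex} says (that corollary concerns $f^\times\mathcal{O}_S$, which for non-proper $X$ of finite type is not the dualizing complex; you should instead take any dualizing complex on $X$ and pull it back, verifying the dualizing property étale-locally on the trivialized gerbe). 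This is a citation mismatch rather than a gap, since the statement only assumes $X$ of finite type and such an $\omega^\bullet_X$ always exists.
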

\begin{proof}
    Apply \cite[Proposition 6.14]{AHR23}, \cite[Corollary 7.17]{HP24}, and \Cref{A p: splitting}.
\end{proof}
Finally, we prove our main theorem.
\begin{thm}\label{A t: criterion}
    Let $X$ be a smooth, proper algebraic spaces over an algebraically closed field $k$, and let $Y$ be a Gorenstein, proper algebraic space over $k$.  Let $\pi_\XX:\XX \to X$ and $\pi_\YY:\YY \to Y$ be $\Delta$-gerbes, and let $\chi$ and $\psi$ be characters for $\Delta$.  Suppose that $\Delta$ is smooth.  Let $\EE \in \text{D}^b_{\text{coh},(\chi^{-1},\psi)}(\XX \times \YY)$ be perfect, and let
    $$F=\Phi_\EE:\Dbcch(\XX) \to \text{D}^b_{\text{coh},\psi}(\YY).$$ $F$ is a fully faithful if and only if:
    \begin{enumerate}
        \item For each closed point $x \in X$, $F$ induces an isomorphism
        $$\Ext^i(k(x,\chi),k(x,\chi)) \simeq \Ext^i(F(k(x,\chi)),F(k(x,\chi)))$$
        for $i=0,1$
        \item For each pair of closed points $x_1,x_2$ of $X$, and each integer $i$,
        $$\Ext^i(F(k(x_1,\chi_1)),F(k(x_2,\chi_2)))=0$$
        unless $x_1=x_2$ and $0 \leq i \leq \text{dim}(X)$.
    \end{enumerate}
    If $Y$ is assumed to be smooth, then any $\EE \in \text{D}^b_{\text{coh},(\chi^{-1},\psi)}(\XX \times \YY)$ is perfect.
\end{thm}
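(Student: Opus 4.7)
The proof splits into two essentially independent parts: the full-faithfulness criterion, and the perfectness statement in the smooth case. I sketch an approach to each.

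For the perfectness claim, when $Y$ is smooth the product $X \times Y$ is a regular noetherian algebraic space, and since $\Delta$ is smooth the projection $\XX \times \YY \to X \times Y$ is a smooth $(\Delta\times\Delta)$-gerbe. The proof of \Cref{A l: same} then applies verbatim with character $(\chi^{-1},\psi)$: one trivializes the gerbe étale-locally, uses the twisting line bundle of \Cref{A l:tensor} to reduce to the trivial character, pulls back along the section, and invokes the standard identification $\Dbc = \Perf$ on the regular scheme $X \times Y$.

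For the forward direction, if $F$ is fully faithful then $F$ is an isomorphism on all $\Ext$ groups, giving (1) immediately. For (2), choose étale trivializations of $\pi_{\XX}$ near $x_1$ and $x_2$ identifying each $k(x_i,\chi)$ with an honest skyscraper twisted by a pullback of the character representation. The twists cancel in $\Ext^i(k(x_1,\chi),k(x_2,\chi))$, reducing it to the classical $\Ext^i_X(k(x_1),k(x_2))$, which vanishes for $x_1\neq x_2$ by disjoint support and for $x_1=x_2$, $i\notin[0,\dim X]$ by smoothness of $X$.

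The backward direction is the main content. I would first invoke \Cref{A c: coherent twisted adjoints} to produce a right adjoint $H \simeq \Phi_{\EE^\vee \otimes q_\YY^\times \mathcal{O}_\YY}$ to $F$, using that $f:X\to\Spec(k)$ is proper and tame and that $\EE$ is assumed perfect. Let $C$ denote the cone of the unit $\eta:\id \to HF$; the goal is $C\equiv 0$. By the spanning class of twisted skyscrapers it suffices to show $\Hom(k(x,\chi),C(k(y,\chi))[n]) = 0$ for every pair of closed points $x,y$ and every $n$. The triangle $k(y,\chi) \to HF(k(y,\chi)) \to C(k(y,\chi))$ and the adjunction identification $\Hom(k(x,\chi),HF(k(y,\chi))[i]) = \Ext^i(Fk(x,\chi),Fk(y,\chi))$ reduce this to showing that the canonical map $\Ext^i(k(x,\chi),k(y,\chi)) \to \Ext^i(Fk(x,\chi),Fk(y,\chi))$ is an isomorphism for all $x,y,i$. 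For $x\neq y$ both sides vanish (source by disjoint support, target by (2)); for $x=y$ the hypothesis (1) supplies the isomorphism at $i=0,1$, and both sides vanish for $i<0$ or $i>\dim X$ (source by smoothness of $X$, target by (2)).

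The remaining range $2\leq i\leq \dim X$ is the main obstacle. Here I would also use the left adjoint $G$ to $F$, also available from \Cref{A c: coherent twisted adjoints} since $Y$ is proper Gorenstein so that $g:Y\to\Spec(k)$ has geometrically Gorenstein fibers. Combining the symmetric cone argument on the counit $\epsilon: FG \to \id$ with the Serre symmetry $\Ext^i(Fk(x,\chi),Fk(x,\chi)) \cong \Ext^{\dim Y - i}(Fk(x,\chi),Fk(x,\chi)\otimes\omega_Y)^\vee$, coming from the dualizing complex of the Gorenstein proper $Y$, one propagates the established isomorphisms from the two ends of $[0,\dim X]$ through the intermediate degrees. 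The genuinely delicate point is tracking the character data through the duality chase; this rests on systematic use of \Cref{A l:tensor} and \Cref{A p: decomp} to ensure that the $(\chi^{-1},\psi)$-decomposition is preserved at every step.
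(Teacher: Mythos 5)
Your reduction to showing that $\Ext^i(k(x,\chi),k(y,\chi))\to\Ext^i(Fk(x,\chi),Fk(y,\chi))$ is bijective for all $x,y,i$ is the right skeleton (it is the two-adjoint spanning-class criterion, and both adjoints are indeed supplied by \Cref{A c: coherent twisted adjoints}), and your treatment of the forward direction, the perfectness claim, and the easy cases $x\neq y$, $i<0$, $i>\dim X$ is fine. But the step you yourself flag as ``the main obstacle'' --- the degrees $2\leq i\leq\dim X$ --- is a genuine gap, and the difficulty is not, as you suggest, ``tracking the character data'': it is that Serre duality cannot bridge the intermediate degrees. Even granting $\dim Y=\dim X=n$ and $Fk(x,\chi)\otimes\omega_Y\simeq Fk(x,\chi)$ (neither of which you establish --- $Y$ is only Gorenstein, and a priori the dimensions need not agree), duality only transports the isomorphisms at $i=0,1$ to information at $i=n,n-1$; it says nothing about $2\leq i\leq n-2$. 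There is no ``propagation'' mechanism here.

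The argument that actually closes this gap --- and which the paper obtains by deferring wholesale to the proof of Theorem A of \cite{HP24}, supplemented only by the observation that \Cref{A p: splitting} lets one ignore the other characters and that the local ring needed in their Corollary 7.18 is produced from an \'etale trivialization of $\XX$ --- is the Bondal--Orlov/Bridgeland analysis of the single object $Q=GF(k(x,\chi))$. Conditions (1) and (2), transported through the adjunction $\Hom(Q,k(y,\chi)[i])\simeq\Ext^i(Fk(x,\chi),Fk(y,\chi))$, force $Q$ to be a sheaf supported at $x$; condition (1) at $i=0$ forces it to be cyclic over the local ring, and condition (1) at $i=1$ forces the corresponding fat point to be reduced, so the counit $Q\to k(x,\chi)$ is an isomorphism. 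Once $GF(k(x,\chi))\simeq k(x,\chi)$, bijectivity of $F$ on \emph{all} $\Ext$-degrees between skyscrapers follows formally from adjunction in one stroke --- no degree-by-degree propagation is needed or possible. Your sketch contains no substitute for this local-algebra step, so as written the backward direction does not go through. (A secondary point: showing the cone of the unit vanishes on the spanning class does not by itself show $F$ is fully faithful on all objects; you need the counit $GF\omega\to\omega$ to be an isomorphism on the spanning class as well, which is exactly what the analysis of $Q$ provides.)
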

\begin{proof}
    Follow the proof of \cite[Theorem A]{HP24}.  To apply \cite[Corollary 7.18]{HP24}, use the decomposition of \Cref{A p: splitting} to ignore all other generalized closed points of $\XX$.  To produce $\Spec(A)$ in \cite[Corollary 7.18]{HP24}, choose a surjective étale morphism from a scheme $\tilde{X} \to X$ trivializing $\XX$, a lift of $x$ to $\tilde{X}$, and let $A$ be the local ring of $\tilde{X}$ at $x$. 
\end{proof}

\end{document}